\definecolor{orange}{rgb}{.75,.25,0}
\theoremstyle{plain}
\newtheorem{theorem}[equation]{Theorem}
\newtheorem{lemma}[equation]{Lemma}
\newtheorem{corollary}[equation]{Corollary}
\newtheorem{proposition}[equation]{Proposition}
\theoremstyle{definition}
\newtheorem{definition}[equation]{Definition}
\newtheorem{example}[equation]{Example}
\theoremstyle{remark}
\newtheorem{remark}[equation]{Remark}
\newcommand{\ga}{\gamma}
\numberwithin{equation}{section}
\newcommand{\RR}{{\mathbb{R}}}
\DeclareMathOperator\Max{\mathcal{M}} 
\def\Xint#1{\mathchoice
{\XXint\displaystyle\textstyle{#1}}%
{\XXint\textstyle\scriptstyle{#1}}%
{\XXint\scriptstyle\scriptscriptstyle{#1}}%
{\XXint\scriptscriptstyle%
\scriptscriptstyle{#1}}%
\!\int}
\def\XXint#1#2#3{{\setbox0=\hbox{$#1{#2#3}{%
\int}$ }
\vcenter{\hbox{$#2#3$ }}\kern-.6\wd0}}
\def\barint{\,\Xint -} 
\def\bariint{\barint_{} \kern-.4em \barint}
\def\bariiint{\bariint_{} \kern-.4em \barint}
\renewcommand{\iint}{\int_{}\kern-.34em \int} 
\renewcommand{\iiint}{\iint_{}\kern-.34em \int} 
\newcommand{\CC}{\mathcal{C}}
\newcommand{\ar}{\partial}
\newcommand{\la}{\lambda}
\newcommand{\La}{\Lambda}
\newcommand{\sem}{\setminus}
\newcommand{\De}{\Delta}
\newcommand{\dist}{\operatorname{dist}}
\newcommand{\ran}{\rangle}
\newcommand{\lan}{\langle}
\newcommand{\ti}{\widetilde}
\newcommand{\Ang}{\operatorname{Angle}}
\newcommand{\re}{\mathbb{R}}
\newcommand{\rn}{\mathbb{R}^n}
\newcommand{\ree}{\mathbb{R}^{n+1}}
\newcommand{\dd}{\mathbb{D}}
\newcommand{\si}{\sigma}
\newcommand{\F}{\mathcal{F}}
\newcommand{\B}{\mathcal{B}}
\newcommand{\sbf}{{\bf S}}
\newcommand{\G}{\mathcal{G}}
\def\H{\mathcal H}
\renewcommand{\d}{\, \mathrm{d}}
\newcommand{\ds}{\displaystyle}
\newcommand{\ph}{\phi}
\newcommand{\rar}{\rightarrow}
\newcommand{\Hpn}{\mathcal{H}^{n+1}_{\text{p}}}
\newcommand{\dhalf}{D_t^{1/2}} 
\newcommand{\HT}{H_t} 
\newcommand{\pc}{\mathcal{P}}
\DeclareMathOperator{\diam}{diam}
\DeclareMathOperator{\Hil}{H}
\DeclareMathOperator{\osc}{osc}
\begin{document}
\allowdisplaybreaks

\title[Parabolic Corona Decompositions]{Corona Decompositions for Parabolic\\ Uniformly Rectifiable Sets}

\author{S. Bortz}
\address{Department of Mathematics
\\
University of Alabama
\\
Tuscaloosa, AL, 35487, USA}
\email{sbortz@ua.edu}
\author{J. Hoffman}
\address{Department of Mathematics
\\
University of Missouri
\\
Columbia, MO 65211, USA}
\email{jlh82b@mail.missouri.edu}
\author{S. Hofmann}
\address{
Department of Mathematics
\\
University of Missouri
\\
Columbia, MO 65211, USA}
\email{hofmanns@missouri.edu}
\author{J.L. Luna-Garcia}
\address{Department of Mathematics \& Statistics \\ McMaster University \\ Hamilton, ON L8S 3L8, Canada} \email{lunagaj@mcmaster.ca}
\author{K. Nystr\"om}
\address{Department of Mathematics, Uppsala University, S-751 06 Uppsala, Sweden}
\email{kaj.nystrom@math.uu.se}

\thanks{The authors J. H., S. H., and J.L. L.-G.  were partially supported by NSF grants
DMS-1664047 and DMS-2000048. K.N. was partially supported by grant  2017-03805 from the Swedish research council (VR)}

\subjclass[2010]{28A75, 30L99, 43A85.}
\date{\today}

\begin{abstract}
We prove that parabolic uniformly rectifiable sets admit (bilateral) corona decompositions with respect to regular Lip(1,1/2) graphs. Together with our previous work, this allows us to conclude that if $\Sigma\subset\mathbb R^{n+1}$ is parabolic Ahlfors-David regular, then the following statements are equivalent.
\begin{enumerate}
\item $\Sigma$ is parabolic uniformly rectifiable.
\item $\Sigma$ admits a corona decomposition with respect to regular Lip(1,1/2) graphs.
\item $\Sigma$ admits a bilateral corona decomposition with respect to regular Lip(1,1/2) graphs.
\item $\Sigma$ is big pieces squared of regular Lip(1,1/2) graphs.
\end{enumerate}
\end{abstract}

\maketitle

\tableofcontents

\section{Introduction}

The results proved in this paper are part of a program devoted to providing parabolic analogues of the influential work of G. David and S. Semmes \cite{DS1,DS2} concerning uniformly rectifiable sets. At the time  \cite{DS1} was written, tools from the Calder\'on-Zygmund school of harmonic analysis were being rapidly adapted to less classical and rougher geometrical settings. A particularly important development was the proof of the $L^2$ boundedness  of the Cauchy integral on
Lipschitz curves \cite{CMM}, and subsequent developments concerning the boundedness of singular integral operators of Calder\'on type on Lipschitz graphs and
curves \cite{CDM}  (see also \cite{CCFJR, D4,M,LMcS, CS}). In addition, the Calder\'on-Zygmund theory
was extended beyond the Euclidean setting (see e.g. \cite{Ch}).

When restricting to the setting of $d$-dimensional subsets of $n$-dimensional Euclidean space,
David and Semmes \cite{DS1,DS2} found necessary and sufficient conditions on these sets for the validity of the
$L^2$ boundedness of $d$-dimensional singular integral operators. David and Semmes coined the sets having this property
{\it uniformly rectifiable}, and through a remarkable series of works they
provided numerous characterizations of these sets. Of particular relevance to this paper is the following theorem stating a subset of the equivalent characterizations of
 uniformly rectifiable sets. We refer to \cite{DS1} for  exact definitions of the different notions appearing in Theorem \ref{DS1URCorona.thrm}.

\begin{theorem}[\cite{DS1}]\label{DS1URCorona.thrm}
Let $E \subset \ree$ be $n$-dimensional Ahlfors(-David) regular. Then the following statements are equivalent.
\begin{enumerate}[label=(\alph*)]
\item $E$ is uniformly rectifiable.
\item $E$ admits a corona decomposition with respect to Lipschitz graphs.
\item  All sufficiently regular  convolution type
Calderon-Zygmund operators  with odd kernels are $L^2$ bounded on $E$.
\end{enumerate}
\end{theorem}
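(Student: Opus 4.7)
The plan is to establish the equivalences via the cycle (a) $\Rightarrow$ (b) $\Rightarrow$ (c) $\Rightarrow$ (a), with the first two links being essentially geometric/combinatorial and the third being the analytic heart of the matter.

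For (a) $\Rightarrow$ (b), I would work in a David-Semmes dyadic cube system on $E$ and start from a standard quantitative form of uniform rectifiability, most conveniently the bilateral weak geometric lemma: the Carleson packing of dyadic cubes $Q$ on which the bilateral Jones $\beta$-number $b\beta_E(Q)$ exceeds a fixed threshold. I would then perform a corona/stopping-time construction. Given a top cube $Q_0$, I collect its descendants into a \emph{coherent tree} so long as the best-approximating affine $n$-plane tilts by only a small angle and sits at controlled distance from $E$, relative to the plane at $Q_0$. Stopping cubes are those where tilt or offset thresholds fail or where $b\beta_E(Q)$ is large. The accumulated smallness of $\beta$-numbers along each chain produces, via the standard tilt-summing estimate, a Lipschitz graph of small slope that approximates $E$ throughout the tree, while the packing of stopping cubes follows from the hypothesized Carleson estimate on $b\beta_E$ together with elementary estimates controlling tilt by $\beta$-numbers.

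For (b) $\Rightarrow$ (c), I would take a given odd-kernel CZ operator $T$ and use the corona decomposition as an organizing device to compare $T$ on $E$ to $T$ on the approximating Lipschitz graphs $\Gamma_{\mathbf{S}}$. Within each stopping tree $\mathbf{S}$, the $L^2(\Gamma_{\mathbf{S}})$-boundedness of $T$ is the classical theorem of \cite{CMM,CDM}; transferring this to $E$ produces error terms that are handled by Cotlar-type inequalities and a quadratic-estimate/Carleson-embedding argument, summable thanks to the packing condition on the stopping cubes. The odd-kernel hypothesis is used precisely to cancel the principal-value ambiguity in the graph-to-set transfer.

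The hard direction, and the main obstacle, is (c) $\Rightarrow$ (a). The strategy is to exploit $L^2$-boundedness of a sufficiently rich family of odd CZ operators to derive Carleson packing estimates for $\beta_E$. The mechanism is that any antisymmetric kernel annihilates affine functions on a plane, so its non-triviality on $E$ quantitatively detects deviation from flatness. Concretely, one tests $T\mathbf{1}_E$ against itself on truncations at each scale, and via a $T1/Tb$-type stopping-time argument --- the full David-Semmes machinery --- extracts square-function estimates whose coefficients dominate a bilateral $\beta$-number at each $Q$. Summing these yields the bilateral weak geometric lemma, and hence uniform rectifiability. The principal difficulty is converting a single global $L^2$-operator bound into a local, scale-by-scale geometric Carleson condition; this is where the bulk of the technical work of \cite{DS1,DS2} lies, and where the higher-codimension setting forces one to go well beyond the Menger-curvature calculus available on curves.
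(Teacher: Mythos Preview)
The paper does not prove this theorem; it is quoted from \cite{DS1} as background, and the paper's own work is the parabolic analogue (Theorems~\ref{main} and \ref{main.bil.thrm}). Since the paper's proof of Theorem~\ref{main} follows \cite{DS1} very closely, one can still compare your outline to the actual \cite{DS1} argument as reflected here.

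Your route for (a)$\Rightarrow$(b) differs from \cite{DS1} in its starting hypothesis: you take the \emph{bilateral} weak geometric lemma as the working form of uniform rectifiability, whereas \cite{DS1} (and this paper, in the parabolic case) works directly from the \emph{unilateral} $\beta_2$ Carleson condition, which in \cite{DS1} is essentially the definition of UR. There, good and bad cubes are sorted by unilateral $\beta_\infty$; stopping within a regime is governed solely by angle drift of the approximating plane (condition~(A) in \eqref{dico}); the Lipschitz graph is produced by a Whitney-type extension off the zero set of the stopping-time distance (Section~\ref{subsgraphconst}); and the packing of maximal cubes is obtained by pushing the $\beta$-Carleson estimate to the graph and arguing by contradiction (Sections~\ref{pushsqfn}--\ref{packsttime}). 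The bilateral information you invoke is not available at that stage: in this paper it is derived only \emph{after} the unilateral corona, via big pieces (Theorem~\ref{URimpPBWGL}, relying on \cite{BHHLN1}), and is then used to upgrade to the bilateral corona in Section~\ref{s5}. Your approach is legitimate in the elliptic setting if you import the BWGL from \cite{DS2}, and it does buy a cleaner graph construction since two-sided closeness is already in hand; but it is not the \cite{DS1} argument, and it suppresses the nontrivial step of getting from the $\beta$-condition to BWGL. Your sketches of (b)$\Rightarrow$(c) and (c)$\Rightarrow$(a) are, at the level of outline, consistent with \cite{DS1}.
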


The purpose of this paper is to prove that the implication (a) $\implies$ (b) holds in the context of {\it parabolic} uniformly rectifiable sets (see Definition \ref{def1.UR}).  To prove that the implication (c) $\implies$ (b) holds in the parabolic setting is, as of writing, an interesting and open problem.  Combining the main results established in this paper, see Theorem \ref{main} and Theorem \ref{main.bil.thrm} below,  with the results of our previous work \cite{BHHLN1}, we obtain the following.

\begin{theorem}\label{thrm.char}
Let $\Sigma \subset \ree$ be parabolic Ahlfors-David regular (see Definition \ref{def1.ADR}). Then the following statements are equivalent.
\begin{enumerate}[label=(\alph*)]
\item $\Sigma$ is parabolic { uniformly} rectifiable.
\item $\Sigma$ admits a corona decomposition with respect to regular Lip(1,1/2) graphs.
\item $\Sigma$ admits a bilateral corona decomposition with respect to regular Lip(1,1/2) graphs.
\item $\Sigma$ is big pieces squared of regular Lip(1,1/2) graphs.
\end{enumerate}
\end{theorem}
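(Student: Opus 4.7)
The plan is to close the cycle of implications
$(a) \Rightarrow (c) \Rightarrow (b) \Rightarrow (d) \Rightarrow (a)$, drawing on Theorems~\ref{main} and \ref{main.bil.thrm} of this paper together with results from \cite{BHHLN1}. The step $(c) \Rightarrow (b)$ is essentially tautological: a bilateral corona decomposition is, by construction, a (unilateral) corona decomposition together with additional two-sided approximation data, so discarding the latter yields (b). The genuinely new implications, and the heart of the paper, are $(a) \Rightarrow (b)$ and its bilateral strengthening $(a) \Rightarrow (c)$, which are exactly Theorems~\ref{main} and \ref{main.bil.thrm}.

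For the passage from a (bilateral) corona decomposition to big pieces squared, I would invoke the construction from \cite{BHHLN1}. The stopping-time tree of the decomposition produces, on each surface ball of $\Sigma$, a subset of definite parabolic surface measure on which $\Sigma$ is well-approximated by a regular Lip(1,1/2) graph. Patching these pieces across neighbouring stopping-time regions (where the bilateral nature of (c) removes sign ambiguities and simplifies the matching) yields a big piece of a set which is itself a big piece of a single regular Lip(1,1/2) graph, i.e.\ property (d). The reverse implication $(d) \Rightarrow (a)$, also from \cite{BHHLN1}, relies on the fact that regular Lip(1,1/2) graphs are themselves parabolic uniformly rectifiable and that parabolic UR is stable under taking big pieces, so it propagates through two successive applications of the big-pieces operation.

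The main obstacle is therefore the implication $(a) \Rightarrow (c)$ to be established in Theorem~\ref{main.bil.thrm}. Parabolic UR furnishes Carleson-type control on appropriate bilateral parabolic $\beta$-numbers, and a standard stopping-time argument then partitions $\Sigma$ into coherent regions on which it is approximated from both sides by planes. The hard part is to promote these planar approximations into an honest regular Lip(1,1/2) graph on each region. Beyond the expected Lipschitz-in-space and $\tfrac12$-H\"older-in-time bounds, one must produce a Carleson/square-function estimate for the half-order time derivative of the graph function --- the very condition that distinguishes \emph{regular} Lip(1,1/2) graphs from the merely Lip(1,1/2) ones --- while respecting the parabolic scaling and the ADR hypothesis on $\Sigma$. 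This square-function control of the half-derivative on each stopping-time region is where the bulk of the technical work will lie, and once it is in hand the assembly of Theorem~\ref{thrm.char} from the four implications above is routine.
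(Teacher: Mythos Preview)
Your proposal is correct and matches the paper's approach: the equivalence is obtained precisely via the cycle $(a)\Rightarrow(c)\Rightarrow(b)\Rightarrow(d)\Rightarrow(a)$, with $(c)\Rightarrow(b)$ trivial, $(b)\Rightarrow(d)$ and $(d)\Rightarrow(a)$ taken from \cite{BHHLN1}, and the new implications $(a)\Rightarrow(b)$ and $(a)\Rightarrow(c)$ supplied by Theorems~\ref{main} and~\ref{main.bil.thrm}. One small refinement worth noting is that the bilateral $\beta$-number control needed for $(a)\Rightarrow(c)$ is not read off directly from the definition of parabolic UR, but is obtained \emph{a posteriori} through the chain $(a)\Rightarrow(b)\Rightarrow(d)$ and \cite[Theorem~4.15]{BHHLN1} (the PBWGL), after which the graph-construction machinery of Theorem~\ref{main} is re-run on the bilateral stopping-time regimes.
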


All implications stated in Theorem \ref{thrm.char}, but the implications (a) $\implies$ (b) and (a) $\implies$ (c), are contained\footnote{The implication (c) $\implies$ (b) is trivial.} in \cite{BHHLN1}. We will prove\footnote{We could also follow the more indirect method in \cite{HMM1}, but we present an alternative approach here.} directly that (a) and (b) imply (c) by
essentially re-running the machine that we use to prove that
(a) $\implies$ (b), and by using additional information gleaned from the
fact that (b) $\implies$ (d). This means that the bulk of the paper is mainly devoted to the proof  of  the implication
(a) $\implies$ (b) and our proof will involve
a very careful adaptation of the proof in \cite{DS1}, along with some additional
arguments utilizing ideas from \cite{HLN}. The implication (a) $\implies$ (b) is the content of Theorem \ref{main} stated in precise form
below. The precise statement of the result that (a) $\implies$ (c) is given in Theorem \ref{main.bil.thrm}.

The study of parabolic uniformly rectifiable sets $\Sigma \subset \ree$ emanates in \cite{HLN}, \cite{HLN1} where the third and fifth author, together with J. Lewis,  introduced the  notion of parabolic uniformly rectifiable sets. In \cite{HLN} the existence of big pieces of regular Lip(1,1/2) graphs, under the additional assumption that $\Sigma$ is Reifenberg flat in the parabolic sense, is established. The results in \cite{HLN}, \cite{HLN1} were the first of their kind in the context of parabolic problems and the studies \cite{HLN}, \cite{HLN1} were motivated by the study of parabolic/caloric measure in rough time-varying domains.

By definition, a
Lip(1,1/2) graph is the graph of a function which is Lipschitz with respect to the parabolic metric. A Lip(1,1/2) graph is a regular Lip(1,1/2) graph if, in addition, the $1/2$-order time derivative of the function defining the graph is in (parabolic) BMO (see Definition \ref{goodgraph.def}). Combining the results of Section 2 in \cite{HLN1} with the developments in \cite{LM,H,HL}, it follows that
a Lip(1,1/2) graph is parabolic { uniformly} rectifiable in the sense of  \cite{HLN}, \cite{HLN1} if and only if it is a regular Lip(1,1/2) graph. This explains the importance
of regular Lip(1,1/2) graphs in the statement of Theorem \ref{thrm.char}, and in the statements of Theorem \ref{main} and Theorem \ref{main.bil.thrm}.  As proved in \cite{LS},
\cite{KW}, the class of regular Lip(1,1/2) graphs is strictly smaller than the class of  Lip(1,1/2) graphs.

The notions of regular Lip(1,1/2) graphs and parabolic uniform rectifiability are deeply rooted in the study of the Dirichlet problem for the heat equation in time-varying (graph) domains, and the solvability of the $L^p$-Dirichlet problem for the heat equation is intimately connected to quantitative mutual absolute continuity of the caloric measure and the surface measure. In particular, in \cite{KW} it was proved that there are Lip(1,1/2) graph domains for which caloric measure and surface measure are mutually singular. Later, in their pioneering work,  Lewis, Silver and Murray \cite{LS,LM} proved that for regular Lip(1,1/2) graph domains, the caloric measure and the surface measure are quantitatively related in the sense that they are mutual absolutely continuous, and the associated parabolic Poisson kernel satisfies a scale-invariant reverse H{\"o}lder inequality in $L^p$ for some $p\in (1,\infty)$. The importance and relevance of regular Lip(1,1/2) graph domains, from the perspective of parabolic singular integrals, layer potentials, boundary value problems and inverse problems,  is emphasized through the works in \cite{LS,LM,H1,H,HL,HL2,LN,N1}. In particular, in \cite{HL}
the solvability of the $L^2$-Dirichlet
problem (and of the $L^2$-Neumann problem)  for the heat equation, using layer potentials, was { obtained} 
in the region above a regular Lip(1,1/2) graph under the restriction that { the} 1/2-order time derivative (measured in BMO) of the function defining the graph is small\footnote{This smallness is sharp in the
sense that there are regular Lip(1,1/2) graph domains for which the $L^2$-Dirichlet problem is not solvable.
On the other hand, the $L^p$-Dirichlet problem is solvable for some $p<\infty$ for all
regular Lip(1,1/2) graph domains \cite{LM}.}. The highly influential work \cite{HL+} (see also \cite{DPP}), devoted to parabolic operators with singular terms, should also be mentioned in this context. In particular, in \cite{HL+} the method of extrapolation of Carleson measure estimates was introduced, a method which was crucial in the resolution of the Kato conjecture, see \cite{AHLeMcT}, \cite{AHLMcT}.

 In \cite{HLN}, \cite{HLN1} parts of the analysis on regular Lip(1,1/2) graph domains mentioned above was extended beyond the  setting of graphs. However, other than \cite{HLN}, \cite{HLN1} only a few notable works in rougher parabolic settings have appeared \cite{E,GH,GH2,N,NS,MP,MPT}. This is in contrast to the corresponding elliptic contexts, see, for example,  \cite{DJ,HMM1,GMT, AGMT,HLMN,MT,AHMMT},  and it is a long term goal of the authors to develop a corresponding
parabolic theory. The present paper is one of the starting points of that effort.
In particular, in forthcoming work we will show,  using Theorem \ref{thrm.char}, that the main theorem in \cite{HMM1} can be extended to the parabolic setting.
In addition, in  \cite{BHHLN2} we have recently obtained,  by expanding on \cite{NS}, a flexible parabolic
analogue of the work of David and Jerison \cite{DJ}. Still, to establish parabolic analogues of, for example,
\cite{GMT, AHMMT,HLMN} remain open problems which may require new methods and tools.

As mentioned, in the  elliptic setting, some of the most significant recent progress in this area relies
on methods which are currently unavailable in the parabolic setting. Thus, to
prove parabolic versions of those results, one will require
significantly new ideas.  However, to prove the results of this paper, we are able to develop parabolic versions of the methods in \cite{DS1,HMM1}. We prove two main results, whose statements may be found below. First, Theorem \ref{main},
whose elliptic analogue appears in \cite{DS1}, and we give the proof of Theorem \ref{main} in  Section \ref{subsgraphconst}, Section \ref{pushsqfn} and Section \ref{packsttime}. Second, Theorem \ref{main.bil.thrm}, whose elliptic analogue appears in \cite{HMM1}, and we give the proof of Theorem \ref{main.bil.thrm} in Section \ref{s5}.  Concerning Theorem \ref{main}, it turns out that we can follow quite closely the
corresponding arguments in \cite{DS1}.  The reader who is familiar with that work will certainly recognize substantial parts of the proof. The novelties
in Section \ref{subsgraphconst}, Section \ref{pushsqfn} and Section \ref{packsttime} are in many respects mainly of technical nature:  they arise here to
treat the extension to the parabolic setting, and they rely in part on some ideas from \cite{HLN}.
On the other hand, the corona decomposition is of central importance and utility in the theory of
quantitative rectifiability, and we believe that this fact justifies a complete and careful treatment
in the parabolic case.  Finally, we note that the converse to Theorem \ref{main}, which we have established
(in particular) in \cite{BHHLN1}, does indeed rely on some new methods that are significantly different to those
 previously appearing in the literature.

As we have previously mentioned in \cite{BHHLN1, BHHLN2}, it is true that  in \cite{RN1, RN2,RN3},  the author took on the ambitious challenge to develop the theory of
parabolic uniformly rectifiable sets. Unfortunately though, 
at least in the papers in \cite{RN1,RN2},
the author either gives no proofs of his claims or he 
supplies proofs which have fundamental errors.  
In the present work we give a correct proof of the main result claimed in \cite{RN1}, and we shall explain momentarily
 two principal errors (as well as a third, more technical error)
 in the latter paper.  
 The errors in \cite{RN2} are discussed in some detail in 
 our paper  \cite{BHHLN2}, which gives a correct proof of those results. 
 
In \cite{RN1} the author claims to prove that parabolic uniform rectifiability implies corona decompositions with respect to regular Lip(1,1/2) graphs.  In \cite{RN1}, the author, as we do here, follows rather closely the outline of the proof of the corresponding elliptic result in \cite{DS1}, while also, as we do, employing some ideas from \cite{HLN} in order to adapt the arguments of \cite{DS1} to the parabolic case.  It is in the latter context that the proof in \cite{RN1} breaks down, for the following reason:
in Lemma 2.7 in \cite{RN1},  the constant $\hat{c}$ cannot necessarily be taken small, given only the assumption of parabolic uniform rectifiability. In \cite{HLN}, the constant is small by virtue of the assumption of Reifenberg flatness used in that paper. 
Without smallness, the sets $\{E_i\}$, introduced in subsection 2.3.1 in \cite{RN1}, need not, as claimed,  have bounded overlaps and therefore the estimate in display (20) in \cite{RN1} is not valid.  This is a critical gap in the proof: it means that in \cite{RN1} there is no proof of the ``regularity" (see Definition \ref{goodgraph.def} below for the definition of a {\em regular} Lip(1,1/2) graph) of the approximating graphs, which is the essential feature required of the graphs in the parabolic setting (and which has no analogue in the elliptic case); thus, the proof in \cite{RN1} breaks down precisely at the point in the argument where one needs to do something new in the parabolic setting that has no counterpart in the corresponding elliptic
proof.
Below we deal with this problem by using a counting function introduced in \cite{DS1}, see Lemma \ref{count} below.

A second critical error in \cite{RN1} appears in the claimed proof of \cite[Theorem 1.5]{RN1}, which in turn relies\footnote{Even the reduction of  \cite[Theorem 1.5]{RN1} to \cite[Theorem 3.1]{RN1} is not clear to the authors. Indeed, the claimed bilateral approximation does not follow as in the elliptic setting as `nice' parabolic Littlewood-Paley kernels are always odd in $x$, but {\bf not}  necessarily in $(x,t)$ jointly.
As a consequence, they are unable to detect the difference between a $t$-independent plane and the same plane omitting the points with $t \in (a,b)$. In particular, they do not detect all `holes'; this would be essential for \cite[Claim 1, Section 3]{RN1} to ``be easily adapted from \cite[page 24]{DS1}" , as claimed therein.} on the claimed result  \cite[Theorem 3.1]{RN1}.  In fact, the latter is false.
Indeed, in contrast to the situation in the elliptic setting, parabolic uniform rectifiability is {\em not} characterized by Carleson set conditions such as the ``Bilateral Weak Geometric Lemma", even in the case of a Lip(1,1/2) graph;  see \cite[Observation 4.19]{BHHLN1} for a discussion of this issue and a counter-example.  Thus the proof of  \cite[Theorem 1.5]{RN1} collapses.

In addition, we mention a technical (but still serious) error, 
in the crucial estimate in the display
between displays (32) and (33) in \cite{RN1}.  Here, the author claims a Poincar{\'e} inequality that need not hold if the function $\psi_2$ depends on time, which it certainly does since the problem is parabolic; thus, again the error arises because the author in \cite{RN1} did not address the difference between the elliptic and parabolic settings.
In this context, the correct estimates, based on a more refined inequality of Poincar\'e type 
appropriate to the parabolic setting, are given in our Lemma \ref{l3} below.

The rest of the paper is organized as follows. Section \ref{pre} is mainly of preliminary nature and we here provide notation and definitions. In Section \ref{mr} we state our main result: Theorem \ref{main} and Theorem \ref{main.bil.thrm}. In Subsection \ref{s4} we briefly discuss the outline of the proof of Theorem \ref{main} to be presented in subsequent sections. The proof is divided into several parts and our argument
adapts the corresponding arguments  of \cite{DS1}, also utilizing some ideas from \cite{HLN}, to deal with the extension to the parabolic setting.
The proof of Theorem \ref{main} is given in Section \ref{subsgraphconst}, Section \ref{pushsqfn} and Section \ref{packsttime} and progresses from the construction of a Lip(1,1/2) graph with small constant for each stopping time regime, to the pushing of the geometric square function estimate (the Carleson measure estimate) to the graph, also verifying that the graph is actually a regular Lip(1,1/2) graph, and finally to the packing of the stopping time regimes. In Section \ref{s5} we then prove Theorem \ref{main.bil.thrm} by improving Theorem \ref{main}(4) to give a
bilateral approximation as in Theorem \ref{main.bil.thrm}(4). The paper ends with two appendix containing a few technical estimates and observations used in the paper.

 \section{Preliminaries}\label{pre}
In this section we provide notation and definitions. We will for simplicity assume $n\geq 2$. Our results also remain valid in the case $n=1$, but in this case some notations used in the paper have to be adjusted. Points in Euclidean $ (n+1) $-space
$ \mathbb R^{n+1} $ are denoted by $ (X,t) = ( x_1,
 \dots,  x_n,t)$,  where $ X = ( x_1, \dots,
x_{n} ) \in \mathbb R^{n } $ are the spatial coordinates and $t$ represents the
time-coordinate. We  set $$d := n+1.$$
In general, $c$ will denote a positive constant satisfying $1\leq c<\infty$.
We write  $c_1\lesssim c_2$, if $c_1/c_2$ is bounded from above by a positive
constant depending on the structural constants of a  lemma or theorem
(e.g. $n, d,$ the constant defining the Ahlfors-David regularity or the constants defining parabolic uniform rectifiability). We write $c_1\sim c_2$, if $c_1\lesssim c_2$ and $c_2\lesssim c_1$.

We let $  \lan \cdot ,  \cdot  \ran $  denote  the standard inner
product on $ \mathbb R^{n} $ and we let  $  | X | = \lan X, X \ran^{1/2} $ be
the  Euclidean norm of $ X. $  We let $||(X,t)||:=|X|+|t|^{1/2}$. Given $(X,t), (Y,s)\in\mathbb R^{n+1}$ we let $$d_p(X,t,Y,s):=d_p((X,t),(Y,s))=|X-Y|+|t-s|^{1/2},$$ and
we define $ d_p( X,t, E ) $  to equal the parabolic distance, defined with respect to $d_p(\cdot,\cdot)$, from
 $  (X,t) \in \mathbb R^{n+1} $ to $ E\subset\mathbb R^{n+1}$. We let $\diam(E)$ denote the parabolic diameter of $E$, that is,
 \[\diam(E) := \sup\{d_p(X,t,Y,s): (X,t), (Y,s) \in E\}.\]

We let
\begin{align}\label{cube1}
 C_r ( X, t ) \, := \, \{ ( Y, s )\in \mathbb R^{n} \times \mathbb{R} : | y_i  - x_i| < r, | t - s | < r^2  \},
 \end{align}
 whenever $(X,t)\in
\mathbb R^{n+1}$, $r>0$, and we will refer to $C_r(X,t) $ as a parabolic cube
of ``length" $r$ (in $\mathbb{R}^{n+1}$). For $(x,t) \in \mathbb{R}^n$, we will use the notation
\begin{align}\label{cube2} C'_r ( x, t ) \, := \, \{ ( y, s )\in \mathbb R^{n-1} \times \mathbb{R} : | y_i  - x_i| < r, | t - s | < r^2  \},
\end{align}
to denote a parabolic cube in $\mathbb{R}^n$.

We let $ \d X  $, $ \d x  $, denote  Lebesgue  measure on  $ \mathbb R^{n}$, $ \mathbb R^{n-1}$, respectively, and, given $\eta> 0$, we let $\H^\eta$ denote the (standard Euclidean) $\eta$-dimensional Hausdorff measure. To continue we need to define Hausdorff measure adapted to the parabolic setting. This is done in a straightforward way replacing the standard Euclidean diameter with its parabolic counterpart.

\begin{definition}
For $\eta, \delta > 0$, $E \subseteq \ree$, we define
\[\H^\eta_{p,\delta}(E) = \inf\left\{ \sum \diam(E_j)^\eta: E \subseteq \cup_j E_j, \diam(E_j) \le \delta \right\}.\]
We define the parabolic Hausdorff measure, of homogeneous dimension $\eta$, as
\begin{align}\label{hme}
\H^\eta_p(E) := \lim_{\delta \to 0^+} \H^\eta_{p,\delta}(E).
\end{align}
\end{definition}

Note that the limit in \eqref{hme} exists (possibly as $+ \infty$) since $\H^\eta_{p,\delta}(E)$ is a decreasing function on $(0, \infty)$.

In the sequel, $\Sigma \subset \mathbb R^{n+1}$ will denote a closed set which will be clear from the context. Below we will, for  $(X, t ) \in \Sigma $ and $r>0$, use the notation
$\Delta(X,t,r)=\Delta_r(X,t):=\Sigma\cap C_r(X,t)$ to denote a (parabolic) ``surface ball".   We define
\begin{equation}\label{sigdef}
\sigma := \sigma_{\Sigma} := \H^{n+1}_p|_\Sigma,
\end{equation}
to denote the parabolic surface measure on $\Sigma$.

Also other notions of surface measure can be considered in the parabolic context. Indeed, we let
\begin{equation}\label{slicewise.def}
 \mu ( E ) := \int_\re \int_{\rn \times \{t\}} \mathbbm{1}_{E}(X,t) \d \H^{n-1}(X) \, \d\H^{1}(t),
 \end{equation}
 where $\mathbbm{1}_{E}$ is the indicator function for the set $E$, denote a slice-wise or product-like measure of the set $E\subset\mathbb R^{n+1}$. This measure has been used in previous works concerning parabolic uniform rectifiability as in  \cite{KW,LM,H,HL,HL2,HLN,HLN1}, the parabolic surface measure was defined as
 \begin{equation}\label{sigdef+}
\sigma^s := \mu|_\Sigma.
\end{equation}

In this paper we do not use the measure in \eqref{sigdef+}, instead we use the measure defined in \eqref{sigdef}. We here collect a number of remarks concerning this choice. The non-trivial statements in Remark \ref{r-measures},  (i), (v), and (vi), are proved in Appendix \ref{slicevspara.sect} below.

\begin{remark}\label{r-measures} \ \smallskip
\begin{itemize}
\item[(i)] If $\sigma^{\bf s}$ (or for that matter
{\em any} measure $\mathfrak{m}$ defined on $\Sigma$) satisfies the parabolic
Ahlfors-David Regularity (p-ADR) condition (see Definition \ref{def1.ADR} below),
then so does $\si$, and in that case the two measures are of course equivalent.
This follows easily from the definition of the $\Hpn$ measure, and it is really just the same phenomenon that occurs in the classical (elliptic) case,
see \cite{DS1}. We give the proof for $\sigma^{\bf s}$ in Appendix \ref{slicevspara.sect}.

\smallskip

\item[(ii)] Consequently, if $\Sigma$ is a Lip(1,1/2) graph, then $\sigma \approx \sigma^{\bf s}$.
In particular, on a hyperplane $\pc\subset \ree$ parallel to the $t$-axis, which we may identify with
Euclidean space $\re^n$, we have that
$\H^n|_\pc \approx \Hpn|_\pc$, since the former is just $n$-dimensional Lebesgue measure
on $\pc$, which is parabolic ADR on $\Sigma = \pc$.

\smallskip

\item[(iii)]  If $\pc$ is a hyperplane parallel to the $t$-axis, and if $\pi$ is the
orthogonal projection operator onto $\pc$, then $\Hpn$ measure does not increase
under the action of $\pi$.  In particular, by virtue of (ii), we have, for any Borel set $A$,
that $\H^n(\pi(A)) = \Hpn(\pi(A)) \leq \Hpn(A)$.

\smallskip

\item[(iv)] If $\Sigma$ is parabolic uniformly rectifiable (p-UR set; see Definition
\ref{def1.UR} below), defined with respect\footnote{This means replacing $\sigma$ by $\sigma^{\bf s}$ in the definition of p-UR below and working with a set such that $\sigma^{\bf s}$ is p-ADR.} to $\sigma^{\bf s}$, then using  (i) one deduces\footnote{By nothing more than chasing definitions.} that $\Sigma$ is also p-UR defined with respect to $\sigma$. Thus, {\it a priori} p-UR with respect to $\sigma^{\bf s}$ is a stronger notion, that is, it implies p-UR with respect to $\sigma$.

\item[(v)]If  $\Sigma$ is parabolic uniformly rectifiable (p-UR; see Definition
\ref{def1.UR} below), with respect  to $\sigma$,
then the two measures $\sigma^{\bf s}$ and $\sigma$ are equivalent and hence one deduces (in the same way as (iv)) that $\Sigma$ is parabolic uniformly rectifiable, defined with respect to $\sigma^{\bf s}$. Thus, {\it a posteriori} the notions of p-UR are equivalent.

\smallskip

\item[(vi)] The measures are not equivalent in general, even in the p-ADR setting.  In fact,
$\sigma^{\bf s} \leq c(n) \sigma$,
but the other direction does not need to hold.
\end{itemize}
\end{remark}

\begin{definition}\label{tindplane.def}
We say that a $n$-dimensional hyperplane $P \subset \ree$ is a $t$-independent plane if $P$ contains a line in the $t$-direction. Equivalently, if $\vec{N}_P$ is the normal to $P$, then $\vec{N}_P \cdot (\vec{0},1) = 0$. We let $\mathcal{P}$ denote the collection of all $t$-independent planes. If $P \in \mathcal{P}$ we let $P_x\subset \mathbb{R}^n$ be the $(n-1)$-dimensional plane that defines $P = P_x \times \re$.
\end{definition}

Given a function $\psi:\mathbb R^{n-1}\times\mathbb R\to \mathbb R$  we let $D_{1/2}^t \psi  (x, t) $ denote
the $ 1/2 $ derivative in $ t $ of $ \psi ( x, \cdot ), x $ fixed.
This half derivative in time can be defined by way of the Fourier
transform or by
\begin{eqnarray} \label{1.8}
 D_{1/2}^t  \psi (x, t)  \equiv \hat c \int_{ \mathbb R }
\, \frac{ \psi ( x, s ) - \psi ( x, t ) }{ | s - t |^{3/2} } \, \d s,
\end{eqnarray} for properly chosen $ \hat c$. We let $ \| \cdot \|_* $ denote the
norm in parabolic $BMO(\mathbb R^{n})$ (replace standard cubes by parabolic cubes in the definition of $BMO$).

\begin{definition}\label{goodgraph.def}
A function $\psi:\mathbb R^{n-1}\times\mathbb R\to \mathbb R$ is called Lip(1,1/2) with constant $b_1$, if
\begin{eqnarray}\label{1.1}
|\psi(x,t)-\psi(y,s)|\leq b_1(|x-y|+|t-s|^{1/2})\
\end{eqnarray}
whenever $(x,t)\in\mathbb R^{n}$, $(y,s)\in\mathbb R^{n}$. If $\Sigma = \{(x, \psi(x,t), t): (x,t) \in \mathbb{R}^{n-1} \times \mathbb{R}\}$ in the coordinates $P_x \times (P_x)^\perp \times \re$, for some $t$-independent plane $P \in \mathcal{P}$ and Lip(1,1/2) function $\psi$, then we say that $\Sigma$ is a Lip(1,1/2) graph. We say that $ \psi = \psi ( x, t ) : \mathbb R^{n-1}\times\mathbb R\to \mathbb R$ is a {\it regular} Lip(1,1/2)  function
with parameters $b_1$ and $b_2$,
if $\psi$ satisfies \eqref{1.1} and if
 \begin{eqnarray} \label{1.7}
 D_{1/2}^t\psi\in BMO(\mathbb R^n), \ \ \|D_{1/2}^t\psi\|_*\leq b_2<\infty.
\end{eqnarray}
  If $\Sigma = \{(x, \psi(x,t), t): (x,t) \in \mathbb{R}^{n-1} \times \mathbb{R}\}$ in the coordinates $P_x \times (P_x)^\perp \times \re$, for some $t$-independent plane $P \in \mathcal{P}$ and {\it regular} Lip(1,1/2) function $\psi$, then we say that $\Sigma$ is a {\it regular} Lip(1,1/2) graph.
\end{definition}

\begin{definition}\label{def1.ADR} Let
$\Sigma \subset \mathbb R^{n+1}$ be a closed set.  We say that a measure $\mathfrak{m}$, defined
on $\Sigma$, is {\em parabolic Ahlfors-David regular}, parabolic ADR for short
(or simply p-ADR, or just ADR)
with constant $M\geq 1$,  if
\begin{equation} \label{eq1.ADRha-general}
M^{-1}\, r^d \leq \mathfrak{m}(\Delta(X,t,r)) \leq M\, r^d,\end{equation}
whenever $0<r<\diam{\Sigma}$,  $(X,t)\in \Sigma$, $T_0<t<T_1$ and where $\diam{\Sigma}$ is the
(parabolic) diameter of $\Sigma$ (which may be infinite). If \begin{equation} \label{eq1.ADRha}
M^{-1}\, r^d \leq \sigma(\Delta(X,t,r)) \leq M\, r^d,\end{equation}
then we simply say that
$\Sigma$ is  parabolic ADR (p-ADR, or just ADR).
\end{definition}

As noted above (see Remark \ref{r-measures} (i)), if
\eqref{eq1.ADRha-general} holds for a measure $\mathfrak{m}$ on $\Sigma$, then it holds for
$\sigma$ as in \eqref{sigdef}, i.e. \eqref{eq1.ADRha} holds for a possibly different but still universal choice of $M$.

Following \cite{HLN}, \cite{HLN1} we next introduce parabolic $\beta_2$ numbers, which we will denote by $\gamma$. These are defined in a way analogous to the usual $\beta$ numbers in \cite{DS1,DS2} except that they incorporate  parabolic scalings, parabolic surface measure and they only allow for approximation by $t$-independent planes. Indeed, assume that $\Sigma  \subset \mathbb R^{n+1}$ is parabolic ADR  in the sense of Definition \ref{def1.ADR}. We then let
\begin{eqnarray*} \gamma( Z, \tau, r  ):=\gamma_\Sigma( Z, \tau, r  ):= \inf_{P \in \mathcal{P}}  \biggl ( \, \bariint_{  \Delta ( Z, \tau,r) }  \, \biggl (\frac {d_p ( Y,s, P )}{r}\biggr )^2  \d \sigma (Y, s )\biggr )^{1/2}, \end{eqnarray*}
whenever $(Z,\tau)\in \Sigma $, $r>0$, and where  $\mathcal{P}$  is the set of $ n $-dimensional hyperplanes $ P $ containing a line
parallel to the $ t $ axis. We also introduce
\begin{eqnarray}\label{me}\quad  \d \nu( Z, \tau,
r  ) :=\d \nu_\Sigma ( Z, \tau,
r  )  \, =  \, (\ga_\Sigma  ( Z, \tau, r))^2 \, \d \si ( Z, \tau) \, r^{ - 1 }
\d r. \end{eqnarray}
Recall that $ \nu$  is defined  to be  a Carleson measure on $    \Delta( Y,s,R )  \times ( 0, R )$,  if there exists $ \Gamma <
\infty $ such that
\begin{eqnarray}\label{1.9}
 \nu ( \Delta( X, t,\rho )  \times ( 0, \rho) ) \, \leq \,
\Gamma  \, \rho^{d},
\end{eqnarray}
 whenever $ ( X, t  ) \in \Sigma  $ and $ C_\rho ( X, t ) \subset C_R ( Y, s )$.  The least such $ \Gamma  $ in \eqref{1.9} is
called the Carleson norm of $\nu$ on  $\Delta( Y,s,R ) \times ( 0, R ) $.

We are now ready to give the definition of parabolic uniform rectifiability.

\begin{definition}\label{def1.UR}
Assume that $\Sigma  \subset \mathbb R^{n+1}$ is parabolic ADR  in the sense of Definition \ref{def1.ADR} with constant $M$. Let $\nu=\nu_\Sigma$ be defined as in \eqref{me}. Then
 $\Sigma$ is parabolic uniformly rectifiable,  parabolic UR  for short,  with UR constants $(M,\Gamma)$ if
\begin{eqnarray}\label{eq1.sf}
\| \nu \|:=\sup_{(X,t)\in\Sigma,\ \rho>0}  \rho^{ -d }\nu ( \Delta(X,t,\rho) \times ( 0, \rho) ) \, \leq \,
\Gamma.
\end{eqnarray}
\end{definition}

We will also need a parabolic dyadic decomposition of $\Sigma$. This is a special case of a more general result in \cite{Ch}.

\begin{lemma}\label{cubes}  Assume that $\Sigma  \subset \mathbb R^{n+1}$ is parabolic ADR  in the sense of Definition \ref{def1.ADR} with constant $M$. Then $\Sigma$ admits a parabolic dyadic decomposition in the sense that there exist
constants $ \alpha>0,\, \beta>0$ and $c_*<\infty$,  such that for each $k \in \mathbb{Z}$
there exists a collection of Borel sets, $\mathbb{D}_k$,  which we will call (dyadic) cubes, such that
$$
\mathbb{D}_k:=\{Q_{j}^k\subset\Sigma: j\in \mathfrak{I}_k\},$$ where
$\mathfrak{I}_k$ denotes some  index set depending on $k$, satisfying
\begin{eqnarray*}
(i)&&\mbox{$\Sigma=\cup_{j}Q_{j}^k\,\,$ for each
$k\in{\mathbb Z}$.}\notag\\
(ii)&&\mbox{If $m\geq k$ then either $Q_{i}^{m}\subset Q_{j}^{k}$ or
$Q_{i}^{m}\cap Q_{j}^{k}=\emptyset$.}\notag\\
(iii)&&\mbox{For each $(j,k)$ and each $m<k$, there is a unique
$i$ such that $Q_{j}^k\subset Q_{i}^m$.}\notag\\
(iv)&&\mbox{$\diam\big(Q_{j}^k\big)\leq c_* 2^{-k}$.}\notag\\
(v)&&\mbox{Each $Q_{j}^k$ contains $\Sigma\cap C_{\alpha2^{-k}}(Z^k_{j},t^k_{j})$ for some $(Z^k_{j},t^k_j)\in\Sigma$.}\notag\\
(vi)&&\mbox{$\sigma(\{(Z,t)\in Q^k_j: d_p(Z,t,\Sigma\setminus Q^k_j)\leq \varrho \,2^{-k}\big\})\leq
c_*\,\varrho^\beta\,\sigma(Q^k_j),$}\notag\\
&&\mbox{for all $k,j$ and for all $\varrho\in (0,\alpha)$.}
\end{eqnarray*}
\end{lemma}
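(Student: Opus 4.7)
The plan is to reduce the statement to M.~Christ's general construction of dyadic cubes on a space of homogeneous type, exactly as the excerpt indicates; the conclusion is a direct specialization of the main result of \cite{Ch}. The work amounts to verifying the hypotheses of that construction in our setting and reading off the enumerated properties from the general result.

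First, I would verify that $(\Sigma, d_p, \sigma)$ is a space of homogeneous type. The parabolic distance
\[d_p((X,t),(Y,s)) = |X-Y| + |t-s|^{1/2}\]
is a genuine metric on $\ree$: the triangle inequality reduces to the subadditivity $|t-s|^{1/2} \le |t-r|^{1/2} + |r-s|^{1/2}$, which follows from concavity of $u \mapsto u^{1/2}$. Restricting $d_p$ to the closed set $\Sigma$ gives a metric on $\Sigma$. The parabolic ADR hypothesis \eqref{eq1.ADRha} immediately yields doubling of $\sigma$ with respect to $d_p$:
\[\sigma(\Delta(X,t,2r)) \le M(2r)^d \le 2^d M^2 \, \sigma(\Delta(X,t,r)).\]
Hence $(\Sigma, d_p, \sigma)$ is a space of homogeneous type and all hypotheses of \cite{Ch} are met.

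Next, I would invoke Christ's construction to produce a nested family $\{\mathbb{D}_k\}_{k \in \mathbb{Z}}$ at dyadic scales $\delta^k$ for some fixed $\delta \in (0,1)$. Properties (i)--(iii) are the partition, nesting, and existence-of-ancestor conditions that are built into any such dyadic system; (iv) and (v) are the upper diameter bound and the containment of a proportional inner (parabolic) ball, both listed among the conclusions in \cite{Ch}, with $\alpha > 0$ and $c_*<\infty$ depending only on $n$, $M$ and $\delta$. If one wishes the scale factor to be exactly $1/2$ rather than a general $\delta$, it suffices to pass to a subsequence of generations, re-indexing so that $\delta^{k_0} \sim 1/2$; this only adjusts $\alpha$ and $c_*$.

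The only step requiring more than a quotation is property (vi), the so-called small (or thin) boundary property. Christ proves this by an iterative argument: the set where $d_p(\cdot, \Sigma \setminus Q^k_j) \le \varrho\, 2^{-k}$ is covered by boundedly many subcubes a few generations below $Q^k_j$ whose near-boundary layers contribute geometrically with a small factor at each step, and the doubling of $\sigma$ converts these contributions into a bound of the form $c_* \varrho^\beta \sigma(Q^k_j)$; the exponent $\beta>0$ and the constant $c_*$ emerge from the resulting geometric series. Since all ingredients here are strictly formal and the metric/doubling structure is the only thing to check, I do not anticipate any substantive obstacle: the lemma is stated for organizational purposes and its proof is effectively contained in \cite{Ch}.
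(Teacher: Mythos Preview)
Your proposal is correct and matches the paper's approach exactly: the paper does not give a proof at all, merely stating before the lemma that ``This is a special case of a more general result in \cite{Ch}.'' Your verification that $(\Sigma, d_p, \sigma)$ is a space of homogeneous type and your reading-off of properties (i)--(vi) from Christ's construction is precisely the content the paper leaves implicit.
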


\begin{remark} We denote by  $\mathbb{D}=\mathbb{D}(\Sigma)$ the collection of all $Q^k_j$, i.e., $$\mathbb{D} := \cup_{k} \mathbb{D}_k.$$ For a dyadic cube $Q\in \mathbb{D}_k$, we let $\ell(Q) := 2^{-k}$, and we will refer to this quantity as the size
of $Q$.  Evidently, $\ell(Q)\sim\diam(Q)$ with constants of comparison depending at most on $n$ and $M$.
Note that $(iv)$ and $(v)$ of Lemma \ref{cubes} imply, for each cube $Q\in\mathbb{D}_k$,
that there is a point $(X_Q,t_Q)=(X_Q,t_Q)\in \Sigma$, and  a cube $C_{r}(X_Q,t_Q)$, such that
\begin{equation}\label{cube-ball}
r\approx 2^{-k} \approx \diam (Q)\,,\quad \text{and}\quad
\Sigma\cap C_{r}(X_Q,t_Q)\subset Q \subset \Sigma\cap C_{cr}(X_Q,t_Q),\end{equation}
for some uniform constant $c$. We will denote the associated surface ball by
\begin{equation}\label{cube-ball2}
\Delta_Q(r):= \Sigma\cap C_{r}(X_Q,t_Q),\end{equation}
and we shall refer to the point $(X_Q,t_Q)$ as the center of $Q$. Given a dyadic cube $Q\subset\Sigma$ and $\lambda > 1$, we define
\begin{equation}\label{dilatecube}
\lambda Q:= \Delta_Q(\lambda \diam(Q)).
\end{equation}
\end{remark}

\begin{definition}\cite{DS1}\label{d3.11}
Let $\sbf\subset \dd(\Sigma)$. We say that $\sbf$ is
coherent if the following conditions hold.
\begin{itemize}\itemsep=0.1cm

\item[$(a)$] $\sbf$ contains a unique maximal element $Q(\sbf)$ which contains all other elements of $\sbf$ as subsets.

\item[$(b)$] If $Q$  belongs to $\sbf$, and if $Q\subset \widetilde{Q}\subset Q(\sbf)$, then $\widetilde{Q}\in {\bf S}$.

\item[$(c)$] Given a cube $Q\in \sbf$, either all of its children belong to $\sbf$, or none of them do.

\end{itemize}
We say that $\sbf$ is semi-coherent if only conditions $(a)$ and $(b)$ hold.
\end{definition}

\section{Statement of the main results}\label{mr}
As stated in the introduction, the purpose of this work is to prove (a) $\implies$ (b) and (a) $\implies$ (c) in Theorem \ref{thrm.char}.  To prove (a) $\implies$ (b) we show the following.

\begin{theorem}\label{main}  Suppose that
 $\Sigma\subset\mathbb R^{n+1}$ is parabolic uniformly rectifiable with constants $(M,\Gamma)$.
 Let $\delta\ll 1$
and $\kappa\gg 1$ be two given  positive constants. Then there exists a disjoint decomposition
$\dd(\Sigma) = \B\cup\G$, satisfying the following properties.
\begin{enumerate}
\item The collection $\B$ satisfies the Carleson
packing condition:
$$\sum_{Q'\subseteq Q, \,Q'\in\B} \sigma(Q')
\leq\, c(n,M,\Gamma,\delta,\kappa)\, \sigma(Q)\,,
\quad \forall Q\in \dd(\Sigma)\,.$$
\item  The collection $\G$ is further subdivided into
disjoint stopping time regimes $\{\sbf\}_{\sbf \in \mathcal{F}}$, such that each such regime {\bf S} is coherent.

\item The maximal cubes $Q(\sbf)$ for the  stopping time regimes satisfy the Carleson
packing condition:
$$\sum_{\sbf: Q(\sbf)\subseteq Q}\sigma\big(Q(\sbf)\big)\,\leq\, c(n,M,\Gamma,\delta,\kappa)\, \sigma(Q)\,,
\quad \forall Q\in \dd(\Sigma)\,.$$
\item For each $\sbf$,  there exists a coordinate system\footnote{This means that we identify $P_x \times P_x^\perp \times \mathbb{R}$ with $\mathbb{R}^n \times \mathbb{R} $ for some $t$-independent plane $P \in \mathcal{P}$, see Definition \ref{tindplane.def}.} and a
regular Lip(1,1/2)  function $ \psi_{\sbf}:=\psi = \psi ( x, t ) : \mathbb R^{n-1}\times\mathbb R\to \mathbb R$ with
 parameters $b_1=c(n,M)\cdot\delta$ and $b_2=b_2(n,M,\Gamma)$, such if we define
 $\Sigma_{\sbf}:=\Sigma_\psi:=\{(x,\psi(x,t),t):\ (x,t)\in \mathbb R^{n-1}\times\mathbb R\}$, then
\begin{equation}\label{eq2.2a-RE}
\sup_{(X,t)\in \kappa Q} d_p(X,t,\Sigma_{\sbf} )\,
\leq\, \delta\,\diam(Q),
\end{equation}
for all $Q\in \sbf$.
\end{enumerate}
\end{theorem}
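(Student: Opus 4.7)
The plan is to adapt the David--Semmes corona decomposition construction to the parabolic setting, using the Carleson measure condition \eqref{eq1.sf} on the parabolic $\gamma$ numbers as the fundamental input. For each $Q \in \dd(\Sigma)$, associate a best-approximating $t$-independent plane $P_Q \in \pc$ (up to a fixed multiplicative factor) for $\Delta(X_Q, t_Q, K\ell(Q))$, where $K = K(\kappa)$ is large enough that control of $\Sigma$ on $KQ$ translates into the desired approximation on $\kappa Q$. Declare a cube $Q$ to be \emph{bad} if $\gamma_\Sigma(X_Q, t_Q, K\ell(Q)) > \eta$, where $\eta = \eta(\delta,\kappa)$ is a small threshold to be fixed later. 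Inside each regime we will also stop when $P_Q$ has rotated too far (in the parabolic sense, i.e.\ ignoring the $t$-direction per Definition \ref{tindplane.def}) from the reference plane $P_{Q(\sbf)}$, and the children of such a stopping cube become maximal cubes of new regimes.

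\textbf{Stopping-time regimes and packing.} Producing the partition $\G = \bigsqcup_{\sbf \in \F} \sbf$ is then a standard bookkeeping procedure: starting from a largest good ancestor $Q(\sbf)$, keep $Q \in \sbf$ as long as every $Q'$ with $Q \subseteq Q' \subseteq Q(\sbf)$ has $\gamma_\Sigma(X_{Q'},t_{Q'},K\ell(Q')) \le \eta$ and the spatial angle between $P_{Q'}$ and $P_{Q(\sbf)}$ is at most $\delta/C$; property (c) of Definition \ref{d3.11} is restored by adjoining siblings, at the cost of an absorbable multiplicative factor. The Carleson packing of $\B$ and of $\{Q(\sbf)\}_{\sbf\in\F}$ will follow from combining \eqref{eq1.sf} (which directly handles large-$\gamma$ stopping) with a tilt-control estimate bounding the squared angle change between $P_{Q'}$ and its dyadic parent by the $L^2$ mean of $\gamma^2$ on a slightly dilated region. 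The key technical point here --- precisely the place where \cite{RN1} was shown to go awry --- is that the rotated cubes do not have bounded overlap when $\Gamma$ is merely finite, so we will invoke the David--Semmes counting function machinery of Lemma \ref{count} to reduce the packing to a Carleson-type estimate controlled by $\|\nu\|$.

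\textbf{Construction of $\psi_\sbf$ and proof of \eqref{eq2.2a-RE}.} Within a fixed regime $\sbf$, choose coordinates $(x,y,t) \in \re^{n-1} \times \re \times \re$ aligned with $P_{Q(\sbf)}$, so that $P_{Q(\sbf)} = \{y=0\}$. The function $\psi_\sbf$ will be constructed by a parabolic partition-of-unity average of the affine height functions whose graphs are the planes $\{P_Q\}_{Q \in \sbf}$, with bumps adapted to the projections of $Q$ onto $P_{Q(\sbf)}$; these projections behave well because parabolic Hausdorff measure does not increase under $\pi$ (Remark \ref{r-measures}(iii)). The stopping-time angle bound forces the Lip(1,1/2) constant to be $\le c(n,M)\,\delta$, and a telescoping comparison --- $\Sigma$ to $P_Q$ at scale $\ell(Q)$ via $\gamma(Q)\le\eta$, then $P_Q$ to $\psi_\sbf$ via the accumulated tilts --- will yield \eqref{eq2.2a-RE} once $\eta$ is chosen small enough in terms of $\delta$ and $\kappa$.

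\textbf{Regularity (the main obstacle).} The principal difficulty --- the one genuinely requiring the parabolic refinement rather than a cosmetic modification of \cite{DS1} --- is verifying that $\psi_\sbf$ is \emph{regular}, i.e.\ that $\|D_{1/2}^t \psi_\sbf\|_* \le b_2(n,M,\Gamma)$. In the elliptic case pure Lipschitzness is automatic from averaging, but here the parabolic half-time derivative can fail to lie in BMO even for smooth Lip(1,1/2) functions (cf.\ \cite{KW,LS}); hence the BMO bound must be extracted via a genuine square-function argument. The plan is to push the Carleson measure for $\gamma_\Sigma$ forward to a Carleson measure for $\gamma_{\Sigma_\sbf}$ on the constructed graph (the step carried out in Section \ref{pushsqfn}), and then to invoke the equivalence between such a Carleson condition and $\|D_{1/2}^t\psi_\sbf\|_*\in\mathrm{BMO}$ established in Section 2 of \cite{HLN1}. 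This step is also where the corrected parabolic Poincar\'e-type inequality promised as Lemma \ref{l3} is needed to replace the erroneous estimate identified in \cite{RN1}.
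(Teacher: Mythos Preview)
Your overall architecture matches the paper's: bad cubes via a smallness threshold on a $\beta$-type number, stopping-time regimes with an angle condition, Whitney-type graph construction over $P_{Q(\sbf)}$, and regularity of $\psi_\sbf$ by pushing the $\gamma$-Carleson measure to the graph and invoking \cite{HLN,HLN1}. Two points, however, are genuinely off.

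\textbf{The packing of angle-stopped regimes is not a direct tilt estimate.} Your plan ``bound the squared angle change between $P_{Q'}$ and its dyadic parent by the $L^2$ mean of $\gamma^2$, then sum'' does not control the regimes $\sbf$ whose minimal cubes satisfy $\Ang(P_Q,P_{Q(\sbf)})\ge \delta/2$: the accumulated angle over many generations is a sum of $O(\epsilon)$ increments, and $(\sum a_i)^2$ is not comparable to $\sum a_i^2$. In the paper (as in \cite{DS1}) the packing of these $\sbf\in\mathcal{F}_1$ is \emph{indirect} and \emph{uses the graph already constructed}: one shows that if
\[
\iiint_{E(\sbf)} \gamma(Z,\tau,Kr)^2\,\frac{\d\sigma(Z,\tau)\,\d r}{r}\le \epsilon^2\sigma(Q(\sbf)),
\]
then, after pushing this estimate to $\psi_\sbf$ via Lemma~\ref{count++}, a Calder\'on reproducing-formula decomposition $\psi=\psi_1+\psi_2$ combined with the parabolic Poincar\'e-type bound of Lemma~\ref{l3} forces $\psi$ to be so flat that no minimal cube of $\sbf$ can have rotated by $\delta/2$ --- a contradiction. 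Thus the graph construction and the square-function transfer must \emph{precede} the packing argument, not follow it; the two steps are interlocked, which is exactly the ``insight (and courage)'' the paper attributes to David--Semmes.

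\textbf{The roles of Lemma~\ref{count} and Lemma~\ref{l3} are reversed in your outline.} The counting-function machinery of Lemma~\ref{count} is used inside Lemma~\ref{count+} to push the $\gamma$-square function from $\Sigma$ to $\widetilde\Sigma$; this is what yields both the Carleson control $\hat\nu\lesssim (\epsilon^2+\|\nu\|)\rho^d$ (Lemma~\ref{pushlemma1}) and hence the regularity $\|D^{1/2}_t\psi\|_*\lesssim 1$ (Lemma~\ref{pushlemma2}). The parabolic Poincar\'e inequality (Lemma~\ref{l3}), by contrast, lives in the \emph{packing} step: it bounds the measure of the set where the sharp function $N(\psi_2)$ is large, which is the crux of the contradiction argument for $\mathcal{F}_1$. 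This is precisely where the erroneous Poincar\'e claim of \cite{RN1} (between displays (32) and (33) there) had to be repaired --- in the packing, not the regularity.
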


The second theorem we prove  is a strengthening of Theorem \ref{main} in the sense that it  gives instead a bilateral approximation in  (4) and proves (a) $\implies$ (c) in Theorem \ref{thrm.char}.
\begin{theorem}\label{main.bil.thrm}  Suppose that
 $\Sigma\subset\mathbb R^{n+1}$ is parabolic uniformly rectifiable with constants $(M,\Gamma)$. Let $\delta\ll 1$
and $\kappa\gg 1$ be two given  positive constants. Then there exists a disjoint decomposition
$\dd(\Sigma) = \B^*\cup\G^*$, satisfying the following properties.
\begin{enumerate}
\item The collection $\B^*$ satisfies the Carleson
packing condition:
$$\sum_{Q'\subseteq Q, \,Q'\in\B^*} \sigma(Q')
\leq\, c(n,M,\Gamma,\delta,\kappa)\, \sigma(Q)\,,
\quad \forall Q\in \dd(\Sigma)\,.$$
\item  The collection $\G^*$ is further subdivided into
disjoint stopping time regimes $\{\sbf^*\}_{\sbf^* \in \mathcal{F}^*}$, such that each such regime $\sbf^*$ is coherent.

\item The maximal cubes $Q(\sbf^*)$ for the stopping time regimes satisfy the Carleson
packing condition:
$$\sum_{\sbf^*: Q(\sbf^*)\subseteq Q}\sigma\big(Q(\sbf^*)\big)\,\leq\, c(n,M,\Gamma,\delta,\kappa)\, \sigma(Q)\,,
\quad \forall Q\in \dd(\Sigma)\,.$$
\item For each $\sbf^*$,  there exists a coordinate system and a
regular  Lip(1,1/2)  function $ \psi_{\sbf^*}:=\psi = \psi ( x, t ) : \mathbb R^{n-1}\times\mathbb R\to \mathbb R$ with
 parameters $b_1=c(n,M)\cdot\delta$ and $b_2=b_2(n,M,\Gamma)$, such if we define
 $\Sigma_{\sbf^*}:=\Sigma_\psi:=\{(x,\psi(x,t),t):\ (x,t)\in \mathbb R^{n-1}\times\mathbb R\}$, then
\begin{align*}
\sup_{(X,t)\in \kappa Q} d_p(X,t,\Sigma_{\sbf^*} )\, + \,  \sup_{(Y,s)\in C_{\kappa \diam(Q)}(X_Q,t_Q) \cap \Sigma_{\sbf^*}} d_p(Y,s, \Sigma) \,
\leq\, \delta\,\diam(Q),
\end{align*}
for all $Q\in \sbf^*$.
\end{enumerate}
\end{theorem}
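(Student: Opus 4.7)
The plan is to prove Theorem~\ref{main.bil.thrm} by re-running the stopping-time construction from the proof of Theorem~\ref{main} with an additional stopping rule for bilateral failure, and by verifying Carleson packing for these new stopping cubes using the big-pieces-squared property (implication (a)$\Rightarrow$(d) of Theorem~\ref{thrm.char}, established in \cite{BHHLN1}). This parallels the elliptic argument in \cite{HMM1} and is the strategy flagged in the introduction, namely ``re-running the machine'' of Theorem~\ref{main} with the additional geometric input gleaned from (b)$\Rightarrow$(d).

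More precisely, I would rerun the stopping-time construction of Theorem~\ref{main} with tightened parameters $(\delta_0,\kappa_0)=(\delta/C,C\kappa)$ for $C=C(n,M)$ large, supplemented by one additional stopping rule. At each step, in addition to the existing conditions that determine a regime $\sbf^*$ with approximating graph $\Sigma_{\sbf^*}$, I declare \emph{bilateral stopping} at any cube $Q$ at which
\[
\sup_{(Y,s)\in C_{\kappa\diam(Q)}(X_Q,t_Q)\cap \Sigma_{\sbf^*}} d_p(Y,s,\Sigma) \, > \, \tfrac{1}{2}\delta\diam(Q),
\]
excluding $Q$ and its descendants from $\sbf^*$ and restarting the construction at $Q$ with a fresh approximating graph. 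The resulting $\sbf^*$ is coherent by construction, inherits the unilateral half of condition~(4) from the unmodified construction, and by design satisfies the bilateral half of~(4) with thresholds $(\delta,\kappa)$. Let $\mathcal{F}_{\text{bil}}$ denote the union of all bilateral stopping cubes across all regimes.

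The main obstacle is then the Carleson packing
\[
\sum_{Q\in\mathcal{F}_{\text{bil}},\,Q\subseteq Q_0}\sigma(Q)\lesssim\sigma(Q_0),\qquad Q_0\in\dd(\Sigma),
\]
which, together with the packings produced by the unmodified stopping times, yields properties~(1) and~(3) of Theorem~\ref{main.bil.thrm}. To establish it, each $Q\in \mathcal{F}_{\text{bil}}$ produces a parabolic \emph{void ball} $B_Q$ of radius $\sim\delta\diam(Q)$ centered on $\Sigma_{\sbf^*}$ with $B_Q\cap\Sigma=\emptyset$, so that $\sigma_{\Sigma_{\sbf^*}}(B_Q)\sim\sigma(Q)$ by parabolic ADR of $\Sigma_{\sbf^*}$. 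I would then invoke (a)$\Rightarrow$(d): at any location and scale in $Q_0$ there is a regular Lip(1,1/2) graph $G$ of which $\Sigma$ fills a definite fraction and on which, conversely, $\Sigma$ has big pieces at every sub-scale. Using the unilateral estimate to identify $\Sigma_{\sbf^*}$ with $G$ up to parabolic error $\delta_0\diam(Q(\sbf^*))$, the voids $B_Q$ translate into regions of $G$ separated from $\Sigma$ by $\gtrsim\delta\diam(Q)$, and an overabundance of such voids would contradict the big-pieces-at-every-scale clause for $G$. The hardest step, which I expect to be the true obstacle, is the bookkeeping across scales and across distinct regimes $\sbf^*$ (whose graphs need not align): voids at nested scales may overlap geometrically and correspond to different graphs, so the packing must be extracted without double-counting. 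I would handle this along the lines of the counting-function argument of Lemma~\ref{count}, which is used in the proof of Theorem~\ref{main} for precisely the analogous overlap issue.
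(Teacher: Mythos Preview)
Your proposal differs from the paper's route and contains two genuine difficulties.

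First, there is a circularity in your stopping rule. In the construction of Theorem~\ref{main}, the graph $\Sigma_{\sbf^*}$ is built \emph{after} the regime $\sbf^*$ is fixed: the Whitney cubes $I_i$, the affine pieces $B_i$, and hence $\psi_{\sbf^*}$ all depend on the full collection of minimal cubes of $\sbf^*$ (via the function $D=D_{\sbf^*}$ in \eqref{6-}). You therefore cannot use a criterion involving $\Sigma_{\sbf^*}$ to decide, during the descent, whether to stop. One could repair this by first running the unmodified construction to obtain each $\sbf$ and its graph $\Sigma_{\sbf}$, and only then subdividing $\sbf$ according to bilateral failure of $\Sigma_{\sbf}$; but this reorganization is not indicated in your plan.

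Second, and more seriously, your packing argument for $\mathcal{F}_{\text{bil}}$ does not go through as written. The step ``using the unilateral estimate to identify $\Sigma_{\sbf^*}$ with $G$ up to parabolic error $\delta_0\diam(Q(\sbf^*))$'' is unjustified: the unilateral estimate says $\Sigma$ is close to $\Sigma_{\sbf^*}$, and BP$^2$ says $\Sigma$ has a big piece of (big pieces of) some graph $G$, but neither forces $G$ and $\Sigma_{\sbf^*}$ to agree \emph{in the void regions}, which is exactly where you need them to agree. The analogy with Lemma~\ref{count} is also misplaced: that lemma controls overlaps of the cubes $Q(i)$ within a \emph{single} regime projected to a \emph{single} plane, whereas your voids live on many different graphs $\Sigma_{\sbf^*}$ attached to different regimes.

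The paper's argument sidesteps both problems by working with \emph{planes} rather than graphs at the stopping stage. It uses Theorem~\ref{URimpPBWGL} (the parabolic bilateral weak geometric lemma, obtained from Theorem~\ref{main} together with \cite{BHHLN1}) to define $\mathcal{B}^*=\{Q: b\beta_\infty(Q,K)\ge\epsilon\}$, which packs immediately. The regimes $\sbf$ from Theorem~\ref{main} are then refined inside $\mathcal{G}^*$ by stopping whenever a cube or a sibling falls into $\mathcal{B}^*$ (Lemma~\ref{BWGLcorona.lem}); the packing of the new maximal cubes comes from the packing of $\mathcal{B}^*$ and of the old maximal cubes, with no void-counting needed. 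Only \emph{after} the refined regimes $\sbf^*$ are fixed does one rebuild the approximating graphs, using that the graph construction requires only condition~(A) of \eqref{dico}. The bilateral estimate in (4) is then a short computation: for $Q\in\sbf^*$ the plane $P_Q$ is bilaterally $\epsilon$-close to $\Sigma$, and $\Sigma_{\sbf^*}$ is a small-constant Lip(1,1/2) graph over $P_Q$; combining this with Lemma~\ref{closegraph.lem} gives \eqref{bilgoal.eq}.
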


\subsection{The proof of Theorem \ref{main}}\label{s4}
We here briefly discuss the outline of the proof of Theorem \ref{main} to be presented in subsequent sections.

 In the sequel,
 $\Sigma\subset\mathbb R^{n+1}$ is parabolic uniformly rectifiable with constants $(M,\Gamma)$.
Throughout the proof, four parameters will consistently appear: $K$, $\epsilon$, $\delta$ and $\kappa$. The parameters $\delta$ and $\kappa$ are exactly as in the statement of Theorem \ref{main}.
$K$ and $\epsilon$ are auxiliary parameters. Both $\epsilon$ and $\delta$ will be small, $\epsilon\ll\delta\ll 1$, and $\epsilon$ is chosen/defined in \eqref{geo9-}. This implies that
$\epsilon=\epsilon(n,M,\Gamma,\kappa,K,\delta)$. $K$ and $\kappa$ will be large, $K\gg\kappa\gg 1$, and to achieve that the constants in Theorem \ref{main} only depend on $n,M,\Gamma,\delta$ and $\kappa$, we will enforce the relation $K/\kappa = c(n,M)\gg 1$.

For $K \gg\kappa\gg 1$ fixed, we define,  for each $Q \in \mathbb{D}(\Sigma)$,
$$\mbox{$\beta_\infty(Q):= \beta_\infty(Q,K)$ and $\beta_2(Q):= \beta_2(Q,K)$,}$$
 where
\begin{equation}\label{3}
\beta_\infty(Q,K) :=  \inf_{P \in \mathcal{P}} \diam(Q)^{-1} \sup_{\{(Y,s)\in 8KQ\}} d_p(Y,s,P)\,,
\end{equation}
and
\begin{equation}\label{4}
\quad\quad\quad\beta_2(Q,K) := \inf_{P \in\mathcal{P}} \left(\diam(Q)^{-d} \iint_{8KQ} \biggl (\frac{ d_p(Y,s,P)}{\diam(Q)}\biggr )^2 \,d\sigma(Y,s)\right)^{1/2}.
\end{equation}
In the following we drop $K$ to ease the notation. Note that
\begin{equation}\label{5}
\beta_\infty(Q)^{n+3}\lesssim \beta_2^{2}(Q^*),
\end{equation}
where $Q^*$ is an appropriately chosen ancestor to $Q$ such that  $$100c\diam(Q) \le \diam(Q^*) \lesssim \diam(Q)$$ for $c=c(n,M)\gg 2$ to be chosen. Indeed, let $ \ti P $ be a plane realizing the infimum in
the definition of $\beta_2(Q^*)$. Clearly  there exists $ (Z_1, \tau_1 )\in 8KQ$ with
\begin{equation*} \rho := \diam(Q) \beta_\infty(Q)\leq d_p( Z_1, \tau_1 , \ti P ).
\end{equation*}
We have $ \si ( \De ( Z_1, \tau_1, \rho/8) )\gtrsim \rho^{d}$ as $\Sigma$ is parabolic ADR.  Moreover every
 point in $ \De ( Z_1, \tau_1, \rho/8) $ is contained in
$8cKQ $, for some $c=c(n,M)\gg 2$, and lies at least $ \rho/2 $ from $ \ti
P. $  Thus, \eqref{5} is valid.

Using that $\nu$ is a Carleson measure, a standard argument (see e.g. \cite[Lemma 4.8]{Rigot}) shows that
\begin{equation}\label{beta2cmest.eq}
\sum_{Q \subset Q^*} \beta_2^2(Q) \sigma(Q) \lesssim \sigma(Q^*), \quad  Q^* \in \mathbb{D}(\Sigma),
\end{equation}
with implicit constant depending on $K, n, M$ and $\Gamma$.

To this end, we define
\begin{align*}
\mathcal{B} &:= \mathcal{B}(\epsilon,K) := \{Q \in \mathbb{D}(\Sigma): \beta_\infty(Q) \ge \epsilon\},\\
\mathcal{G} &:= \mathcal{G}(\epsilon, K) = \mathbb{D}(\Sigma) \setminus  \mathcal{B}(\epsilon,K).
\end{align*}
Then \eqref{5} and \eqref{beta2cmest.eq} imply that
\begin{equation}\label{bad-pack}
\sum_{\substack{Q \subseteq {\widetilde{Q}}\\ Q \in \mathcal{B}}}  \sigma(Q)
\lesssim \sigma( \widetilde{Q}), \quad  \widetilde{Q} \in \mathbb{D}(\Sigma),
\end{equation}
where the implicit constant now depends on $\epsilon, K, n, M$ and $\Gamma$.

For $Q \in \mathbb{D}(\Sigma)$ we let $P_Q \in \mathcal{P}$ be a $t$-independent plane which achieves the infimum in the definition of $\beta_\infty(Q)$. Then, by the definition of $\mathcal{G}$,
\begin{equation}\label{3++}
 d_p(Y,s,P_{Q})\leq\epsilon\diam(Q)\mbox{ for all }(Y,s)\in 8KQ,
\end{equation}
and for all $Q \in \mathcal{G}$.

Following \cite[Section 7]{DS1} (or equivalently,
\cite[Lemma IV.2.35]{DS2}) we may
construct an augmented collection $\mathcal{B}'\supset \mathcal{B}$, and a
refined collection
$\mathcal{G}' \subset \mathcal{G}$, for which we have a
disjoint decomposition of the dyadic cubes
$\mathbb{D}(\Sigma) = \mathcal{B}' \cup \mathcal{G}'$. Moreover, $\mathcal{B}'$
still satisfies the packing condition \eqref{bad-pack},
and $\mathcal{G}'$ may be partitioned into
 a collection of coherent
stopping time regimes $\{\sbf\}_{\sbf \in \mathcal{F}}$, with maximal cubes $\{Q(\sbf)\}_{\sbf \in \mathcal{F}}$,
with the following properties:
\begin{align}\label{dico}
&\mbox{(A) If $Q \in \sbf$, then $\Ang(P_Q, P_{Q(\sbf)}) \le \delta$.}\notag\\
&\mbox{(B) If $Q$ is a minimal cube of $\sbf$, then either a child of $Q$ is in $\mathcal{B}'$}\notag\\
&\quad\quad\mbox{or $\Ang(P_Q,P_{Q(\sbf)}) \ge \delta/2$.}
\end{align}

Given a stopping time regime $\sbf$, we define $m(\sbf)$ to be the collection of minimal cubes in $\sbf$.

Following \cite{DS1}, we will construct a Lip(1,1/2) graph associated to each stopping time regime. After doing so, proving Theorem \ref{main} is a matter of demonstrating that the graph is, in fact, a {\it regular} Lip(1,1/2) graph and  that the maximal cubes $\{Q(\sbf)\}$ pack. These objectives will be achieved in a somewhat simultaneous manner in a way similar to \cite{DS1}. The insight (and courage) of David and Semmes in providing such a proof is quite remarkable.

The proof of Theorem \ref{main} is divided up into three sections:
in Section \ref{subsgraphconst}, we construct a Lip(1,1/2) graph with 
small constant for each stopping time regime.
In Section \ref{pushsqfn}, we push the geometric 
square function estimate (the Carleson measure estimate for $\nu$) to the graph. 
The control of the geometric square function on the graph will verify, by \cite{HLN,HLN1}, that the graph is indeed regular.
In Section \ref{packsttime}, we use the fact that we pushed the geometric square function in a sharp manner to pack the stopping time regimes.

Our argument will also rely on two simple lemmas concerning the approximation of surface cubes by planes, these lemmas can be found in Appendix \ref{subsplanes}. The construction of the graph relies on condition  \eqref{dico} (A) and that the geometric lemma holds for $\Sigma$. It is also worth  making the remark that we only use condition  \eqref{dico} (B)  above when packing the stopping time regimes in Section \ref{packsttime}.

\section{Construction of  Lip(1,1/2) graphs for stopping time regimes}\label{subsgraphconst}
In this section we shall follow very closely the arguments in \cite[Chapter 8]{DS1} to 
construct, for $\sbf \in \mathcal{F}$ fixed, a Lip(1,1/2) graph which approximates $Q \in \sbf$ as in Theorem \ref{main}(4). We can without loss of generality assume that 
$$P_{Q(\sbf)} := \{(x,x_n, t) \in \mathbb{R}^{n-1} \times \mathbb{R} \times \mathbb{R}: x_n = 0\}.$$
We will often identify $P_{Q(\sbf)}$ with $\mathbb{R}^n$ and employ the notation $C'_r(x,t)$ introduced in \eqref{cube2}  for a $n$-dimensional parabolic cube in $P_{Q(\sbf)}$. We let $\pi$ denote the projection onto $P_{Q(\sbf)}$, that is, $\pi(x,x_n,t) = (x,t) \in \rn$. We let $\pi^{\perp}$ denote the projection onto the normal to $P_{Q(\sbf)}$, that is, $\pi^\perp(x,x_n,t) = x_n$. Given $Q \in \mathbb{D}(\Sigma)$ we let $(x_Q, t_Q): = \pi(X_Q,t_Q)$ denote the projected center of $Q$.

Given $\sbf\in\mathcal{F}$, and following \cite{DS1}, we introduce distance functions adapted to the stopping regime. Indeed, if $(X,t)\in\mathbb R^{n+1}$ then we define
$$d(X,t)=d_{\sbf}(X,t):=\inf_{Q\in\sbf}[d_p(X,t,Q)+\mbox{diam}(Q)],$$
and we let $D=D_{\sbf} : \mathbb R^n\to [0,\infty)$ be
defined as
\begin{align}\label{6-}D(x,t):=\inf_{(X,t)\in \pi^{-1}(x,t)}d(X,t)=\inf_{Q\in\sbf}[d_p(x,t,\pi(Q))+\mbox{diam}(Q)].
\end{align}
In the sequel, we set
\[ F:=\big\{(X,t)\in \Sigma: d(X,t)=0\big\}\,.\]

\begin{lemma}\label{lem1} Let $\sbf\in\mathcal{F}$ and let $P_{ Q(\sbf)}=\mathbb R^n$.
Then $\pi$ is one-to-one on $F$ and we can define a function $\hat\psi$ on $\pi(F)\subset \mathbb R^n$ as follows. Given
$(X,t)=(x,x_n,t)\in F$, we set  $\hat\psi(x,t):=\pi^\perp(X,t)=x_n$.  Then 
$\hat \psi$ is a well-defined Lip(1,1/2) function with constant bounded by $2\delta$ on $\pi(F)$, i.e.,
\begin{eqnarray}\label{hatlip}
|\hat \psi(x,t)-\hat \psi(y,s)|\leq 2\delta d_p(x,t,y,s),
\end{eqnarray}
whenever $(x,t), (y,s)\in \pi(F)$.  More generally, suppose that 
$(X,t),\ (Y,s)\in 16 \kappa Q(\sbf)$, and that for some $N\geq1$, we have $\min\{d(X,t),d(Y,s)\} \le \,N
d_p(X,t, Y,s)$; then
\begin{align}\label{6}
|\pi^\perp(X,t)-\pi^\perp(Y,s)|\leq 2\delta d_p\big(\pi(X,t),\pi(Y,s)\big)\,,
\end{align}
provided that $\epsilon \ll \delta$ is small enough, depending on $N$, and $K\gg 1$ large enough, 
depending on $N$ and $\kappa$. ({\tt Remark:   in the sequel, we apply the lemma 
 with $N$ equal to an absolute constant}.)
\end{lemma}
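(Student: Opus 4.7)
The general statement (\ref{6}) actually subsumes both the injectivity of $\pi|_F$ and the Lip(1,1/2) bound on $\hat\psi$: if $(X,t), (Y,s) \in F$, then $d(X,t) = d(Y,s) = 0$, so the hypothesis of (\ref{6}) holds for $N = 1$. Injectivity is then immediate, since $\pi(X,t) = \pi(Y,s)$ forces $|\pi^\perp(X,t) - \pi^\perp(Y,s)| \le 2\delta \cdot d_p(\pi(X,t),\pi(Y,s)) = 0$; the Lipschitz bound (\ref{hatlip}) is exactly (\ref{6}) restricted to $F$. Thus the whole lemma is reduced to proving (\ref{6}).

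\textbf{Choice of approximating cube $Q'$.} Write $r := d_p((X,t),(Y,s))$, and assume without loss of generality that $d(X,t) \le Nr$. By definition of $d$, pick $Q\in\sbf$ with $d_p((X,t), Q) + \diam(Q) \le 2Nr$. Since $\sbf$ is coherent (Definition \ref{d3.11}), every ancestor of $Q$ that is contained in $Q(\sbf)$ belongs to $\sbf$; let $Q'$ be the smallest such ancestor of $Q$ in $\sbf$ satisfying $\diam(Q') \ge r$, or else set $Q' = Q(\sbf)$ if no such ancestor exists. Then $\diam(Q') \sim \max(r, \diam(Q))$ up to factor $2$, and the triangle inequality combined with $(X,t),(Y,s) \in 16\kappa Q(\sbf)$ yields
\[ d_p\big((X,t), X_{Q'}\big) + d_p\big((Y,s), X_{Q'}\big) \le C(N,\kappa)\,\diam(Q'), \]
so $(X,t), (Y,s) \in 8KQ'$ provided $K \ge C(N,\kappa)$, which is arranged by our standing assumption $K \gg \kappa \gg 1$.

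\textbf{Flatness at $Q'$ and passage to a graph.} Since $Q'\in\sbf\subset\G$, the bound (\ref{3++}) gives a $t$-independent plane $P_{Q'}\in\mathcal{P}$ with $d_p((Z,\tau), P_{Q'}) \le \epsilon\,\diam(Q')$ for all $(Z,\tau) \in 8KQ'$. Condition (A) of (\ref{dico}) then yields $\Ang(P_{Q'}, P_{Q(\sbf)}) \le \delta$, so using $P_{Q(\sbf)} = \{x_n = 0\}$ we may represent $P_{Q'}$ as $\{(x,L(x),t)\}$ for an affine function $L : \mathbb{R}^{n-1} \to \mathbb{R}$ with $|\nabla L| \le 2\delta$. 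Because $P_{Q'}$ is $t$-independent, the nearest-point calculation in the parabolic metric matches the time coordinate, giving $d_p((X,t),P_{Q'}) \ge \tfrac{1}{2}|x_n - L(x)|$, and similarly for $(Y,s)$. Hence
\begin{align*}
|x_n - y_n| \le |x_n - L(x)| + |L(x) - L(y)| + |L(y) - y_n| \le 2\delta\,|x-y| + C\epsilon\,\diam(Q').
\end{align*}

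\textbf{Case split and absorption.} Recall $\diam(Q') \le C(N,\kappa)\, r$ with $r = |X-Y| + |t-s|^{1/2}$. If $|X-Y| \le 2|x-y|$ then $r \le 2\,d_p(\pi(X,t),\pi(Y,s))$, and since $\epsilon \ll \delta$ (see \eqref{geo9-}) the displayed inequality gives the desired bound with room to spare. Otherwise $|X-Y| \le 2|x_n - y_n|$, whence $r \le 2|x_n - y_n| + |t-s|^{1/2}$; plugging this in yields
\[ (1 - 2C\epsilon\,C(N,\kappa))\,|x_n - y_n| \le 2\delta\,|x-y| + C\epsilon\,C(N,\kappa)\,|t-s|^{1/2}, \]
and choosing $\epsilon$ small enough (depending on $N$ and $\kappa$) so that $2C\epsilon\,C(N,\kappa) \le \tfrac{1}{2}$ and $2C\epsilon\,C(N,\kappa) \le \delta$ delivers $|x_n - y_n| \le 2\delta\,d_p(\pi(X,t),\pi(Y,s))$, completing the proof of (\ref{6}). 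The main technical obstacle is the bookkeeping that pins down the ancestor $Q'$ lying in $\sbf$ and simultaneously containing both points in $8KQ'$; this is precisely what coherence and the hierarchy $K \gg \kappa \gg N$ are for. A secondary issue is that the estimate on $|\nabla L|$ via $\Ang(P_{Q'}, P_{Q(\sbf)})\le \delta$ and the absorption in Case B both need the smallness $\epsilon \ll \delta$ that the parameter choice in \eqref{geo9-} provides.
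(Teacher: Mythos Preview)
Your proof is correct and follows essentially the same route as the paper's: reduce everything to \eqref{6}, pick a cube $Q'\in\sbf$ at scale $\sim d_p((X,t),(Y,s))$ (using coherence to pass to an ancestor), use $\beta_\infty(Q')\le\epsilon$ to put both points within $\epsilon\diam(Q')$ of $P_{Q'}$, and finish via the angle bound $\Ang(P_{Q'},P_{Q(\sbf)})\le\delta$. The paper's proof is terser---it simply asserts that \eqref{6} ``follows since the angle between $P_{Q(\sbf)}$ and $P_Q$ is at most $\delta$''---whereas you spell out the graph representation of $P_{Q'}$ and the absorption argument explicitly; this extra care is fine, though note that your case-split constants give something like $C\delta$ rather than exactly $2\delta$ (which is harmless downstream, and indeed the paper's own argument has the same feature once unpacked).
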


\begin{proof} Clearly, it suffices to prove \eqref{6}.
The proof follows that of \cite[Lemma 8.4]{DS1}. Let 
$(X,t),\ (Y,s)\in 16 \kappa Q(\sbf)$ and without loss of generality, suppose 
that $d(X,t) \le N
d_p(X,t, Y,s)$. 
Let $Q\in \sbf$ be such that
\begin{align}\label{6a}d_p(X,t,Q)+\mbox{diam}(Q)\leq 2N d_p(X,t,Y,s).
\end{align}
We can, if necessary, replace
$Q$ by one of its ancestors (which is still in $\sbf$) to obtain 
$\kappa^{-1} d_p(X,t,Y,s)\lesssim  \diam Q\lesssim N d_p(X,t,Y,s)$.  
By construction, $Q \in \mathcal{G}$ and hence 
$\beta_\infty(Q)\leq\epsilon$, so for  $K=K(N,\kappa)$ large enough,
there is a plane $P_Q$ such that
$$ d_p(X,t,P_Q)+ d_p(Y,s,P_Q)\lesssim  \epsilon \diam Q \lesssim
 \epsilon N d_p(X,t,Y,s)\ll \delta  d_p(X,t,Y,s),$$
as $\epsilon\ll \delta$. 
The conclusion 
in \eqref{6} now follows since, by construction of the stopping time, the angle between 
$P_{Q(\sbf)}$ and $P_Q$ is at most $\delta$.  In particular, the reader can verify this claim using elementary geometry
and the fact that $\sin^2(\delta) \approx \delta^2$ for $\delta$ small.
\end{proof}

With Lemma \ref{lem1} as the starting point we next construct a function  $\psi$ on $\mathbb R^n=P_{ Q(\sbf)}$ which coincides with $\hat\psi$ on $\pi(F)$. We will
extend ${\hat\psi}$ off $\pi( F)$ using an appropriate Whitney type extension following the corresponding construction in \cite{DS1}. For $(x,t)\in  \mathbb R^n$ and $(x,t)$ 
not on the boundary of any dyadic cube, let $I_{(x,t)}$ be the largest (closed)
dyadic cube in
$\mathbb R^n$ containing $(x,t)$ and satisfying
$$\diam(I_{(x,t)})\leq \frac 1 {20}\inf_{(z,\tau)\in I_{(x,t)}}D(z,\tau).$$
Let $\{I_i\}$ be a labelling of the set of all these cubes $I_{(x,t)}$ without repetition. By construction the collection
$\{I_i\}$ consists of pairwise non-overlapping closed dyadic cubes in $\mathbb R^n$. These cubes cover $ \mathbb R^{n} \sem  \pi( F) $ and they do not intersect
$\pi( F)$.  For the construction we need the following, whose proof 
follows that of \cite[Lemma 8.7]{DS1}, essentially verbatim.
\begin{lemma}\label{10-60lemma}
Given $I_i$ we have
\begin{align}\label{10-60.eq}
10 \diam I_i \leq D(y,s) \leq 60 \diam I_i \indent \forall (y,s) \in 10I_i.
\end{align}
In particular, if  $10I_i \cap 10I_j \neq \emptyset$, then
\begin{align}\label{eq1}
\diam I_i \approx \diam I_j.
\end{align}
\begin{proof}
Fix $i$ and let $(y,s) \in I_i$.  As $D(\cdot,\cdot)$ is Lip$(1,1/2)$ with norm $1$, we have
\[D(y,s) \geq \inf_{(x,t)\in I_i} D(x,t) - 10\diam I_i \geq 10 \diam I_i,\]
where we used that $\diam I_i \leq 20^{-1} \inf_{(x,t)\in I_i} D(x,t)$.  On the other hand, if $I$ is the dyadic parent of $I_i$, then there exists $(z,\tau) \in I$ such that $D(z,\tau) <20 \diam I = 40 \diam I_i$.  Thus,
\[D(y,s) \leq D(z,\tau) + 20 \diam I_i \leq 60\diam I_i.\]
This proves \eqref{10-60.eq}, and hence also \eqref{eq1}.
\end{proof}
\end{lemma}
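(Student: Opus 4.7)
The plan is to establish \eqref{10-60.eq} by exploiting two facts: first, that the function $D$ defined in \eqref{6-} is Lip$(1,1/2)$ with constant $1$ (as the infimum of a family of such functions $(x,t)\mapsto d_p(x,t,\pi(Q))+\diam(Q)$); and second, the Whitney-type selection rule for the cubes $I_i$, together with its failure for the dyadic parent $I$ of $I_i$. Once \eqref{10-60.eq} is in hand, the comparability \eqref{eq1} will follow immediately.

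For the lower bound, I would fix a reference point $(x,t) \in I_i$ and use the defining property $\diam I_i \leq \tfrac{1}{20}\inf_{I_i}D$, so that $D(x,t) \geq 20 \diam I_i$. For any $(y,s) \in 10 I_i$ we have $d_p(y,s,x,t) \lesssim 10 \diam I_i$ (the $n$-dimensional parabolic cubes $10 I_i$ have diameter comparable to $10\diam I_i$), and hence by the Lipschitz property,
\[
D(y,s) \;\geq\; D(x,t) - d_p(y,s,x,t) \;\geq\; 20\diam I_i - 10\diam I_i \;=\; 10\diam I_i.
\]
(If the constants are tight, I would verify that the precise definition of $10 I_i$ and the normalization of $d_p$ give exactly the factor $10$; otherwise the constants $20$ and $60$ would need minor adjustment, but the shape of the estimate is the same.)

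For the upper bound, let $I$ denote the dyadic parent of $I_i$, so $\diam I = 2\diam I_i$. By maximality of $I_i$ among dyadic cubes satisfying the stopping condition, the parent $I$ violates it: there exists $(z,\tau) \in I$ with $D(z,\tau) < 20 \diam I = 40 \diam I_i$. For $(y,s) \in 10 I_i$, the distance $d_p(y,s,z,\tau)$ is controlled by a multiple of $\diam I_i$, so
\[
D(y,s) \;\leq\; D(z,\tau) + d_p(y,s,z,\tau) \;\leq\; 40\diam I_i + c\,\diam I_i,
\]
and adjusting the numerology yields the bound $60\diam I_i$.

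Finally, for \eqref{eq1}, I would pick a point $(y,s) \in 10 I_i \cap 10 I_j$ and apply \eqref{10-60.eq} twice at this point (once for $I_i$, once for $I_j$) to obtain $10\diam I_i \leq D(y,s) \leq 60 \diam I_j$ and symmetrically. This yields $\diam I_i \leq 6 \diam I_j \leq 36\diam I_i$, i.e.\ $\diam I_i \approx \diam I_j$. I do not expect any serious obstacle here: the entire argument is the standard Whitney extension estimate, and the only subtlety is being careful with the parabolic diameter of $10 I_i$ versus $\diam I_i$; this is a routine constant-chasing check.
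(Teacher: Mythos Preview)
Your proposal is correct and follows essentially the same approach as the paper's proof: both arguments use the Lip$(1,1/2)$ property of $D$ together with the Whitney stopping condition for $I_i$ (and its failure for the dyadic parent) to obtain the two-sided bound, and then deduce \eqref{eq1} by evaluating at a common point of $10I_i\cap 10I_j$. Your caveat about constant-chasing is apt but inessential; the paper's argument is the same with slightly terser bookkeeping.
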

We continue to follow the constructions in \cite[Chapter 8]{DS1}.   Set
\begin{align}\label{nota2}R=R_{\sbf}:=\diam ( Q(\sbf)).
\end{align}
Given the center $(X_{ Q(\sbf)},t_{ Q(\sbf)})$ of $ Q(\sbf)$, we recall that
$$(x_{ Q(\sbf)},t_{ Q(\sbf)})=\pi(X_{ Q(\sbf)},t_{ Q(\sbf)}),$$ and we introduce the index set
\begin{equation}\label{la-def}
 \La \, = \{ i :
 I_i \cap C'_{2\kappa R}(x_{ Q(\sbf)},t_{ Q(\sbf)}) \not = \emptyset \}.
\end{equation}

We claim, for each $i \in \Lambda$, that there exists $Q(i) \in \sbf$ such that
\begin{align}\label{eq1+}
\kappa^{-1} \diam I_i \lesssim  \diam Q(i) \leq 120 \diam I_i, \quad d_p(\pi(Q(i)),I_i) \leq 120 \diam I_i.
\end{align}
To see this, note that given $I_i$ and $(x,t)\in I_i$, there exists a cube $Q\in \sbf$ such that
$d_p(x,t,\pi(Q))+\mbox{diam}(Q)\leq 2D(x,t)\sim \diam (I_i)$.
Moreover, $D(x,t) \le c \kappa \diam(Q(\sbf))$, for a constant $c\geq 1$ 
depending only on $n$, $M$, and 
\[
d_p(x,t,\pi(Q^*)) \le 2D(x,t)\leq 120 \diam I_i\,,
\] 
for all $Q  \subseteq Q^* \subseteq Q(\sbf)$. Therefore we can choose $Q(i)$ so that  $Q  \subseteq Q(i) \subseteq Q(\sbf)$ and
\begin{equation}\label{IiQi.eq}
(c \kappa)^{-1}D(x,t) \le \diam(Q(i)) \le 2D(x,t) \leq 120 \diam I_i\,,
\end{equation}
provided $\kappa$ is chosen large enough.

In the sequel, we will, at instances, use the notation
\begin{align}\label{nota1}r_i:=\diam I_i,\,\, \text{ so that } \,  r_i/\kappa \lesssim
\diam Q(i) \lesssim r_i,
\end{align}
by \eqref{eq1+}.

Let $ \{ \tilde \nu_i \} $ be a class of infinitely differentiable functions on $\mathbb R^{n}$ such that
\begin{align*}
 (i)&\ \tilde \nu_i  \equiv 1  \mbox{ on }  2I_i
 \mbox{ and }   \tilde \nu_i \equiv 0  \mbox{ in } \mathbb R^{n} \sem 3I_i \mbox{ for all } i,\notag\\
 (ii)&\ r_i^{ l} \, \left| \frac{\partial^l}{\partial x^l }  \tilde \nu_i \right|
\, + \,    r_i^{ 2 l}  \left| \frac{\partial^l}{\partial t^l } \tilde \nu_i \right|  \,  \leq
\, c ( l, n ) \mbox{ for } l = 1, 2, \dots.
\end{align*}
In ($ii$), $
\frac{\ar^l}{\ar x^l } $ denotes an arbitrary partial derivative with respect to the space variable $ x $ and of order $l$.  {We also introduce
$ \{ \nu_i \} $ defined as a partition of unity adapted to $ \{ 2I_i \}_{i}$, i.e.,
$$\nu_i(x,t)=\tilde \nu_i(x,t)\big (\sum_{j} \tilde \nu_j(x,t)\big)^{-1}.$$}
Then $ \{  \nu_i \} $  is also a class of infinitely differentiable functions on $\mathbb R^{n}$ and
\begin{eqnarray}\label{2.16+}
r_i^{  l} \, \left| \frac{\partial^l}{\partial x^l }  \nu_i \right|
\, + \,    r_i^{  2 l}  \left| \frac{\partial^l}{\partial t^l }  \nu_i \right|  \,  \leq
\, c ( l, n ) \mbox{ for } l = 1, 2, \dots.
\end{eqnarray}

We now ready to construct the extension of $  \hat \psi  $ off $ \pi( F) $. {If $i \in \Lambda$, then for $I_i$} we have an associated dyadic cube $Q(i)\in \sbf$ as in \eqref{eq1+}, we first express the associated hyperplane $P_{Q(i)}$ as
\begin{align}\label{2.19-} P_{Q(i)}=\{ (x, B_i  ( x,t),t ) : ( x,t) \in \mathbb R^n \},
\end{align}
i.e., $B_i:\mathbb R^n\to \mathbb R$ is the affine function whose graph is $P_{Q(i)}$. {If $i \not \in \Lambda$, set $B_i = 0$ and we note, in this case, $B_i$ is the affine function whose graph is $P_{Q(\sbf)}$.} Using this notation we let
\begin{align}\label{2.19}
\psi ( x,t)&=\hat \psi ( x,t)\mbox{ when } ( x,t) \in \pi (
F),\notag\\
\psi ( x,t) &=
 { \ds \sum_{ i } } \,  B_i ( x,t)
\,   \nu_i ( x,t ) \mbox{ when } ( x,t ) \in \mathbb R^n \sem \pi (
F). \end{align}
This defines $\psi $ on $\mathbb R^n$, with $\psi=\hat\psi$ on
$\pi (F)$ and by construction
\begin{equation}\label{supportpsi}
\psi\equiv 0 \text{ on } \mathbb R^n\setminus C'_{4\kappa R}(x_{ Q(\sbf)},t_{ Q(\sbf)}).
\end{equation}
Based on the construction we can prove the following lemma.
\begin{lemma}\label{lemma2.21}
For $ ( y,s ),\  (z, \tau) \in \mathbb R^{n}$ we have
\begin{align*}
(i)&\  | B_i ( y, s ) - B_i ( z, \tau ) | \, \lesssim \delta d_p(y,s,z,\tau) \mbox{ for all } i, \notag\\
(ii)&\  | B_i ( y, s ) - B_j ( y, s ) |\lesssim \epsilon  \min ( r_i, r_j ),  \mbox{ if  }
 ( y, s ) \in 100 I_i \cup 100 I_j ,\   10  I_i  \cap 10  I_j  \not =\emptyset.
 \end{align*}
\end{lemma}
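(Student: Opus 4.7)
The plan is to handle (i) using the stopping-time angle bound \eqref{dico}(A), and then derive (ii) from the $\beta_\infty$ control enjoyed by $Q(i), Q(j) \in \mathcal{G}$ at a common scale. For (i), the case $i \notin \Lambda$ is trivial since $B_i \equiv 0$. When $i \in \Lambda$, one has $Q(i) \in \sbf$ by \eqref{eq1+}, so \eqref{dico}(A) gives $\Ang(P_{Q(i)}, P_{Q(\sbf)}) \leq \delta$. Because $P_{Q(i)} \in \mathcal{P}$ contains a line parallel to the $t$-axis, the graph representation \eqref{2.19-} forces $B_i(x,t) = a_i \cdot x + c_i$ to be $t$-independent, and the small-angle condition relative to $P_{Q(\sbf)} = \{x_n = 0\}$ bounds $|a_i| \lesssim \delta$. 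The Lip(1,1/2) estimate then follows from
\[
|B_i(y,s) - B_i(z,\tau)| = |a_i \cdot (y - z)| \leq |a_i| \, d_p(y,s,z,\tau).
\]

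For (ii), I would first apply Lemma \ref{10-60lemma} to reduce to $r_i \leq r_j \lesssim r_i$. By \eqref{eq1+} together with the hypothesis $10 I_i \cap 10 I_j \neq \emptyset$, the cubes $Q(i), Q(j)$ lie in a common parabolic neighborhood at scale $r_i$; for $K$ chosen sufficiently large relative to $\kappa$, I can produce a parabolic cube $C^*$ of side $\sim r_i$ satisfying $C^* \subset 8KQ(i) \cap 8KQ(j)$ whose projection $\pi(C^*)$ covers $100 I_i \cup 100 I_j$. Since both $Q(i), Q(j) \in \mathcal{G}$, the $\beta_\infty$ bound gives
\[
d_p((Z, z_n, \tau), P_{Q(i)}) + d_p((Z, z_n, \tau), P_{Q(j)}) \lesssim \epsilon\, r_i
\]
for every $(Z, z_n, \tau) \in \Sigma \cap C^*$. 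Combined with the small-angle information from (i), which allows me to convert parabolic distance to the vertical distance $|z_n - B_i(\pi(Z,z_n,\tau))|$ at bounded cost (the triangle inequality plus (i) gives $|B_i(\pi(Z,z_n,\tau)) - z_n| \leq (1+\delta)\, d_p((Z,z_n,\tau), P_{Q(i)})$), this yields $|B_i - B_j| \lesssim \epsilon\, r_i$ pointwise on the projected set $\pi(\Sigma \cap C^*)$.

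The main obstacle, as I see it, is upgrading this pointwise comparison on a data set to a uniform bound on all of $100 I_i \cup 100 I_j$. The plan is to invoke the approximation-by-planes lemmas recorded in Appendix \ref{subsplanes}: combined with parabolic ADR and the smallness of $\beta_\infty$, these force $\pi(\Sigma \cap C^*)$ to be $O(\epsilon\, r_i)$-dense in a full $n$-dimensional region of size $\sim r_i$ inside $P_{Q(\sbf)}$. Because $B_i - B_j$ is a $t$-independent affine function, being $O(\epsilon\, r_i)$-small on a non-degenerate spatial simplex at scale $r_i$ pins it down throughout a neighborhood of size $\sim r_i$, which contains $100 I_i \cup 100 I_j$; this yields the claimed bound $|B_i(y,s) - B_j(y,s)| \lesssim \epsilon \min(r_i,r_j)$.
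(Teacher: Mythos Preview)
Your argument for (i) is correct and matches the paper's.

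For (ii), your overall strategy coincides with the paper's, but there is a genuine gap in the step where you assert that ``the cubes $Q(i), Q(j)$ lie in a common parabolic neighborhood at scale $r_i$''. You justify this only by citing \eqref{eq1+} together with $10I_i\cap 10I_j\neq\emptyset$, but \eqref{eq1+} controls merely the \emph{projections} $\pi(Q(i)),\pi(Q(j))$; it says nothing about the vertical separation $|\pi^\perp(X_{Q(i)},t_{Q(i)})-\pi^\perp(X_{Q(j)},t_{Q(j)})|$. Without that, you cannot place a cube $C^*$ of side $\sim r_i$ inside both ambient cubes $C_{8K\diam Q(i)}(X_{Q(i)},t_{Q(i)})$ and $C_{8K\diam Q(j)}(X_{Q(j)},t_{Q(j)})$, since these need not overlap in $\mathbb{R}^{n+1}$ even when their projections do. The paper handles exactly this point: it first proves $d_p(Q(i),Q(j))\lesssim r_i$ by invoking Lemma~\ref{lem1} (specifically \eqref{6}), using that $d(X,t)\leq\diam Q(i)$ for $(X,t)\in Q(i)$. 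Only then does it produce the common surface ball $\Delta_r\subset KQ(i)\cap KQ(j)$ and apply Lemma~\ref{aux2}.

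A secondary inaccuracy: you claim $\pi(\Sigma\cap C^*)$ is ``$O(\epsilon r_i)$-dense''. Unilateral $\beta_\infty$ control does not give density (that would require the bilateral $b\beta_\infty$); what you actually need, and correctly describe afterward, is a non-degenerate spatial simplex in $\Sigma\cap C^*$ at scale $r_i$, supplied by p-ADR via Lemma~\ref{aux1}. The paper packages this simplex-plus-affine-extrapolation step as Lemma~\ref{aux2}, but your hands-on version is equivalent once the vertical-closeness gap above is filled.
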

\begin{proof}
 By construction of the stopping regime, the Lip(1,1/2) 
 norm of $B_i$ is bounded by $2\delta$, so $(i)$ is trivial.
 To prove $(ii)$, we follow the proof of \cite[Lemma 8.17]{DS1}.  
 Let $I_i$ and $I_j$ be two cubes such that $10 I_i  \cap 10 I_j  \not =
\emptyset$, thus $\diam I_i=r_i \approx r_j=\diam I_j$ by Lemma \ref{10-60lemma}.
We claim that if $i \in \Lambda$ or $j \in \Lambda$, then
\begin{align}\label{6++-}d_p(Q(i),Q(j))\lesssim r_i\approx r_j.
\end{align}
Let us prove the claim.
Let $i \in \Lambda$.  
Let $(X,t)\in Q(i)$ and $(Y,s)\in Q(j)$,
We may assume that
$d_p(X,t,Y,s)\geq \diam Q(i)$, the other case being trivial.
Since $d(X,t)\leq \diam Q(i)$, by \eqref{6} we see that
\begin{align}\label{6++}
|\pi^\perp(X,t)-\pi^\perp(Y,s)|\leq d_p(\pi(X,t),\pi(Y,s)).
\end{align}
By \eqref{eq1+}, 
$d_p(\pi(X,t),\pi(Y,s))\lesssim r_i\approx r_j$,  so the claim \eqref{6++-} holds.

 Suppose that $i \in \Lambda$. 
 Then by \eqref{6++-} and \eqref{eq1+}, 
 there is a ``surface ball" $\Delta_r$, centered in 
 $Q(i)$, such that $r\approx r_i\approx r_j$, and
\[Q(i) \cup Q(j) \subset \Delta_r \subset \Delta_{2r} \subset KQ(i)\cap KQ(j)\,,\]  
for $K$ large enough. 
Using 
 the second part of 
 Lemma \ref{aux2} (with $\Delta=\Delta_r$, $P_1=P_{Q(i)}$, $P_2=P_{Q(j)}$),  
 we see that if $( y, s ) \in 100 I_i \cup 100 I_j$, with
 $10 I_i  \cap 10 I_j  \not =
\emptyset$, then \begin{align}\label{2.21+}
| B_i ( y, s ) - B_j ( y, s ) |\lesssim \epsilon r.
\end{align}
The case 
$j\in \Lambda$ is exactly the same, so
this completes the proof in the case that at least one of $i$ or $j$ belongs to $\Lambda$.

If, on the other hand, $i \notin \Lambda$ and $j \notin \Lambda$, then  $Q(i)=Q(j)=Q(\sbf)$, and hence
$B_i \equiv B_j$, so the estimate holds trivially.  This completes the proof of Lemma \ref{lemma2.21}.
\end{proof}

Before proving that $\psi$ is Lip(1,1/2), we present one more preliminary lemma, which
is the parabolic version of \cite[Lemma 8.21]{DS1}.

\begin{lemma}\label{d=D}
Let
 \begin{align}\label{2.21+++b}
 (x,t)\in C'_{4\kappa R}(x_{ Q(\sbf)},t_{ Q(\sbf)})\mbox{ and $r>0$ with }D(x,t)\leq r<2\kappa R,
\end{align}
and let $Q\in\sbf$ be such that $d_p(x,t,\pi(Q))\leq \lambda r$ and 
$\lambda^{-1}r\leq\diam Q\leq \lambda r$ for some $\lambda\geq 1$. Then
 \begin{align}\label{2.21+++c}\mbox{$\pi^{-1}\left(C'_r(x,t)\right)\cap 4\kappa  Q(\sbf)$ 
 is contained in $\tilde\lambda Q$},
\end{align}
where $\tilde\lambda$ depends on $\lambda$ and $\kappa$. 

Furthermore, for some $\hat\lambda$ depending on $\kappa$, we have
 \begin{align}\label{2.21+++d}\mbox{$\hat\lambda^{-1}d(X,t)\leq D(\pi(X,t))\leq d(X,t)$ 
 for all $(X,t)\in 2\kappa  Q(\sbf)$}.
\end{align}
\end{lemma}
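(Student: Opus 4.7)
The plan is to derive both statements from the Lip$(1,1/2)$ comparison estimate \eqref{6} of Lemma \ref{lem1}, applied to carefully chosen reference points on $Q$. The mechanism is in each case the same: we bound distances in $\mathbb R^{n+1}$ by their projections into $P_{Q(\sbf)}$ plus a transverse error that is controlled by \eqref{6}, because the reference point lies on some $Q\in\sbf$ and therefore satisfies $d(\cdot)\le\diam Q$.

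For \eqref{2.21+++c}, fix $(Y,s)\in\pi^{-1}(C'_r(x,t))\cap 4\kappa Q(\sbf)$ and choose $(Y_0,s_0)\in Q$ with $d_p(\pi(Y_0,s_0),(x,t))\le 2d_p((x,t),\pi(Q))\le 2\lambda r$. The triangle inequality gives $d_p(\pi(Y,s),\pi(Y_0,s_0))\le(1+2\lambda)r$. Since $d(Y_0,s_0)\le\diam Q\le\lambda r$, a dichotomy applies: either $d_p((Y,s),(Y_0,s_0))\le N^{-1}\lambda r$ already (nothing to prove) or $d(Y_0,s_0)\le N\,d_p((Y,s),(Y_0,s_0))$, in which case Lemma \ref{lem1} yields $|\pi^\perp(Y,s)-\pi^\perp(Y_0,s_0)|\le 2\delta\,d_p(\pi(Y,s),\pi(Y_0,s_0))\lesssim\lambda r$. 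Adding projected and transverse contributions and then combining with $d_p((Y_0,s_0),(X_Q,t_Q))\le\diam Q$ produces $d_p((Y,s),(X_Q,t_Q))\lesssim\lambda r\lesssim \lambda^2\diam Q$, so $(Y,s)\in\tilde\lambda Q$ with $\tilde\lambda\approx\lambda^2$ (with possible additional dependence on $\kappa$ through the choice of $N$).

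The upper bound $D(\pi(X,t))\le d(X,t)$ in \eqref{2.21+++d} is immediate from \eqref{6-}. For the lower bound, given $(X,t)\in 2\kappa Q(\sbf)$, choose $Q\in\sbf$ essentially realizing the infimum in $D(\pi(X,t))$, so that $d_p(\pi(X,t),\pi(Q))+\diam Q\le 2D(\pi(X,t))$, and pick $(Y_0,s_0)\in Q$ with $d_p(\pi(X,t),\pi(Y_0,s_0))\le 4D(\pi(X,t))$. Repeating the same dichotomy using $d(Y_0,s_0)\le\diam Q\le 2D(\pi(X,t))$ and the Lip$(1,1/2)$ estimate \eqref{6} yields $d_p((X,t),(Y_0,s_0))\lesssim D(\pi(X,t))$, hence $d(X,t)\le d_p((X,t),Q)+\diam Q\lesssim D(\pi(X,t))$, giving $\hat\lambda$ depending on $\kappa$ (via the size comparison $D(\pi(X,t))\lesssim \kappa R$ needed to keep the selected $Q$ inside $Q(\sbf)$).

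The potential obstacle is bookkeeping the hypotheses of Lemma \ref{lem1}: one must verify that all comparison points belong to $16\kappa Q(\sbf)$, which is routine since $(X,t)\in 2\kappa Q(\sbf)$, $(Y,s)\in 4\kappa Q(\sbf)$, and $(Y_0,s_0)\in Q\subseteq Q(\sbf)$; and one must ensure that invoking Lemma \ref{lem1} with $N$ depending on $\lambda$ is permissible, which is built into the parameter regime by choosing $\epsilon$ sufficiently small and $K$ sufficiently large in a $(\lambda,\kappa)$-dependent manner. No further arguments are required: the lemma is essentially a triangle inequality together with the single non-trivial input \eqref{6}.
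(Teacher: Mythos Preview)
Your proposal is correct and follows essentially the same route as the paper: both proofs reduce to the single nontrivial input \eqref{6} from Lemma \ref{lem1}, applied via the same dichotomy (either the two points are already close, or $d(\cdot)$ of the reference point in $Q$ is dominated by their parabolic distance). The only organizational difference is that for \eqref{2.21+++d} the paper invokes the already-established \eqref{2.21+++c} as a black box (with $r=D(\pi(X,t))$ and a suitable ancestor of $Q$ to ensure $\diam Q\approx r$, whence the $\kappa$-dependence), whereas you rerun the Lemma \ref{lem1} argument directly; your direct route is slightly cleaner since it avoids the ancestor-replacement step, though you should note that when $D(\pi(X,t))=0$ the phrase ``essentially realizing the infimum'' requires the obvious limiting interpretation.
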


\begin{proof}
The proof follows that of \cite[Lemma 8.21]{DS1} essentially verbatim.
To prove \eqref{2.21+++c}, 
we consider $(Y,s) \in Q$, $(Z,\tau) \in \pi^{-1}\left(C'_r(x,t)\right)\cap 4\kappa  Q(\sbf)$.
First, if
$$d_p\big((Y,s), (Z,\tau)\big) \leq \diam Q,$$
then $(Z,\tau) \in \tilde\lambda Q$, for, say, $\tilde\lambda \geq 5$. 
Second, if
$$d_p\big((Y,s), (Z,\tau)\big) >\diam Q,$$
then $$d(Y,s) \leq \diam Q \leq d_p\big((Y,s), (Z,\tau)\big) .$$
Hence, by Lemma \ref{lem1}
$$| \pi^{\perp}(Y,s) - \pi^{\perp}(Z,\tau)| \leq d_p\big(\pi(Y,s) , \pi(Z,\tau)\big) \lesssim_\lambda  \diam Q.$$
(The last inequality follows from our assumptions that $d_p(x,t,\pi(Q))\lesssim  r\approx \diam Q$,
and that $(Y,s) \in Q$, and $\pi(Z,\tau) \in C'_r(x,t)$.)
This proves \eqref{2.21+++c}.   

To deduce \eqref{2.21+++d}, we note that the second inequality is 
trivial by definition.  To prove the first inequality,  we consider first the case that  
$\pi(X,t) \in \pi(F)$, thus, $\pi(X,t) = \pi(X',t)$ for some $(X',t)\in F$, so that $d(X',t)=0$ by definition.
But then by Lemma \ref{lem1}, $(X,t)=(X',t)$, so $d(X,t) = 0$, and  \eqref{2.21+++d} holds in this case.
Otherwise, if $\pi(X,t) \notin \pi(F)$, we then set $(x,t):= \pi(X,t)$, and $r:= D(x,t)>0$, and note that by 
definition of $D(x,t)$, there is a $Q\in\sbf$ such that
$d_p(x,t,\pi(Q))\lesssim  r\approx \diam Q$ (here, there can be possible implicit dependence on
$\kappa$ in the case that $R\ll r\lesssim \kappa R$).  
Hence, we may apply \eqref{2.21+++c} to deduce that
$(X,t) \in \tilde\lambda Q$, and therefore, 
\[d(X,t) \leq d_p(X,t,Q) +\diam Q \lesssim \diam Q \approx r=D\big(\pi(X,t)\big). \]
\end{proof}

\begin{lemma}\label{graphlip.lem}Let $\psi:\mathbb R^n\to\mathbb R$ be defined as in
\eqref{2.19}.  Then $\psi: \mathbb R^n\to\mathbb R$ is Lip(1,1/2) with 
constant on the order of $\delta$, i.e.
\begin{equation}\label{lemma4.26est}
|\psi(y,s)- \psi(z,\tau)| \lesssim \delta d_p(y,s,z,\tau)\,, \quad
\forall \, (y,s),(z,\tau) \in \mathbb{R}^n \,.
\end{equation}
\end{lemma}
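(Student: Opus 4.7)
My plan is to mimic the Whitney-extension argument of \cite[Lemma 8.26]{DS1}, organizing the analysis according to whether the two points lie in $\pi(F)$. When both $(y,s),(z,\tau)\in\pi(F)$, the bound is immediate from Lemma \ref{lem1}. The remaining two situations are the \emph{interior} case (both points outside $\pi(F)$) and the \emph{mixed} case (exactly one in $\pi(F)$); in both I will exploit the partition-of-unity identity $\sum_i \nu_i \equiv 1$ on $\mathbb R^n\setminus\pi(F)$, the finite-overlap property of Lemma \ref{10-60lemma}, and the two comparison estimates of Lemma \ref{lemma2.21}. The ``far'' sub-regimes of both cases will reduce to the ``close'' sub-regime together with Lemma \ref{lem1} by inserting intermediate points in $\pi(F)$, so it suffices to handle the close sub-regimes.

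In the interior close sub-regime, let $r:=d_p(y,s,z,\tau)\le c_0^{-1}D(y,s)$ for a small absolute $c_0$. Lemma \ref{10-60lemma} then forces every active $I_i$ (i.e.~with $\nu_i(y,s)>0$ or $\nu_i(z,\tau)>0$) to have $r_i\approx D(y,s)$ and to satisfy $10I_i\cap 10I_{i_0}\neq\emptyset$ for a fixed $i_0$ with $\nu_{i_0}(y,s)>0$. Subtracting $B_{i_0}$ and using $\sum_i\nu_i=1$ gives
\begin{align*}
\psi(y,s)-\psi(z,\tau)=B_{i_0}(y,s)-B_{i_0}(z,\tau)+\sum_i\bigl[(B_i-B_{i_0})(y,s)\nu_i(y,s)-(B_i-B_{i_0})(z,\tau)\nu_i(z,\tau)\bigr].
\end{align*}
The first difference is $O(\delta r)$ by Lemma \ref{lemma2.21}(i). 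In each of the $O(1)$ surviving summands, Lemma \ref{lemma2.21}(ii) bounds $|B_i-B_{i_0}|$ by $\epsilon r_{i_0}$, the derivative estimates \eqref{2.16+} together with $r\lesssim r_{i_0}$ give $|\nu_i(y,s)-\nu_i(z,\tau)|\lesssim r/r_{i_0}$, and the Lip$(1,1/2)$ bound on $B_i-B_{i_0}$ absorbs the rest. Everything sums to $O((\epsilon+\delta)r)=O(\delta r)$ after invoking $\epsilon\ll\delta$.

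In the mixed close sub-regime, $(y,s)\notin\pi(F)$, $(y',s')\in\pi(F)$, and $r:=d_p(y,s,y',s')\lesssim D(y,s)$. Using $\sum_i\nu_i(y,s)=1$ I write
\begin{align*}
\psi(y,s)-\hat\psi(y',s')=\sum_i\bigl[B_i(y,s)-\hat\psi(y',s')\bigr]\nu_i(y,s),
\end{align*}
and for each active $i$ split the bracket as $(B_i(y,s)-B_i(y',s'))+(B_i(y',s')-\hat\psi(y',s'))$. The first piece is $O(\delta r)$ by Lemma \ref{lemma2.21}(i). For the second, set $(Y',s'):=(y',\hat\psi(y',s'),s')\in F\subset\Sigma$; since $r_i\approx D(y,s)\approx r$, the projected distance from $(Y',s')$ to $\pi(Q(i))$ is $\lesssim r_i$, and because $d_{\sbf}(Y',s')=0$, Lemma \ref{lem1} promotes this to $d_p((Y',s'),Q(i))\lesssim r_i$. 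Hence $(Y',s')\in 8KQ(i)$ once $K$ is large relative to $\kappa$, so $\beta_\infty(Q(i))\le\epsilon$ gives $d_p((Y',s'),P_{Q(i)})\lesssim K\epsilon\,r_i$. The small angle between $P_{Q(i)}$ and $P_{Q(\sbf)}$ converts this into $|B_i(y',s')-\hat\psi(y',s')|\lesssim K\epsilon\,r_i\lesssim\delta r$, since $\epsilon$ is chosen last and satisfies $K\epsilon\ll\delta$.

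The main obstacle is precisely this localization step in the mixed case: certifying that the point $(Y',s')\in F$ really sits inside $8KQ(i)$, so that the $\beta_\infty$-approximation on $Q(i)$ can be invoked. This requires using Lemma \ref{lem1} to upgrade the projected proximity $d_p(\pi(Y',s'),\pi(Q(i)))\lesssim r_i$ into a full parabolic proximity $d_p((Y',s'),Q(i))\lesssim r_i$, by controlling the transverse component $\pi^\perp(Y',s')-\pi^\perp(X_{Q(i)},t_{Q(i)})$. Once this geometric localization is in place, the rest is Whitney bookkeeping and the cascade $\epsilon\lesssim K\epsilon\ll\delta$ is legitimate because $\epsilon$ is the final parameter in the hierarchy $1\ll\kappa\ll K$, $\epsilon\ll\delta$.
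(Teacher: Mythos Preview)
Your close interior and close mixed sub-regimes are handled correctly and mirror the paper's treatment. The genuine gap is in your reduction of the \emph{far} sub-regimes ``by inserting intermediate points in $\pi(F)$.'' This classical Whitney-extension manoeuvre does not work here, for two reasons.

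First, as the paper explicitly points out, $F$ may be empty: the stopping time regime $\sbf$ can terminate everywhere at positive scale (every point of $Q(\sbf)$ lies in some minimal cube of $\sbf$), so there are simply no points of $\pi(F)$ to insert. Second, even when $F\neq\emptyset$, the cubes $\{I_i\}$ are Whitney cubes for the function $D(\cdot)=\inf_{Q\in\sbf}[d_p(\cdot,\pi(Q))+\diam Q]$, \emph{not} for $d_p(\cdot,\pi(F))$. One always has $D(y,s)\le d_p((y,s),\pi(F))$, but the reverse inequality can fail badly, so there need not be a point of $\pi(F)$ at distance $\approx D(y,s)$ from $(y,s)$. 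Your chain $(y,s)\to\pi(F)\to(z,\tau)$ therefore cannot be built in general.

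The paper circumvents this by treating the far interior case (its case (4)) directly: instead of routing through $\pi(F)$, it inserts points $(U,t)\in Q(j)$ and $(U',t')\in Q(k)$, which always exist since $Q(j),Q(k)\in\sbf$, and decomposes $|\psi(y,s)-\psi(z,\tau)|$ into seven pieces $a_1,\dots,a_7$. The crucial middle term $a_4=|\pi^\perp(U,t)-\pi^\perp(U',t')|$ is then bounded by a direct application of Lemma~\ref{lem1}, using $d(U,t)\le\diam Q(j)\lesssim r_j\lesssim d_p(y,s,z,\tau)$. A small extra wrinkle (sub-case B2 in the paper) arises when $d_p(y,s,z,\tau)\approx r_j$ and $Q(j)\cap Q(k)=\emptyset$, where one must choose $(U,t)$ and $(U',t')$ carefully to ensure $d_p(U,t,U',t')\gtrsim\diam Q(j)$. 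The mixed case (2) is then handled by the same scheme with four terms instead of seven; no close/far split is needed there because $r_j\lesssim d_p(y,s,z,\tau)$ holds automatically once $(z,\tau)\in\pi(F)$.
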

\begin{proof}
Again, we shall follow the corresponding arguments in \cite[Chapter 8]{DS1}.  However, we point out 
that the proof in \cite{DS1} is not fully explicated, and in particular, does not 
address the case that $F=\emptyset$ (which, in contrast to the case of the classical Whitney extension
theorem, can actually happen in the present setting).
Fortunately, the methods of \cite{DS1} require only a modest refinement to address this situation.

Recall that we use the notation $r_i:= \diam I_i$.
There are four cases to consider:
\begin{enumerate}
\item $(y,s),(z,\tau) \in \pi(F)$.
\smallskip
\item $(y,s)\in  \mathbb{R}^n\setminus \pi(F) $, with $(y,s) \in 2I_j$,
and  $(z,\tau) \in \pi(F) $.
\smallskip
\item  $(y,s), (z,\tau) \in  \mathbb{R}^n\setminus \pi(F)$, with $(y,s) \in 2I_j$,
$(z,\tau) \in 2I_k$, and
\[
\text{ (WLOG) } \, r_k \leq r_j\,,\quad (z,\tau)\in 10I_j\,.\]
\item $(y,s), (z,\tau) \in  \mathbb{R}^n\setminus \pi(F)$, with $(y,s) \in 2I_j$,
$(z,\tau) \in 2I_k$, and 
\[
\text{ (WLOG) } \, r_k \leq r_j\,,\quad (z,\tau)\notin 10I_j\,.\]
\end{enumerate}

We make one more preliminary observation:  by \eqref{supportpsi}, we may suppose that
$(y,s)$ and $(z,\tau)$ both lie in the closure of 
$C_*':=C'_{4\kappa R}(x_{ Q(\sbf)},t_{ Q(\sbf)})$.   Indeed, if both points lie 
outside of this closed cube, then \eqref{lemma4.26est} is trivial, and if one (say $(y,s)$) lies outside, 
and the other inside, then we may replace $(y,s)$ by another point $(y',s') \in \partial C'_*$
such that $d_p(z,\tau,y',s') \leq d_p(z,\tau,y,s)$, since for such points $(y',s')$ and $(y,s)$, we 
have $\psi(y,s) = 0 = \psi(y',s')$.

Let us now discuss the various cases (1)-(4).

Case (1) follows immediately from Lemma \ref{lem1}, and the definition of $\psi$ (see \eqref{2.19}).

Consider next case (3).
In this case, $(y,s), (z,\tau)\in 10I_j$.
By Lemma \ref{10-60lemma}, \eqref{2.16+} and Lemma \ref{lemma2.21}, and the fact that 
$\sum_{i}  (\nu_i(y,s) - \nu_i(z,\tau)) =0$,
\begin{multline}\label{2.21++}
| \psi( y, s ) - \psi ( z,\tau) | \,\leq \, \big|\sum_{i} (B_i(y,s) - B_i(z,\tau))\nu_i(y,s)\big|
\\[4pt]
 +\, \big|\sum_{i}  (B_i(z,\tau)-B_j(z,\tau))(\nu_i(y,s) - \nu_i(z,\tau))\big| \\[4pt]
\lesssim \,\delta d_p(y,s,z,\tau)\,+\,r_j^{-1}\epsilon d_p(y,s,z,\tau)r_j 
\, \lesssim \,\delta d_p(y,s,z,\tau),
\end{multline}
 since each $B_i$ has Lip(1,1/2) norm at most $2\delta$, and $\epsilon\ll\delta$.  
 
 It remains to treat cases (2) and (4), which we shall do more or less simultaneously. 
 We remark that if $F$ is non-empty, one can reduce case (4) to case (2), as in the 
 proof of the classical Whitney extension theorem. However, in the present setting, it may be that
$F$ is empty, so we shall treat case (4) directly.  For the sake of specificity, let us do this first, 
as the proof in case (2) will be similar, but a bit simpler.

  In case (4), we decompose
$$|\psi(y,s) - \psi(z,\tau)| \leq a_1 + a_2 + a_3 + a_4 +a_5+a_6+a_7,$$
where for some $(U,t) \in Q(j)$,  $(U',t') \in Q(k)$ to
be specified more precisely below,  setting $(u,t):=\pi(U,t)$, and $(u',t'):=\pi(U',t')$,
we define
\begin{align*}
&a_1 = |\psi(y,s) - B_j(y,s)|\,, \quad a_2 = |B_j(y,s) - B_j(u,t)|\,, \\[4pt]
&a_3 = |B_j(u,t) - \pi^{\perp}(U,t)|\,, \quad a_4 = |\pi^{\perp}(U,t) -\pi^\perp(U',t')|\,, \\[4pt]
&a_5= |\pi^\perp(U',t') -B_k(u',t')|\,, \quad a_6 =|B_k(u',t') - B_k(z,\tau)|\,, \\[4pt]
&a_7 = |B_k(z,\tau) - \psi(z,\tau)|\,.
\end{align*}

We estimate these terms as follows.  For future reference, we observe that since $(z,\tau)\notin 10I_j$,
and $(y,s)\in 2I_j$, therefore, for a purely dimensional implicit constant,
\begin{equation}\label{4.29}
r_k\leq r_j \lesssim d_p(y,s,z,\tau)\,.
\end{equation}

Consider for now any
$(U,t) \in Q(j)$,
and any $(U',t') \in Q(k)$.
By definition of $\psi$, since $\sum_i \nu_i(y,s) = 1$, we have
\[
a_1 = \big| \sum_i \nu_i(y,s)\, \big(B_i(y,s) -B_j(y,s)\big)\big| \,\lesssim \,\epsilon  r_j \,\lesssim \,
\epsilon d_p(y,s,z,\tau)\,,
\]
by  Lemma \ref{lemma2.21} (ii) and \eqref{4.29}.  Similarly,
\[ a_7\,\lesssim\, \epsilon d_p(y,s,z,\tau)\,.\]

Since each $B_i$ has Lip(1,1/2) norm at most $2\delta$,
by the second inequality in \eqref{eq1+}, 
\begin{equation}\label{a2bound}
a_2 \lesssim \delta  d_p(y,s,u,t) \lesssim \delta r_j \lesssim \delta d_p(y,s,z,\tau)\,,
\end{equation}
again using \eqref{4.29}.  Similarly,
\[ a_6\,\lesssim\, \delta d_p(y,s,z,\tau)\,.\]

Since $P_{Q(j)}$ is the graph of $B_j$, which has small slope, and since
$\beta_\infty(Q(j))\leq\epsilon$,  we have
\begin{equation}\label{a3bound}
a_3\approx \dist\big((U,t), P_{Q(j)}\big) \leq \epsilon \diam Q(j) \leq 120 \epsilon r_j \lesssim
\epsilon d_p(y,s,z,\tau)\,,
\end{equation}
where we have used \eqref{eq1+} and then \eqref{4.29} in the last two steps.
Similarly,
\[ a_5 \lesssim \epsilon d_p(y,s,z,\tau)\,.
\]

It remains to deal with $a_4$.  Since
$(U,t) \in Q(j)$ and 
$(y,s)\in 2 I_j$, we have
\begin{equation}\label{4.29a}
d(U,t) \leq \diam Q(j)  \leq 120  r_j\,,\quad d_p(u,t,y,s) \leq 300 r_j\,,
\end{equation}
by \eqref{eq1+}.  Similarly, since $(U',t') \in Q(k)$, and 
$(z,\tau)\in 2 I_k$,
\begin{equation}\label{4.31}
d(U',t') \leq \diam Q(k)  \leq 120  r_k\leq 120 r_j\,,\,\,\, d_p(u',t',z,\tau) \leq 300 r_k \leq 300r_j\,.
\end{equation}

Consequently, by \eqref{4.29a} and  \eqref{4.29}, 
\begin{equation}\label{4.30} 
d(U,t)  \leq 120 r_j \lesssim d_p(y,s,z,\tau) \,.
 \end{equation}

To treat $a_4$, we now consider two sub-cases:
\begin{itemize}
\item (Sub-case A) $d_p(y,s,z,\tau) \geq 1000 r_j$.
\item (Sub-case B) $d_p(y,s,z,\tau) < 1000 r_j$.
\end{itemize}
We first consider the former.  In this case,
since $d_p(y,s,z,\tau) \geq 1000 r_j$, 
the right hand inequalities in \eqref{4.29a} and \eqref{4.31} imply that
\[ d_p(y,s,z,\tau) \approx d_p(u,t,u',t') \leq d_p(U,t,U',t') \,,\]
Combining the latter estimate with \eqref{4.30}, we obtain
\[
d(U,t)\, \lesssim  \,d_p(U,t,U',t')\,.
\]
Thus, we may apply Lemma \ref{lem1}, with $N$ equal to an absolute constant, and with 
$\epsilon \ll \delta$, 
to obtain
\begin{equation}\label{4.32}
a_4 \leq 2 \delta d_p(u,t,u',t') \approx \delta d_p(y,s,z,\tau)\,.
\end{equation}

On the other hand, consider now sub-case B, i.e., $d_p(y,s,z,\tau) < 1000 r_j$.  Thus 
\begin{equation}\label{4.33}
d_p(y,s,z,\tau)  \approx r_j\,,
\end{equation}
by \eqref{4.29}.
In the current scenario, we consider two further sub-cases. 

\smallskip
\noindent (Sub-case B1):
$Q(j)$ meets $Q(k)$.  
In this case we simply let $(U,t) = (U',t')$ be any point in $Q(j)\cap Q(k)$, so that, trivially, $a_4=0$.

\smallskip
\noindent (Sub-case B2):  $Q(j)$ and $Q(k)$ are disjoint.  In this case, we
let $(U',t')$ be any point in $Q(k)$, and we select $(U,t)$
as follows.  Let $C(j):= C_r(X_{Q(j)},t_{Q(j)})$, with $r \approx \diam Q(j)$, be the
cube defined in \eqref{cube-ball}, with respect to $Q=Q(j)$.  Since $\Sigma \cap C(j)\subset Q(j)$ by construction (see
\eqref{cube-ball}), it follows that
$(U',t') \notin C(j)$.
We then set $(U,t):= (X_{Q(j)},t_{Q(j)})$ (the center of $C(j)$), and observe that in this case
\begin{equation}\label{4.34}
d_p(U,t,U',t') \gtrsim \diam Q(j)\,,
\end{equation}
where the implicit constants here depend on those in  \eqref{cube-ball}.  

Note that \eqref{4.29a} and 
\eqref{4.31} continue to hold, since we still have
$(U,t) \in Q(j)$, 
$(y,s)\in 2 I_j$,
 $(U',t') \in Q(k)$, and 
$(z,\tau)\in 2 I_k$.  
In particular,
\begin{equation}\label{4.35}
d(U,t) \leq \diam Q(j)\,,\quad  d_p(u,t,y,s) + d_p(u',t',z,\tau)  \lesssim\, r_j\,.
\end{equation}
By  \eqref{4.33} and the second inequality in \eqref{4.35}, we see that
\begin{equation}\label{4.36}
d_p(u,t,u',t') \lesssim r_j \approx d_p(y,s,z,\tau)\,.
\end{equation}
Combining  \eqref{4.34} and the first inequality in \eqref{4.35}, 
we find that
\[
 d(U,t) \leq \diam Q(j) \lesssim d_p(U,t,U',t')\,.
\]
Consequently, we may as above apply Lemma \ref{lem1}
to obtain
\begin{equation*}
a_4 \leq 2 \delta d_p(u,t,u',t') \lesssim \delta d_p(y,s,z,\tau)\,,
\end{equation*}
where in the last step we have used \eqref{4.36}.

Summing our estimates for the terms 
$a_1$ through $a_7$, and using that $\epsilon \ll \delta$, we obtain the desired bound
\eqref{lemma4.26est} in case (4), in general.

In case (2), we proceed in a similar fashion to case (4), 
but now matters are a bit simpler: we decompose
$$|\psi(y,s) - \psi(z,\tau)| \leq a_1 + a_2 + a_3 + a_4,$$
with $a_1,a_2$ and $a_3$ exactly as above (and enjoying the very same bounds),
and with
\[a_4 = |\pi^{\perp}(U,t) - \psi(z,t)|\,,\]
where $(U,t)$ is an arbitrary point in $Q(j)$.  
Thus, again, \eqref{4.29a} holds, and combining the latter with \eqref{4.29}, we see as before that
\begin{equation}\label{4.41}
d_p(u,t,y,s) \lesssim d_p(y,s,z,\tau)\,.
\end{equation} 
Recall that in case (2), we assume $(z,\tau)\in \pi(F)$.  Consequently, by (the first part of)
Lemma \ref{lem1}, and the fact that $\psi =\hat{\psi}$ on $\pi(F)$, we have
$d(z,\tau,\psi(z,\tau)) = 0$, and $\pi^\perp(z,\tau,\psi(z,\tau)) =\psi(z,\tau)$.  Hence, by the
second part of Lemma \ref{lem1}, and \eqref{4.41},
we find that 
\[
a_4 \leq 2\delta d_p(u,t,z,\tau) \lesssim \delta d_p(y,s,z,\tau)
\]
We now sum the bounds for terms $a_1$ through $a_4$ to complete the proof of the lemma.
\end{proof}

\begin{lemma}\label{closegraph.lem} Let $(X,t)\in 2\kappa Q(\sbf)$.  Then
\begin{align}\label{2.21+++a} d_p(X,t,(x,\psi(x,t),t))\lesssim \epsilon d(X,t),\quad (x,t):=\pi(X,t).
\end{align}
In particular,  \eqref{eq2.2a-RE} holds provided $\epsilon \ll \delta/\kappa$.
\end{lemma}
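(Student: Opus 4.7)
The plan is to first observe that for $(X,t)\in 2\kappa Q(\sbf)$ with $(x,t)=\pi(X,t)$, the two points $(X,t)$ and $(x,\psi(x,t),t)$ agree in every coordinate except the $\pi^\perp$-component, so the estimate reduces to bounding $|\pi^\perp(X,t)-\psi(x,t)|$. I would then dichotomize on whether $(x,t)\in\pi(F)$. In the trivial case $(x,t)\in\pi(F)$, picking $(X',t)\in F$ with $\pi(X',t)=(x,t)$ and applying Lemma \ref{lem1} (using $d(X',t)=0$) forces $\pi^\perp(X,t)=\pi^\perp(X',t)=\hat\psi(x,t)=\psi(x,t)$, so $(X,t)\in F$, $d(X,t)=0$, and \eqref{2.21+++a} is vacuous.

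In the substantive case $(x,t)\notin\pi(F)$, let $I_j$ be a Whitney cube containing $(x,t)$; the inclusion $(X,t)\in 2\kappa Q(\sbf)$ projects to $(x,t)\in C'_{2\kappa R}(x_{Q(\sbf)},t_{Q(\sbf)})$, so $j\in\Lambda$ and an associated $Q(j)\in\sbf$ is available via \eqref{eq1+}. I would split by the triangle inequality,
\[
|\pi^\perp(X,t)-\psi(x,t)| \;\leq\; |\pi^\perp(X,t)-B_j(x,t)| \;+\; |B_j(x,t)-\psi(x,t)|\,.
\]
The second term is the routine one: since $\sum_i \nu_i(x,t)=1$ and only those $i$ with $10I_i\cap 10I_j\neq\emptyset$ contribute, Lemma \ref{lemma2.21}(ii) gives $|B_j(x,t)-\psi(x,t)|\lesssim \epsilon r_j$.

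The main obstacle is the first term, where I need to place $(X,t)$ inside $8K\,Q(j)$ in order to invoke the flatness bound $\beta_\infty(Q(j))\leq \epsilon$. I intend to do this via Lemma \ref{d=D}\eqref{2.21+++c} applied with $r=r_j$: Lemma \ref{10-60lemma} supplies $D(x,t)\lesssim r_j$, and \eqref{eq1+} supplies both $\kappa^{-1}r\lesssim \diam Q(j)\lesssim r$ and $d_p((x,t),\pi(Q(j)))\lesssim r$, so the hypotheses hold with $\lambda$ depending only on $\kappa$. The conclusion $(X,t)\in \tilde\lambda\,Q(j)$ then sits inside $8K\,Q(j)$ thanks to the standing requirement $K/\kappa\gg 1$. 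With $(X,t)\in 8K\,Q(j)$ in hand, the flatness yields $d_p((X,t),P_{Q(j)})\leq \epsilon\diam Q(j)\lesssim \epsilon r_j$, and because $B_j$ has Lip(1,1/2) norm $\lesssim \delta\ll 1$ (Lemma \ref{lemma2.21}(i)), the parabolic distance to the graph $P_{Q(j)}$ is comparable to the vertical gap, giving $|\pi^\perp(X,t)-B_j(x,t)|\lesssim \epsilon r_j$.

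Adding the two bounds produces $|\pi^\perp(X,t)-\psi(x,t)|\lesssim \epsilon r_j$. Since Lemma \ref{10-60lemma} gives $r_j\lesssim D(x,t)$ and $D(x,t)\leq d(X,t)$ directly from the definition of $D$, this yields \eqref{2.21+++a}. The ``in particular'' statement is then a brief corollary: for $(X,t)\in \kappa Q$ with $Q\in\sbf$ one has $d(X,t)\leq d_p(X,t,Q)+\diam Q\lesssim \kappa\diam Q$, so $d_p(X,t,\Sigma_{\sbf})\lesssim \epsilon\kappa\diam Q\leq \delta\diam Q$ as soon as $\epsilon\ll \delta/\kappa$.
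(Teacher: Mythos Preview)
Your proposal is correct and follows essentially the same route as the paper: reduce to the vertical gap, use Lemma~\ref{d=D}\eqref{2.21+++c} to place $(X,t)$ in a bounded dilate of $Q(j)$, invoke $\beta_\infty(Q(j))\le\epsilon$ to control $|\pi^\perp(X,t)-B_j(x,t)|$, and then pass from $B_j$ to $\psi$ via Lemma~\ref{lemma2.21}(ii). One small technical slip: the hypothesis of \eqref{2.21+++c} requires $D(x,t)\le r$, but Lemma~\ref{10-60lemma} gives $10r_j\le D(x,t)\le 60r_j$, so taking $r=r_j$ is too small; take instead $r=60r_j$ (or $r=D(x,t)$, as the paper does), which changes nothing since $r\approx r_j$ throughout.
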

\begin{proof}  
Note that if $Q \in \sbf$, then  $d(X,t) \lesssim \kappa \diam(Q)$ for all
$(X,t) \in \kappa Q$.  Using this observation, we see that \eqref{2.21+++a} implies \eqref{eq2.2a-RE} provided $\epsilon \ll \delta/\kappa$.  

We turn now to \eqref{2.21+++a}, following the proof of \cite[Proposition 8.2]{DS1}.
If $(X,t)\in F$, there is nothing to 
prove, so we consider 
 $(X,t)\in  2\kappa Q(\sbf)\setminus F$. 
In particular,
 $d(X,t)>0$. Set $(x,t):=\pi(X,t)$. 
Estimate \eqref{2.21+++d} implies that $D(x,t)>0$ and hence $(x,t)\in I_i$ for some $i$. 
Using \eqref{2.21+++c} with $Q:=Q(i)$ and $r:=D(x,t) \approx \diam I_i$, we see 
that $(X,t)\in\tilde \lambda Q(i)$, where by \eqref{eq1+}, the constant $ \lambda$ in Lemma \ref{d=D}
depends on $\kappa$, and therefore $\tilde \lambda$ depends on $\kappa$.
Since we assume that $K$ is large, depending on $\kappa$, we may in particular take $K$ much larger than 
$\tilde\lambda$, so that
 $$|\pi^\perp(X,t)-B_i(x,t)|\leq 2\epsilon\diam(Q(i)) \lesssim \epsilon \diam I_i \approx \epsilon D(x,t) \lesssim \epsilon d(X,t)\,,$$
 where we have used \eqref{3++} with $Q=Q(i)$, along with the definition of $B_i$. 
Estimate \eqref{2.21+++a} now follows, by Lemma \ref{lemma2.21} (ii) and the definition of $\psi$.
\end{proof}

\begin{lemma}
\begin{align}\label{2.21++bo}
|\nabla_x^2\psi(y,s)|+|\partial_t\psi(y,s)|\lesssim \epsilon 
r_j^{-1}\mbox{ if } (y,s)\in 2I_j.
\end{align}
\end{lemma}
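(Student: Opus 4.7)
The plan is to expand $\psi = \sum_i B_i \nu_i$ in a neighborhood of $(y,s) \in 2I_j$, which lies in $\mathbb{R}^n \setminus \pi(F)$, and to exploit two structural facts. First, by Lemma \ref{10-60lemma}, only finitely many indices contribute at $(y,s)$, namely those $i$ with $3I_i \cap 2I_j \neq \emptyset$; for each such $i$ one has $10I_i \cap 10 I_j \neq \emptyset$ and hence $r_i \approx r_j$, with only $O(1)$ contributing indices. Second, each affine function $B_i$ is in fact $t$-independent: since $P_{Q(i)} \in \mathcal{P}$ contains a line in the $t$-direction and, in our coordinates, is the graph of $B_i$, the function $B_i$ depends only on the spatial variable. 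In particular $\partial_t B_i \equiv 0$ and $\nabla_x^2 B_i \equiv 0$.

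Next I would use the cancellation $\sum_i \nu_i \equiv 1$, hence $\sum_i \partial^\alpha \nu_i \equiv 0$ for every multi-index with $|\alpha| \geq 1$, to freely subtract $B_j$ term-by-term from the differentiated sum. Combined with $\partial_t B_i = 0$ and $\nabla_x^2 B_i = 0$, this yields
\begin{align*}
\partial_t \psi(y,s) &= \sum_i (B_i - B_j)(y,s)\, \partial_t \nu_i(y,s), \\
\nabla_x^2 \psi(y,s) &= \sum_i \Big[ \nabla_x(B_i - B_j) \otimes \nabla_x \nu_i(y,s) + \nabla_x \nu_i(y,s) \otimes \nabla_x(B_i - B_j) \\
&\qquad\qquad + (B_i - B_j)(y,s)\, \nabla_x^2 \nu_i(y,s) \Big].
\end{align*}

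To conclude, I would invoke the cutoff estimates \eqref{2.16+}, which give $|\nabla_x \nu_i| \lesssim r_j^{-1}$ and $|\nabla_x^2 \nu_i|, |\partial_t \nu_i| \lesssim r_j^{-2}$, together with Lemma \ref{lemma2.21}(ii), which gives $|(B_i - B_j)(y,s)| \lesssim \epsilon r_j$. Since $B_i - B_j$ is an affine function of the spatial variable alone, and its absolute value is bounded by $\epsilon r_j$ throughout $100 I_j$ (whose spatial projection has diameter $\approx r_j$), an elementary argument forces its constant gradient to satisfy $|\nabla_x(B_i - B_j)| \lesssim \epsilon$. Each term in the two displayed expressions is therefore $\lesssim \epsilon / r_j$, and summing over the $O(1)$ contributing indices yields the claimed bound. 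The main (if modest) obstacle is recognizing the $t$-independence of each $B_i$: without it, the time-derivative bound would have to compete with $|\partial_t B_j|$, which is only controlled by $\delta^2$ and would be far too large for our purposes.
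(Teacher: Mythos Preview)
Your proof is correct and follows essentially the same approach as the paper: expand $\psi=\sum_i B_i\nu_i$, use $\partial_t B_i=0$ and $\nabla_x^2 B_i=0$, subtract $B_j$ via $\sum_i\partial^\alpha\nu_i=0$, and invoke Lemma~\ref{lemma2.21}(ii) together with the cutoff bounds~\eqref{2.16+}. The only minor difference is in justifying $|\nabla_x(B_i-B_j)|\lesssim\epsilon$: the paper rotates coordinates and appeals to Lemma~\ref{aux2} (the angle between $P_{Q(i)}$ and $P_{Q(j)}$ is $\lesssim\epsilon$), whereas you deduce it directly from the pointwise bound of Lemma~\ref{lemma2.21}(ii) on $100I_j$ via the elementary affine-function argument---both are equally valid.
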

\begin{proof} We first prove the bound for $|\nabla_x^2\psi(y,s)|$ by following \cite{DS1}.   
By the definition of $\Lambda$ (see \eqref{la-def}), and the adapted partition of unity $\{\nu_i\}$, 
we have
\begin{align}\label{diff}
\sum_{i \in \Lambda} \nu_i(y,s) =1, \text{ and } \,
\sum_{i \in \Lambda} \nabla_x\nu_i(y,s) =
\nabla_x\Big(\sum_{i \in \Lambda} \nu_i(y,s) \Big) =  0\,,
\end{align}
whenever $(y,s)\in 2I_j$. Let $\alpha$, $\beta$ be two spatial indices and let
$\partial_\alpha$, $\partial_\beta$, and $\partial_{\alpha\beta}$ denote the corresponding 
partial differential operators of order one and two. Using \eqref{diff}  
we see that if $(y,s)\in 2I_j$,  then
\begin{align*}
&\partial_{\alpha\beta} \psi(y,s) = \partial_{\alpha\beta} \left(\sum_{i}  \nu_i(y,s) B_i(y,s) \right) \\
&= \sum_{i }(\partial_{\alpha\beta} \nu_i(y,s))B_i(y,s) 
+ \sum_{i} (\partial_{\alpha} \nu_i(y,s))(\partial_\beta
 B_i(y,s)) + \sum_{i} (\partial_{\beta} \nu_i(y,s))(\partial_\alpha B_i(y,s))
\\
&= \sum_{i}(\partial_{\alpha\beta}\nu_i(y,s))(B_i(y,s)-B_j(y,s)) + \sum_{i} (\partial_{\alpha} \nu_i(y,s))
(\partial_{\beta} B_i(y,s) -\partial_\beta B_j(y,s)) \\
&+ \sum_{i} (\partial_{\beta} \nu_i(y,s)) (\partial_{\alpha} B_i(y,s) -\partial_\alpha B_j(y,s))\, =:\, I+II+III.
\end{align*}
The desired bound for term $I$ follows immediately from Lemma \ref{lemma2.21} (ii) and \eqref{2.16+},
and the fact that the cubes $3I_i$ (which contain the support of $\nu_i$) have bounded overlaps, by virtue of Lemma
\ref{10-60lemma} (specifically \eqref{eq1}).
A similar argument yields the desired bound for terms $II$ and $III$, once we observe that,
if $10I_i \cap 10I_j \neq \emptyset$, then $|\nabla_x B_i -\nabla_x B_j| \lesssim \epsilon$. 
To see the latter,  we translate and (spatially) rotate coordinates (only for purposes of the present argument) so that 
$P_{Q(j)}$ is the hyperplane $\{(x,x_n,t): \,x_n = 0\}$, hence in these new coordinates $B_j\equiv 0$.  By Lemma \ref{aux2}, the angle between $P_{Q(i)}$ and $P_{Q(j)}$ is at 
most $C\epsilon$, i.e., in the new coordinates $B_i$ is affine with 
$|\nabla_x B_i -\nabla_x B_j|= |\nabla_x B_i | \leq C\epsilon$.

The bound for $|\partial_t\psi(y,s)|$ can be produced similarly. In this 
case we first note that $\partial_t B_i = 0$ (since $P_{Q(i)} \in \mathcal{P}$) and that $$0 = \partial_t \left(\sum_i \nu_i(y,s)\right) = \sum_i \partial_t \nu_i(y,s)$$ for $(y,s)\in 2I_j$. Using this, Lemma \ref{lemma2.21} (ii), and \eqref{2.16+},  we deduce, for $(y,s)\in 2I_j$,
\begin{align*}
|\partial_t \psi(y,s)| &= \bigl|\partial_t \sum_i \nu_i(y,s) B_i(y,s)\bigr| = \bigl| \sum_i \partial_t\nu_i(y,s) B_i(y,s)\bigr|
\\ &=  \bigl| \sum_i \partial_t\nu_i(y,s) (B_i(y,s)-B_j(y,s)) \bigr|\\
& \lesssim \sum_i |\partial_t \nu_i | \epsilon r_j \lesssim \epsilon r_j^{-1}.
\end{align*}
Here we have also used $\diam(I_i) \sim \diam(I_j)$ whenever $10I_i\cap 10I_j\neq\emptyset$.
\end{proof}

\section{Pushing the geometric square function to the graph}\label{pushsqfn}

In the previous section we proved that there exists, for arbitrary but fixed $\sbf\in \mathcal{F}$, a coordinate system and a Lip(1,1/2)  function
$\psi_{\sbf}:=\psi=\psi ( x, t ): \mathbb R^{n-1}\times\mathbb R\to \mathbb R$ with
 parameter $b_1\lesssim \delta$,  such that if we define
 $$\Sigma_{\sbf}:=\Sigma_\psi:=\{(x,\psi(x,t),t):\ (x,t)\in \mathbb R^{n-1}\times\mathbb R\},$$ then
\begin{equation}\label{eq2.2a}
\sup_{(X,t)\in 2\kappa Q} d_p(X,t,\Sigma_{\sbf} )\,
\lesssim\epsilon\ell(Q)\,,\qquad \forall\,Q\in \sbf.
\end{equation}
(The latter bound \eqref{eq2.2a} is proved in Lemma \ref{closegraph.lem}.) Recall that
we have already fixed some of the relations amongst the parameters $K$,  $\epsilon$, $\delta$ and $\kappa$:
e.g., we have specified that $\epsilon \ll \delta$, depending on $\kappa$ (see Lemma \ref{closegraph.lem}), and that $K$ is very large, also depending on $\kappa$  (see Lemma \ref{lem1} and the 
proof of Lemma \ref{closegraph.lem}).  A little more precisely, we have $0<C(\kappa) \,\epsilon\ll \delta\ll 1$, for some large enough constant $C(\kappa)$, and 
$K=c(\kappa,n,M)\gg 1$.
In addition to these existing relations, in the final part of the argument, see
\eqref{geo9-}, $\epsilon$ will be chosen to depend further
upon $n, K$, the ADR constant $M$, and the parabolic UR constant 
$\Gamma$ (see Definition \ref{def1.UR}), as well as on $\kappa$ and $ \delta$.

To complete the proof of Theorem \ref{main} we have to prove two things. First, we have to prove that the constructed
function $ \psi_{\sbf}:=\psi = \psi ( x, t ) : \mathbb R^{n-1}\times\mathbb R\to \mathbb R$ in fact is a {\it regular} parabolic Lip(1,1/2)  function with parameters $b_1\lesssim\delta$ and $b_2=b_2(n,M,\Gamma)$. Second, we have to prove that the maximal cubes $Q(\sbf)$ for the  stopping time regimes satisfy the Carleson
packing condition
\begin{align}\label{pack}\sum_{\sbf: Q(\sbf)\subset Q}\sigma\big(Q(\sbf)\big)\,\leq\, c(n,M,\Gamma,\delta,\kappa)\, \sigma(Q)\,,
\quad \forall Q\in \dd(\Sigma).
\end{align}
The key to both of these arguments is to make use of the geometric square function which is part of the definition of parabolic uniform rectifiability. In this section we prove how this information can be pushed to the constructed graph.

Recall that $\Sigma$ is equipped with the geometric square function $\gamma$ and the Carleson measure $\nu$. We let
$\ti \Sigma$ be the graph of $\psi$ and we denote the corresponding geometric square function and  Carleson measure by $\tilde\gamma$ and  $\tilde \nu$, respectively.  In addition, for $ r > 0, ( z, \tau  ) \in
\mathbb R^{n} $ we introduce
\begin{align}\hat\gamma ( z, \tau, r ) \, := \, \biggl (  \inf_{ L }  \, \bariint_{ C'_r (  z, \tau  ) } \, \biggl (\frac {| \psi ( y, s ) - L ( y ) |}{r}\biggr )^2 \, \d y\d s \biggr )^{1/2},
\end{align} where the infimum is with respect to all linear functions of $ y $ (only). Using the regularity of the $\psi$  constructed,  we see that\footnote{See Remark \ref{r-measures}.}
\begin{align}\label{measure}
\ d \si_{\ti\Sigma} \approx \ d(\mu|_{\ti\Sigma})
( Y, s ) = \sqrt{ 1 + | \nabla_y \psi  ( y, s ) |^2  } \, \d y\d s\sim \d H^{n-1 } ( y)\d s,
\end{align}
for all $( Y, s ) \in \ti\Sigma$, and we recall that $\mu$ is the slice-wise measure (see \eqref{slicewise.def}).  The reader can easily verify that the slice-wise measure on the graph is parabolic ADR, and hence after an application
of Proposition \ref{muhpsim.prop} that the implicit constants in the first comparability in display \ref{measure} above depend only on constants which depend on dimension and $\delta$.
Combining \eqref{measure} with \eqref{lemma4.26est} and \eqref{2.21++bo} it follows that
\begin{align}\label{2.30}\hat\gamma  (  z,  \tau, r
)\sim \tilde \ga (  Z , \tau, r )\mbox{ for all }(Z,\tau)=(z,\psi(z,\tau),\tau)\in\ti\Sigma,\ r>0.
\end{align}

The essence of the section is to prove that we can control integrals of $\hat\gamma  (  z,  \tau, r
)$, and hence integrals of $\tilde \ga (  Z , \tau, r )$,  with  corresponding quantities involving $\gamma$, with an error controlled by $\epsilon^2$. In particular, the most important lemma in this section is the following.

\begin{lemma}\label{count++} Fix $\sbf \in \F$.
Consider $(\hat z,\hat\tau)\in\mathbb R^n$, $\rho>0$, and let
\begin{align}\label{pl11n}
 \hat\nu(\hat z,\hat\tau,\rho):=
 \int_0^{\rho}\iint_{C'_{\rho}(\hat z,\hat\tau)}(\hat\gamma ( z, \tau, r ))^2\frac {\d z\d \tau\d r}r.
\end{align}
Assume further that
\begin{align} \label{local}
C'_{2\rho}(\hat z,\hat\tau)\subseteq C'_{2\kappa R}(x_{ Q(\sbf)},t_{ Q(\sbf)}),\quad
\rho\leq\kappa R/10.
\end{align}
Then \begin{align}\label{pl11n+}
 \hat\nu(\hat z,\hat\tau,\rho)\lesssim  \epsilon^2\rho^{d}+\iiint_{E(\hat z,\hat\tau,\rho)} (\ga  ( Z, \tau, Kr))^2 \, \frac{\d \si ( Z, \tau)\d r} {r}
\end{align}
where $E(\hat z,\hat\tau,\rho)$ is the set of all 
$(Z,\tau,r)\in \big[\Sigma\cap\pi^{-1}(C'_{K\rho}(\hat z,\hat\tau))\big]\times(0,\infty)$ such that
$ K^{-1}d(Z,\tau)\leq r\leq K\rho$.
\end{lemma}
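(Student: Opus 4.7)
The plan is to split the domain of integration defining $\hat\nu(\hat z,\hat\tau,\rho)$ according to whether the scale $r$ is small or large relative to $D(z,\tau)$, treating each region with a different argument. On small scales, the smoothness of $\psi$ from the Whitney-type construction of Section \ref{subsgraphconst} delivers a sharp pointwise bound on $\hat\gamma$ that integrates cleanly into the $\epsilon^2\rho^d$ error. On large scales, I will transfer $\hat\gamma$ at scale $r$ to $\gamma_\Sigma$ at scale $Kr$, using the closeness of $\ti\Sigma$ to $\Sigma$ (within $\epsilon\, d$) supplied by Lemma \ref{closegraph.lem}.

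For the small-scale part ($r\le c_0 D(z,\tau)$ for an absolute constant $c_0$), the point $(z,\tau)$ lies in some Whitney cube $I_j$ with $r_j\approx D(z,\tau)$ and $C'_r(z,\tau)\subset 2I_j$. Since each $B_i$ is affine in $x$ alone (the planes $P_{Q(i)}$ being $t$-independent), the bounds $|\nabla_x^2\psi|+|\partial_t\psi|\lesssim \epsilon/r_j$ from \eqref{2.21++bo}, coupled with a Taylor expansion at $(z,\tau)$ using $L(y):=\psi(z,\tau)+\nabla_x\psi(z,\tau)\cdot (y-z)$, yield $|\psi(y,s)-L(y)|\lesssim \epsilon r^2/r_j$ on $C'_r(z,\tau)$, whence $\hat\gamma(z,\tau,r)\lesssim \epsilon r/r_j$. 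Integrating $\hat\gamma^2\,dr/r$ from $0$ to $c_0 D(z,\tau)$ yields $\lesssim \epsilon^2$ pointwise, which upon integration over $C'_\rho(\hat z,\hat\tau)$ produces the desired $\epsilon^2\rho^d$ contribution.

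For the large-scale part, I fix $(z,\tau,r)$ with $r > c_0 D(z,\tau)$ and select $Q\in\sbf$ nearly realizing the infimum in $D(z,\tau)$, passing to an ancestor if needed so that $\diam Q \sim r + D(z,\tau)\approx r$; taking any $(Z,\tau^*)\in Q$ gives a point in $\Sigma$ with $\pi(Z,\tau^*)\in C'_{K\rho}(\hat z,\hat\tau)$ and $d(Z,\tau^*)\le \diam Q\lesssim Kr$, so $(Z,\tau^*,r)\in E(\hat z,\hat\tau,\rho)$. Letting $P$ essentially realize $\gamma_\Sigma(Z,\tau^*,Kr)$, Lemma \ref{aux2} combined with \eqref{dico}(A) (applied to the ancestor of $Q$) forces $\Ang(P,P_{Q(\sbf)})\lesssim 1$, so $P$ expresses as $\{x_n = L(x)\}$ for $L$ affine in $x$ alone; inserting $L$ into the definition of $\hat\gamma(z,\tau,r)$ and converting the proximity of $\Sigma$ to $P$ into proximity of $\psi$ to $L$ via Lemma \ref{closegraph.lem} (at the cost of an error $\lesssim \epsilon Kr$ from the graph-set mismatch) produces the pointwise bound $\hat\gamma(z,\tau,r)^2\lesssim C_K\bigl(\gamma_\Sigma(Z,\tau^*,Kr)^2 + \epsilon^2\bigr)$.

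Finally, the large-scale pointwise estimates are aggregated through a bounded-overlap correspondence $(z,\tau,r)\mapsto (Z,\tau^*)$, organized so that the scale $r$ is effectively linked to the scale of its associated cube $Q$; this uses that $d\sigma|_\Sigma \sim dy\,ds$ on graph patches (see \eqref{measure}) together with the proximity of $\Sigma$ to $\ti\Sigma$. The $\gamma_\Sigma^2$ pieces assemble into the triple integral over $E$ in \eqref{pl11n+}, while the $\epsilon^2$ accumulations, combined with the small-scale contribution, sum to $\lesssim \epsilon^2\rho^d$. The principal technical obstacle is ensuring the angular control needed to realize $P$ as a graph over $P_{Q(\sbf)}$ and arranging the covering so that the $\epsilon^2$ error accumulates only a bounded number of times per scale; this is exactly where the stopping-time condition \eqref{dico}(A) and the appendix Lemma \ref{aux2} become essential.
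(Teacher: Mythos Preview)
Your proposal has a genuine gap in the large-scale part. The pointwise estimate you claim,
\[
\hat\gamma(z,\tau,r)^2\lesssim C_K\bigl(\gamma_\Sigma(Z,\tau^*,Kr)^2 + \epsilon^2\bigr),
\]
carries a \emph{constant} additive error $\epsilon^2$, and this is not integrable over scales. Integrating the $\epsilon^2$ piece over the large-scale region gives
\[
\iint_{C'_\rho(\hat z,\hat\tau)}\int_{c_0 D(z,\tau)}^\rho \epsilon^2\,\frac{\d r}{r}\,\d z\,\d\tau
=\epsilon^2\iint_{C'_\rho(\hat z,\hat\tau)}\log\bigl(\rho/(c_0 D(z,\tau))\bigr)\,\d z\,\d\tau,
\]
which is unbounded whenever $\pi(F)$ has positive measure (there $D\equiv 0$), and in general is not controlled by $\rho^d$. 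Your remark that the $\epsilon^2$ error ``accumulates only a bounded number of times per scale'' does not help: the error is present at \emph{every} point and every scale, so the scale integral $\int \d r/r$ diverges. The source of the trouble is your use of Lemma \ref{closegraph.lem}: that lemma gives $d_p\big((X,t),\ti\Sigma\big)\lesssim\epsilon\, d(X,t)$ for $(X,t)\in\Sigma$, and at ambient scale $r$ this is $\lesssim\epsilon r$, which after normalization contributes exactly the constant $\epsilon$ to $\hat\gamma$.

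The paper's route is a genuinely sharper pointwise estimate (Lemma \ref{count+}), obtained \emph{not} through Lemma \ref{closegraph.lem} but by decomposing $C'_r(z,\tau)$ over $\pi(F)$ and the Whitney cubes $I_i$, and passing on each $I_i$ through the \emph{Whitney-scale} plane $P_{Q(i)}$. The approximation $|\psi-B_i|\lesssim \epsilon r_i$ on $I_i$ then yields an error $\epsilon^2(r_i/r)^{d+2}$ rather than $\epsilon^2$, and summing over $i$ and integrating in $r$ gives precisely $\epsilon^2\rho^d$ (the ``$B$'' term in the proof). The remaining comparison of $P_{Q(i)}$ to the $\gamma$-optimal plane $P$, carried out via points of $Q(i)\subset\Sigma$ (Lemma \ref{aux1}), is where the possibly \emph{unbounded} overlap of the sets $2Q(i)$ must be confronted; the paper handles this with the counting functions $N_i$ of Lemma \ref{count} combined with a H\"older trick at exponent $2/3$. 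This is exactly the device (borrowed from \cite{DS1}) that the introduction flags as the point where \cite{RN1} went wrong, and it is entirely absent from your outline.
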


From the previous lemma we shall be able to deduce the following lemma which will yield the desired regularity of the graph (see Lemma \ref{pushlemma2}).

\begin{lemma}\label{pushlemma1} Let $(\hat z,\hat\tau)\in\mathbb R^n$, $\rho>0$, and let $\hat\nu(\hat z,\hat\tau,\rho)$ be defined as in Lemma \ref{count++}. Then
 \begin{align}\label{pl11}
  \hat\nu(\hat z,\hat\tau,\rho)&\lesssim (\epsilon^2+||\nu||)\rho^{d}.
\end{align}
\end{lemma}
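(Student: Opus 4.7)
The plan is to combine Lemma \ref{count++} with the Carleson measure property \eqref{eq1.sf} of $\nu$ on $\Sigma$. I will focus on the main case in which the localization condition \eqref{local} holds; when it fails (i.e.\ when $\rho > \kappa R/10$ or $C'_{2\rho}(\hat z,\hat\tau) \not\subseteq C'_{2\kappa R}(x_{Q(\sbf)},t_{Q(\sbf)})$), one can cover $C'_\rho(\hat z,\hat\tau)\times (0,\rho)$ by the Carleson subregions of size $\sim \kappa R$ that do satisfy \eqref{local}, using that $\psi$ is supported in $C'_{4\kappa R}(x_{Q(\sbf)},t_{Q(\sbf)})$ by \eqref{supportpsi} to discard contributions from far away cubes.

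Under \eqref{local}, applying Lemma \ref{count++} yields
\begin{equation*}
\hat\nu(\hat z,\hat\tau,\rho) \lesssim \epsilon^{2}\rho^{d} + \iiint_{E(\hat z,\hat\tau,\rho)} \gamma(Z,\tau,Kr)^{2}\,\frac{d\sigma(Z,\tau)\,dr}{r}.
\end{equation*}
I would then perform the change of variables $s = Kr$ to rewrite the triple integral as
\begin{equation*}
\iiint_{E'} \gamma(Z,\tau,s)^{2}\,\frac{d\sigma(Z,\tau)\,ds}{s},
\end{equation*}
where $E' = \{(Z,\tau,s) : (Z,\tau)\in \Sigma \cap \pi^{-1}(C'_{K\rho}(\hat z,\hat\tau)),\ d(Z,\tau) \le s \le K^{2}\rho\}$. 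Setting $A := \{(Z,\tau)\in \Sigma : \pi(Z,\tau) \in C'_{K\rho}(\hat z,\hat\tau),\ d(Z,\tau)\leq K^{2}\rho\}$, it suffices to exhibit a single surface ball $\Delta_* = \Delta(X_*,t_*,c_{K}\rho)$ on $\Sigma$, with $c_{K}$ depending only on $K,n,M,\kappa$, such that $A \subseteq \Delta_*$. Granted this, the Carleson measure property gives
\begin{equation*}
\iiint_{E'} \gamma^{2}\,\frac{d\sigma\,ds}{s} \le \nu\bigl(\Delta_* \times (0, c_{K}\rho)\bigr) \le \|\nu\|\,(c_{K}\rho)^{d} \lesssim \|\nu\|\,\rho^{d},
\end{equation*}
which combines with the $\epsilon^{2}\rho^{d}$ term to give the desired bound.

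The main obstacle, and the crux of the argument, is to verify that $A$ has parabolic diameter $\lesssim_{K} \rho$. Given $(Z,\tau)\in A$, the definition of $d$ supplies $Q\in\sbf$ with $d_{p}(Z,\tau,Q) + \diam Q \le 2K^{2}\rho$, so for any $(X,t)\in Q$ we have $(Z,\tau) \in C_{2K^{2}\rho}(X,t)$. Since $Q \subseteq 2\kappa Q(\sbf)$ (using \eqref{local} to keep $Q$ in the relevant range), Lemma \ref{closegraph.lem} places $(X,t)$ within $O(\epsilon K^{2}\rho)$ of $\Sigma_{\sbf}$, so $(Z,\tau)$ lies within $O_{K}(\rho)$ of $\Sigma_{\sbf}$. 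Because $\psi$ is Lip$(1,1/2)$ with constant $\lesssim \delta$ (Lemma \ref{graphlip.lem}), all points of $\Sigma_{\sbf}$ whose projection belongs to $C'_{K\rho}(\hat z,\hat\tau)$ lie in a parabolic ball of radius $O_{K}(\rho)$; hence so do all points of $A$. Picking any $(X_*,t_*)\in A$ (or declaring $E'=\emptyset$ if $A=\emptyset$, in which case the bound is trivial) produces the required surface ball $\Delta_*$, completing the proof. The delicate point is controlling the normal component $\pi^{\perp}(Z,\tau)$, which is not directly bounded by $\pi(Z,\tau)$, but is recovered via the two-step detour $(Z,\tau) \rightsquigarrow Q \rightsquigarrow \Sigma_{\sbf}$ using Lemma \ref{closegraph.lem} together with the graph representation of $\Sigma_{\sbf}$.
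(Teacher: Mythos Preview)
Your proposal is correct and follows essentially the same route as the paper's proof. The paper's argument in case (i) (where \eqref{local} holds) is stated tersely as following ``immediately from Lemma \ref{count++} and the fact that $\Sigma$ is parabolic UR''; your write-up supplies precisely the missing verification that the region $E(\hat z,\hat\tau,\rho)$ is contained in a Carleson box $\Delta_*\times(0,c_K\rho)$ of scale $\approx_K \rho$, via the detour $(Z,\tau)\rightsquigarrow Q\rightsquigarrow \Sigma_{\sbf}$ using Lemma \ref{closegraph.lem} and Lemma \ref{graphlip.lem}. For the remaining cases the paper splits into (ii) $\rho\geq \kappa R/10$ (covering by case (i) at scale $\kappa R$) and (iii) $C'_{2\rho}(\hat z,\hat\tau)$ exits $C'_{2\kappa R}$ (using \eqref{supportpsi} and \eqref{2.21++bo} with Taylor's theorem, as in \eqref{pl41}); your sketch combines these into a single covering-plus-support argument, which is equivalent.
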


The rest of this section is devoted to the proofs of Lemma \ref{count++} and Lemma \ref{pushlemma1}.  To achieve this we will prove a number of auxiliary lemmas. Recall that $R = \diam(Q(\sbf))$.
\subsection{Auxiliary lemmas}
\begin{lemma}\label{count} Let $(z,\tau)\in  C'_{2\kappa R}(x_{ Q(\sbf)},t_{ Q(\sbf)})$ and $D(z,\tau)/60\leq r\leq \kappa R/10$. Let $ I( z,\tau, r )$ be the set of all $i\in \Lambda$ such that $C_r(z,\tau)\cap I_i\neq \emptyset$. Let, for $i\in\Lambda$, $J(i)$ be the subset of $ I( z,\tau, r )$ which consists of those $j$ which satisfy
$\diam (Q(j))\leq \diam (Q(i))$ and $2Q(j)\cap 2Q(i)\neq\emptyset$. Let
$$N_i(Y,s):=\sum_{j\in J(i)}\chi_{2Q(j)}(Y,s).$$
Then
\begin{equation}\label{Niavgbnd.eq}
\iint N_i(Y,s)\, \d\sigma (Y,s)\lesssim \sigma(Q(i)).
\end{equation}
Furthermore, there exists, for any $\beta > 1$, a constant $c=c(n,M,\beta)> 1$, such that
\begin{equation}\label{Niptwsbnd.eq}
\sum_{i\in I( z,\tau, r )}(N_i(Y,s))^{-\beta}\chi_{2Q(i)}(Y,s)\leq c,
\end{equation}
for all $(Y,s)\in\cup_{i\in I( z,\tau, r )}(2Q(i))$. Here we interpret\footnote{This interpretation will be valid when we use the functions $N_i$, since by \eqref{Niavgbnd.eq} the set of such points has measure zero.} $(N_i(Y,s))^{-\beta} = 0$ if $N_i(Y,s) = \infty$.

\end{lemma}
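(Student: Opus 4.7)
The plan is to handle the two estimates separately, exploiting the Whitney nature of $\{I_j\}$ together with the comparability $\diam(Q(j)) \approx \diam(I_j)$ and the proximity $d_p(\pi(Q(j)), I_j) \lesssim \diam(I_j)$ furnished by \eqref{eq1+} (with implicit constants depending on $\kappa$).

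For \eqref{Niavgbnd.eq}, I would begin by applying Fubini and the ADR property of $\Sigma$ to obtain
\begin{align*}
\iint N_i(Y,s)\, \d\sigma(Y,s) \,=\, \sum_{j \in J(i)} \sigma(2Q(j)) \,\lesssim\, \sum_{j \in J(i)} \diam(Q(j))^d \,\lesssim_\kappa\, \sum_{j \in J(i)} |I_j|_p,
\end{align*}
where $|\cdot|_p$ denotes the parabolic Lebesgue measure on $\mathbb{R}^n$, so that $|I_j|_p \sim \diam(I_j)^d$. Since the Whitney cubes $\{I_j\}$ are pairwise disjoint in $\mathbb{R}^n$, it suffices to localize $\bigcup_{j \in J(i)} I_j$ inside a single region of parabolic Lebesgue measure $\lesssim_\kappa \diam(Q(i))^d$. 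For $j \in J(i)$ we have $\diam(I_j) \lesssim_\kappa \diam(Q(i))$; moreover, $\pi$ is $\operatorname{Lip}(1,1/2)$ with constant $1$, so the hypothesis $2Q(j) \cap 2Q(i) \neq \emptyset$ yields $d_p(\pi(Q(j)), \pi(Q(i))) \lesssim \diam(Q(i))$. Combining this with $d_p(\pi(Q(j)), I_j) \lesssim_\kappa \diam(I_j)$ places every such $I_j$ inside a parabolic neighborhood of $\pi(Q(i))$ of radius $\lesssim_\kappa \diam(Q(i))$. Since $\pi(Q(i))$ itself has parabolic diameter at most $\diam(Q(i))$, this neighborhood fits inside a parabolic cube in $\mathbb{R}^n$ of side comparable to $\diam(Q(i))$, hence has parabolic Lebesgue measure $\lesssim_\kappa \diam(Q(i))^d \sim \sigma(Q(i))$ by ADR, proving \eqref{Niavgbnd.eq}.

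For \eqref{Niptwsbnd.eq}, I would fix $(Y,s)$ and enumerate the indices in $\{i \in I(z,\tau,r) : (Y,s) \in 2Q(i)\}$ in non-decreasing order of $\diam(Q(\cdot))$ as $i_1, i_2, \ldots$ (ties broken arbitrarily). For any $k \leq \ell$, the index $i_k$ belongs to $J(i_\ell)$: indeed $i_k \in I(z,\tau,r)$ by assumption, $\diam(Q(i_k)) \leq \diam(Q(i_\ell))$ by the ordering, and $2Q(i_k) \cap 2Q(i_\ell) \neq \emptyset$ since both doubles contain $(Y,s)$. Moreover $(Y,s) \in 2Q(i_k)$, so $i_k$ contributes to $N_{i_\ell}(Y,s)$, giving $N_{i_\ell}(Y,s) \geq \ell$. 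Therefore
\begin{align*}
\sum_{i \in I(z,\tau,r)} (N_i(Y,s))^{-\beta}\,\chi_{2Q(i)}(Y,s) \,=\, \sum_{\ell \geq 1} N_{i_\ell}(Y,s)^{-\beta} \,\leq\, \sum_{\ell \geq 1} \ell^{-\beta},
\end{align*}
which is finite precisely because $\beta > 1$, uniformly in $(z,\tau),\, r$ and $(Y,s)$.

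The only non-routine step is the spatial containment argument underpinning \eqref{Niavgbnd.eq}: one must simultaneously use both the size comparability and proximity provided by \eqref{eq1+}, together with the $\operatorname{Lip}(1,1/2)$ geometry of $\pi$, in order to pack all of the disjoint Whitney cubes $I_j$ indexed by $j \in J(i)$ into a single bounded dilate of $\pi(Q(i))$; everything else is bookkeeping. The second estimate, by contrast, reduces to a clean pigeonhole observation: sorting by non-decreasing scale forces $N_{i_\ell}(Y,s)$ to grow at least linearly in $\ell$, which is sharp in view of the necessity $\beta > 1$.
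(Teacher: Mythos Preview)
Your proof of \eqref{Niavgbnd.eq} is essentially identical to the paper's: both use Fubini, the comparability $\sigma(Q(j))\approx |I_j|_p$ from \eqref{eq1+}, disjointness of the Whitney cubes $I_j$, and the containment of each $I_j$ with $j\in J(i)$ inside a fixed dilate of $\pi(Q(i))$.

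For \eqref{Niptwsbnd.eq} your argument is correct and is genuinely more direct than the paper's. The paper fixes $(Y,s)$, groups the relevant indices by the dyadic generation of $Q(i)$, introduces a buffer constant $a'=a'(n,M)$ so that cubes separated by at least $a'$ generations have comparable diameters in the right order, bounds the number of relevant cubes in each generation by $c(n,M)$, and then sums $(\max\{1,m-a'\})^{-\beta}$ over generations $m$. You bypass the generation bookkeeping entirely by sorting directly on $\diam(Q(\cdot))$: the definition of $J(\cdot)$ together with the common point $(Y,s)\in 2Q(i_k)\cap 2Q(i_\ell)$ immediately gives $i_k\in J(i_\ell)$ for $k\le \ell$, hence $N_{i_\ell}(Y,s)\ge \ell$. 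As a side benefit your constant is $\sum_{\ell\ge 1}\ell^{-\beta}=\zeta(\beta)$, depending only on $\beta$, whereas the paper's constant carries dependence on $n,M$ through $a'$ and the per-generation multiplicity bound.

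One small point worth making explicit: your enumeration ``in non-decreasing order of $\diam(Q(\cdot))$'' presupposes that the index set $\{\,i\in I(z,\tau,r):(Y,s)\in 2Q(i)\,\}$ admits such an enumeration. If this set is infinite, the disjoint Whitney cubes $I_i$ (each meeting $C_r'(z,\tau)$ and having $\diam I_i\approx_\kappa\diam Q(i)$) force $\diam(Q(i))\to 0$ along a subsequence; your own observation then gives $N_i(Y,s)=\infty$ for every $i$ in the set, and the sum vanishes by the convention $(N_i)^{-\beta}=0$. This is precisely the case distinction the paper makes at the outset, and a one-line remark would make your argument self-contained.
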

\begin{proof}
To prove \eqref{Niavgbnd.eq} of the lemma it suffices to note that
\begin{align}
\iint N_i(Y,s)\, \d\sigma (Y,s)\lesssim \sum_{j\in J(i)}\sigma(Q(j))\lesssim \sum_{j\in J(i)}\sigma(I_j)\lesssim
\sigma(Q(i))
\end{align}
since $\{I_j\}$ is a disjoint collection,  with $d_p\big(I_j,\pi(Q(i))\big)\lesssim r_i$ for each $j\in J(i)$.

To prove \eqref{Niptwsbnd.eq}, we consider $(Y,s) \in \bigcup_{i \in I(z,\tau, r)} 2Q(i)$ fixed. Note that in the sum in \eqref{Niptwsbnd.eq}  we only consider indices $i$ in $I(z,\tau, r)$ such that $(Y,s) \in 2Q(i)$ and in the following we, for simplicity, denote this collection $\widetilde{I}$. An important observation is that there exists $a'=a'(n,M) \in \mathbb{N}$, such that if $i, i' \in \widetilde{I}$, $Q(i) \in \mathbb{D}_k$ and $Q(i') \in \mathbb{D}_{k'}$,  for some
$k$ and $k'$ such that $k \le k' - a'$, then $i' \in J(i)$. This is a consequence of  the facts that $\diam(Q) \approx 2^{-k}$ for all $Q \in \mathbb{D}_k$, and that $(Y,s) \in 2Q(i) \cap 2Q(i')$.

We divide the proof of \eqref{Niptwsbnd.eq} into two cases. First, assume that $\# \widetilde{I} = \infty$. 
In this case, since each $Q(i) \subseteq Q({\bf S})$, 
there must exist a  sequence of cubes $\{Q(i_m)\}_m$ with $i_m \in \widetilde{I}$, such that $\diam(Q(i_m)) \to 0$ as $m \to \infty$. Using the observation
made above, this implies that $N_i(Y,s) = \infty$ for all $i \in \widetilde{I}$. In particular, $$\sum_{i \in I(z,\tau, r)} (N_i(Y,s))^{-\beta} \mathbbm{1}_{2Q(i)}(Y,s)  = 0.$$

Second, assume that $\# \widetilde{I} < \infty$. In this case, the sets $\{Q(i)\}$, where $i \in \widetilde{I}$,  belong to a finite number of generations of $\mathbb{D}$. With this in mind, we let $\{k_m\}_{m = 1}^{m_0}$, $k_1 > k_2  > \dots > k_{m_0}$, be all the 
integers for which there exists $Q(i) \in \mathbb{D}_{k_m}$ with $i \in \widetilde{I}$. For $m \in \{1,2,\dots,m_0\}$ we introduce $$\mathcal{G}_m := \{Q(i) : i \in \widetilde{I}, Q(i) \in \mathbb{D}_{k_m}\},$$ and
we note that $\mathcal{G}_m$ is non-empty. Again by the observation made above, we can conclude that if $Q(i) \in \mathcal{G}_m$ and $Q(i') \in \mathcal{G}_{m'}$, with $m \ge m' + a'$, then $i'\in J(i)$. Thus,
\[N_i(Y,s) \ge \max\{1, m-a'\}, \quad \forall Q(i) \in \mathcal{G}_m.\]
For each dyadic generation $k$, every
$Q \in \dd_k$
such that $(Y,s)\in 2Q$, must lie within a distance 
$\lesssim \diam Q \approx 2^{-k}$ of $(Y,s)$, hence, by the dyadic cube construction, 
there are at most a uniformly bounded number of such cubes;
in particular, $\#\mathcal{G}_m \le c(n,M)$. Consequently,
\begin{align*}
\sum_{i \in I(z,\tau, r)} (N_i(Y,s))^{-\beta} \mathbbm{1}_{2Q(i)}(Y,s)  &= \sum_{m = 1}^{m_0} \sum_{Q(i) \in \mathcal{G}_m} (N_i(Y,s))^{-\beta} \mathbbm{1}_{2Q(i)}(Y,s)
\\& \le \sum_{m = 1}^{m_0} \#\mathcal{G}_m (\max\{1, m-a'\})^{-\beta}\\
& \le c'(n,M,\beta),
\end{align*}
where we have used that $a' = a'(n, M)$.\end{proof}

The following lemma will be an important ingredient in the proof of Lemma \ref{count++}.

\begin{lemma}\label{count+} Let $(z,\tau)\in  C'_{2\kappa R}(x_{ Q(\sbf)},t_{ Q(\sbf)})$ and $D(z,\tau)/60\leq r\leq \kappa R/10$. Let $ I( z,\tau, r )$ be the set of all $i\in \Lambda$ such that $C'_r(z,\tau)\cap I_i\neq \emptyset$. Let  $(X,t)\in\Sigma$, $(X,t)=(X(z,\tau,r),t(z,\tau,r))$, be such that
$(X,t)\in Q(\sbf)$ and $d_p(z,\tau,\pi(X,t))\lesssim r$.  Then
\begin{multline}\label{5.17}
(\hat\gamma (z,\tau, r ))^2
\\[4pt]
\lesssim  r^{-d}\iint_{\Sigma\cap C_r(X,t)}(\gamma(Y,s,Kr/10))^2\, \d\sigma(Y,s) \,
+\,\sum_{i\in I( z, \tau, r )} \epsilon^2(r_i/r)^{d+2},
\end{multline}
\end{lemma}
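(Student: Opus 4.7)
The approach I will take follows the strategy of \cite[Chapter 8]{DS1} adapted to the parabolic setting: choose a good affine candidate $L$ in the infimum defining $\hat\gamma(z,\tau,r)$, bound the square error $\iint_{C'_r(z,\tau)}|\psi(y,s)-L(y)|^2\,dy\,ds$, and split the integral over $\pi(F)\cap C'_r$ and the Whitney pieces $I_i\cap C'_r$, $i\in I(z,\tau,r)$. First, I will select a $t$-independent plane $P\in\mathcal{P}$ that (nearly) minimizes the $L^2$ distance to $\Sigma$ on $\Sigma\cap C_{Kr/10}(X,t)$, so that
\[
\iint_{\Sigma\cap C_{Kr/10}(X,t)}d_p(Y,s,P)^2\,d\sigma(Y,s)\,\lesssim\,(Kr/10)^{d+2}\,\gamma(X,t,Kr/10)^2.
\]
Using $(X,t)\in Q(\sbf)$, the stopping-time condition \eqref{dico}(A), and Lemma \ref{aux2} from the appendix (two planes both approximating $\Sigma$ well on a common ball must make small angle), I will show that $P$ has angle $\lesssim\delta$ with $P_{Q(\sbf)}=\{x_n=0\}$. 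Hence $P$ is the graph of an affine function $L:\mathbb{R}^{n-1}\to\mathbb{R}$ of small slope, and taking this $L$ as the candidate in $\hat\gamma$ yields
\[
\hat\gamma(z,\tau,r)^2\,\lesssim\,r^{-(d+2)}\iint_{C'_r(z,\tau)}|\psi(y,s)-L(y)|^2\,dy\,ds.
\]

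For the contribution from $\pi(F)\cap C'_r$, I use $\psi=\hat\psi$ there, so $(y,\psi(y,s),s)\in F\subset\Sigma$, and since $P$ has small slope $|\psi(y,s)-L(y)|\approx d_p((y,\psi(y,s),s),P)$. Changing variables from $dy\,ds$ to $d\sigma$ (justified by the Lip(1,1/2) property of $\psi$ and Remark \ref{r-measures}) produces
\[
\iint_{\pi(F)\cap C'_r}|\psi-L|^2\,dy\,ds\,\lesssim\,\iint_{\Sigma\cap C_{r'}(X,t)}d_p(Y,s,P)^2\,d\sigma(Y,s)
\]
for some $r'\lesssim r$ (using the hypothesis $d_p(z,\tau,\pi(X,t))\lesssim r$). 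The near-minimizing property of $P$ together with a scale-transfer argument, based on the inclusion $\Delta(X,t,(K-10)r/10)\subset\Delta(Y,s,Kr/10)$ valid for $(Y,s)\in\Sigma\cap C_r(X,t)$ when $K$ is large, will yield $\gamma(X,t,Kr/10)^2\lesssim\gamma(Y,s,Kr/10)^2$ with a constant depending on $K$; averaging over $(Y,s)\in\Sigma\cap C_r(X,t)$ and dividing by $r^{d+2}$ gives the first term on the right of \eqref{5.17}.

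For each Whitney cube $I_i$ with $i\in I(z,\tau,r)$, I apply $|\psi-L|\le|\psi-B_i|+|B_i-L|$. From the construction of $\psi$ as a $\{\nu_j\}$-partitioned average of the $B_j$'s, together with Lemma \ref{lemma2.21}(ii) and the fact that only finitely many $\nu_j$ are nonzero on $I_i$ (by \eqref{eq1}), I get $|\psi-B_i|\lesssim\epsilon r_i$ on $I_i$. For $|B_i-L|$, the planes $P_{Q(i)}$ and $P$ both approximate $\Sigma$ on a common ball of radius $\sim r$ (since $Q(i)\subset Q(\sbf)$ and $\pi(Q(i))$ lies within a controlled distance of $(z,\tau)$), so Lemma \ref{aux2} forces their angle to be $\lesssim\epsilon$; combining this with a base-point offset estimate anchored at a common point of $\Sigma$ close to $Q(i)$ at scale $\sim r_i$ gives $|B_i-L|\lesssim\epsilon r_i$ throughout $I_i\cap C'_r$. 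Hence $\iint_{I_i\cap C'_r}|\psi-L|^2\,dy\,ds\lesssim\epsilon^2 r_i^{d+2}$, and summing after division by $r^{d+2}$ yields the second term $\sum_{i\in I(z,\tau,r)}\epsilon^2(r_i/r)^{d+2}$.

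The main technical obstacles will be twofold. First, the scale-transfer from $\gamma(X,t,Kr/10)$ to the average of $\gamma(Y,s,Kr/10)^2$ over $\Sigma\cap C_r(X,t)$ requires $K\gg 1$ so that the outer ball $\Delta(Y,s,Kr/10)$ absorbs $\Delta(X,t,Kr/10)$ up to a negligible annulus, and uses the minimizing plane at $(Y,s)$ as a test plane at $(X,t)$. Second, obtaining the sharp per-cube bound $|B_i-L|\lesssim\epsilon r_i$ (rather than the weaker $\epsilon r$) requires not just the angle control from Lemma \ref{aux2} but also an offset controlled at scale $\sim r_i$, which is why the argument must anchor the comparison of $P_{Q(i)}$ and $P$ at a point of $\Sigma$ near the cube $Q(i)$ itself.
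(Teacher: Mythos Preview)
Your overall strategy --- take $P$ minimizing $\gamma(X,t,Kr/10)$, use the corresponding affine function $L$ as a competitor in $\hat\gamma$, and split $C'_r(z,\tau)$ into $\pi(F)$ and the Whitney pieces $I_i$ --- matches the paper exactly, and your treatment of the $\pi(F)$ contribution and of $|\psi-B_i|\lesssim\epsilon r_i$ on $I_i$ is fine. The averaging over $(Y,s)\in\Sigma\cap C_r(X,t)$ at the end is also what the paper does (the paper simply repeats the whole argument with $(Y,s)$ in place of $(X,t)$ and averages, rather than the scale-transfer inequality you sketch).

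The genuine gap is your claimed pointwise bound $|B_i-L|\lesssim\epsilon r_i$ on $I_i$. The plane $P$ is only an $L^2$ minimizer at scale $\sim Kr$; you have no pointwise control on $d_p(\cdot,P)$ at the smaller scale $r_i$ near $Q(i)$, and Lemma \ref{aux2} does not apply because it requires \emph{uniform} approximation on a common ball, which $P$ does not enjoy. In fact the offset between $P_{Q(i)}$ and $P$ near $Q(i)$ can be as large as $C\epsilon r$ (think of accumulating the $\beta_\infty$ errors along the chain of ancestors of $Q(i)$ up to scale $r$), so your anchored base-point argument cannot produce $\epsilon r_i$. Note also that nothing in the hypotheses forces $\gamma(X,t,Kr/10)$ to be small, so you cannot even extract a good average bound on $d_p(\cdot,P)$ over $2Q(i)$ in terms of $\epsilon$ alone.

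The paper does \emph{not} try to bound $|B_i-L|$ pointwise. Instead it writes $T_i\lesssim\hat T_i+\tilde T_i$, where $\hat T_i$ is your $|\psi-B_i|$ term (giving the $\epsilon^2(r_i/r)^{d+2}$ contribution), and
\[
\tilde T_i\,\lesssim\, r^{-d}r_i^d\,\beta_{Q(i)}\,+\,\epsilon^2(r_i/r)^{d+2},\qquad
\beta_{Q(i)}:=\Bigl(\bariint_{2Q(i)}\bigl(d_p(Y,s,P)/r\bigr)^{2/3}\,d\sigma\Bigr)^3,
\]
obtained via Lemma \ref{aux1} (choose well-separated points of $Q(i)$, then average over small neighborhoods). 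The exponent $2/3$ is there precisely to allow a H\"older trick: since the sets $\{2Q(i)\}_{i\in I(z,\tau,r)}$ need \emph{not} have bounded overlap, the paper introduces the counting functions $N_i$ of Lemma \ref{count} and uses \eqref{Niavgbnd.eq}--\eqref{Niptwsbnd.eq} together with H\"older to obtain
\[
\sum_{i\in I(z,\tau,r)} r^{-d}r_i^d\,\beta_{Q(i)}\,\lesssim\,r^{-d}\iint_{\cup_i 2Q(i)}\bigl(d_p(Y,s,P)/r\bigr)^2\,d\sigma\,\lesssim\,\gamma(X,t,Kr/10)^2,
\]
the last step using $\cup_i 2Q(i)\subset C_{\kappa r/10}(X,t)$. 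Thus the offset terms are absorbed into the $\gamma$ term on the right of \eqref{5.17}, not into the $\epsilon^2$ sum. This use of Lemma \ref{count} is the missing ingredient in your proposal.
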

\begin{proof} Let $(z,\tau)\in  C'_{2\kappa R}(x_{ Q(\sbf)},t_{ Q(\sbf)})$, $ I( z,\tau, r )$ and $$(X,t)=(X(z,\tau,r),t(z,\tau,r))\in\Sigma,$$ be as stated in the lemma. We emphasize that by construction the point $(X,t)$ depends on $(z,\tau)$ and $r$. Let $P:=P( X,t, r ) \in \mathcal{P}$ be the time independent plane for which the infimum in the definition of $\gamma(X,t,Kr/10)$ is realized.  Recall the notation $$\ti\Sigma=\{(y,\psi(y,s),s):\ (y,s)\in\mathbb R^n\}.$$
To denote points on the approximating graph, we will in the following also use the notation
$\ti\Sigma(y,s):=(y,\psi(y,s),s)$.

To start the proof we first note that
\begin{align}\label{aeq1}
(\hat\gamma ( z,\tau, r ))^2&\lesssim r^{-d}\iint_{C'_r( z,\tau)}\biggl (\frac{d_p(\tilde\Sigma(y,s),P)}r\biggr )^2\, \d y\d s,
\end{align}
and we introduce
\begin{align}\label{aeq2}
T&:= r^{-d}\iint_{C'_r( z,\tau)\cap \pi(F) }\biggl (\frac{d_p(\tilde\Sigma(y,s),P)}r\biggr )^2\, \d y\d s,\notag\\
T_i&:=r^{-d}\iint_{C'_r( z,\tau)\cap I_i }\biggl (\frac{d_p(\tilde\Sigma(y,s),P)}r\biggr )^2\, \d y\d s.
\end{align}
Using this notation we can continue the estimate in \eqref{aeq1} and write
\begin{align}\label{aeq3}
(\hat\gamma ( z,\tau, r ))^2&\lesssim  T+\sum_{i\in I( z,\tau, r )}T_i.
\end{align}
It is straightforward to lift the integral in the  definition of $T$ to the graph, and to produce the estimate
\begin{align}\label{aeq4}
T&\lesssim r^{-d}\iint_{\pi^{-1}(C'_r( z,\tau))\cap F }\biggl (\frac{d_p((Y,s),P)}r\biggr )^2\, \d \sigma(Y,s)\lesssim (\gamma(X,t,Kr/10))^2.
\end{align}
Note that in this estimate we have also used \eqref{measure}.

Fix  $ i\in I( z,\tau, r )$. Then
$$T_i\lesssim \hat T_i+\ti T_i,$$
where
\begin{align*}
\hat T_i&:=r^{-d}\iint_{C'_r( z,\tau)\cap I_i }\biggl (\frac{d_p(\tilde\Sigma(y,s),P_{Q(i)})}r\biggr )^2\, \d y\d s,
\end{align*}
and
\begin{align*}
\ti T_i&:=r^{-d}r_i^{d}\sup\biggl\{\biggl (\frac {d_p(Y,s,P)}r\biggr )^2:\ (Y,s)\in P_{Q(i)},\ d_p(Y,s,Q(i))\leq cr_i\biggr\}.
\end{align*}
Recall \eqref{2.19-} and \eqref{2.19}. Using that $ P_{Q(i)}$  is the graph of $B_i$,
\begin{align}\label{aeq5}
\hat T_i&=r^{-d}\iint_{C'_r( z,\tau)\cap I_i }\biggl (\frac{|\psi(y,s)-B_i(y,s)|}r\biggr )^2\, \d y\d s\lesssim \epsilon^2r^{-d}r_i^{d}(r_i/r)^2,
\end{align}
where we have used that $|\psi(y,s)-B_i(y,s)|\lesssim \epsilon r_i$ on $I_i$, see Lemma \ref{lemma2.21}.

To estimate $\ti T_i$ we use Lemma \ref{aux1} applied to $Q=Q(i)$. Indeed, let $(Z_j,\tau_j)\in Q(i)$ for $j=0,1,...,n$, be as in the statement of the lemma and let
$L_{Q(i)}=L_{n-1}$  be the spatial $(n-1)$-dimensional plane which passes through $Z_0,Z_1,...,Z_{n-1}$. Then
\begin{align}\label{planeapprox}
\ti T_i\lesssim r^{-d}r_i^{d}\sup_{j\in\{0,...,n-1\}}\biggl \{\biggl (\frac {d_p(Z_j,\tau_j,P)}r\biggr )^2
+\biggl (\frac {d_p(Z_j,\tau_j,P_{Q(i)})}r\biggr )^2\biggr \}.
\end{align}
To see this, we note that  by Lemma \ref{aux1} we have that the points $\{(Z_j,\tau_j)\}$ stay at an ample distance from each other, and the point $(Y_j,s_j)$, realizing the infimum
$$\inf_{(Y,s) \in P_{Q(i)}} d_p((Y,s),(Z_j,\tau_j)),$$
is within $\epsilon \diam Q(i)$ of $(Z_j,\tau_j)$.
Hence, for sufficiently small $\epsilon$, we can ensure that
$$\|(Y_j,s_j)-(Y_{j'},s_{j'})\| \gtrsim \diam Q(i)\mbox{ whenever }{j'}\neq j.$$
In particular, the points $\{(Y_j,s_j)\}$ generate the plane $P_{Q(i)}$, and  for each $(Y,s) \in P_{Q(i)}$ such that $d_p(Y,s,Q(i)) \leq cr_i$, we have
$$ d_p(Y,s,L_{n-1}) \lesssim \max_{j \in \{0,...,n-1\}} d_p(Y_j,s_j,Z_j,\tau_j) = \max_{j \in \{0,...,n-1\}} d_p(Z_j,\tau_j,P_{Q(i)}).$$
An even simpler argument shows that $d_p(L_{n-1},P)$ is controlled by $$\max_{j \in \{0,...,n-1\}} d_p(Z_j,\tau_j, P).$$
In this case we do not need to find a generating
set for $P$. Put together we can conclude that \eqref{planeapprox} holds.

 Using that $Q(i)\in \sbf$ we see that
\begin{align*}
\sup_{j\in\{0,...,n\}}\biggl \{\biggl (\frac {d_p(Z_j,\tau_j,P_{Q(i)})}r\biggr )^2\biggr \}\lesssim\epsilon^2(r_i/r)^2,
\end{align*}
and hence, based on \eqref{planeapprox},
\begin{align}\label{aeq6}
\ti T_i\lesssim r^{-d}r_i^{d}\sup_{j\in\{0,...,n\}}\biggl \{\biggl (\frac {d_p(Z_j,\tau_j,P)}r\biggr )^2\biggr \}
+ r^{-d}r_i^{d}\epsilon^2(r_i/r)^2.
\end{align}
We now note that the statement in \eqref{aeq6} remains true with $(Z_j,\tau_j)$ replaced by any $(\tilde Z_j,\tilde \tau_j)\in \Sigma$ satisfying
$d_p(Z_j,\tau_j,\tilde Z_j,\tilde \tau_j)\leq \eta r_i$ for $\eta=\eta(n,M)$ sufficiently small. This can be seen from the proof of Lemma \ref{aux1} because we are free, at each iteration of the argument, to replace the points 
$(Z_j,\tau_j)$  with sufficiently close points $(\tilde Z_j,\tilde \tau_j)$, as this will preserve the desired properties of the (new) approximating planes and collection of points. Combining this observation, 
\eqref{aeq6}, \eqref{aeq3}, \eqref{aeq4}, \eqref{aeq5} and averaging
over such $(\tilde{Z}_j, \tilde{\tau}_j)$, we conclude that
\begin{multline}\label{aeq7}
(\hat\gamma ( z,\tau, r ))^2 \\[4pt]
\lesssim \, (\gamma(X,t,Kr/10))^2\,+\sum_{i\in I( z,\tau, r )} \epsilon^2r^{-d}r_i^{d}(r_i/r)^2 \,
+\sum_{i\in I( z,\tau, r )} r^{-d}r_i^{d}\beta_{Q(i)}\,,
\end{multline}
where
\begin{align*}\label{aeq7}
\beta_{Q(i)}:=\biggl(\bariint_{2Q(i)} \biggl (\frac {d_p(Y,s,P)}r\biggr )^{2/3}\d\sigma (Y,s)\biggr )^{3}\,.
\end{align*}
We now let $J(i)$ be the subset of $ I( z,\tau, r )$ which consists of those $j$ which are such that
$\diam (Q(j))\leq \diam (Q(i))$ and $2Q(j)\cap 2Q(i)\neq\emptyset$. We let
\begin{align*}
N_i(Y,s):=\sum_{j\in J(i)}\chi_{2Q(j)}(Y,s).
\end{align*}
Then using Lemma \ref{count} we have, for $\beta>1$ given, that there exists a constant $c=c(n,M,\beta)$, $1\leq c<\infty$, such that
\begin{equation}\label{aeq8}
\sum_i(N_i(Y,s))^{-\beta}\chi_{2Q(i)}(Y,s)\le c,
\end{equation}
for all $(Y,s)$ and
\begin{equation}\label{aeq9}
\bariint_{2Q(i)}N_i(Y,s)\, \d\sigma (Y,s)\lesssim 1.
\end{equation}
To estimate $\beta_{Q(i)}$, 
we apply H{\"o}lder's inequality and use \eqref{aeq9} to deduce that
\begin{multline}
\beta_{Q(i)} =
\biggl(\bariint_{2Q(i)} \biggl (\frac {d_p((Y,s),P)}r\biggr )^{2/3}
(N_i(Y,s))^{-2/3}(N_i(Y,s))^{2/3}\d\sigma (Y,s)\biggr )^{3} \\[4pt]
\lesssim \,\biggl(\bariint_{2Q(i)} \biggl (\frac {d_p((Y,s),P)}r\biggr )^{2}(N_i(Y,s))^{-2}\d\sigma (Y,s)\biggr ).
\end{multline}
In particular we can rewrite \eqref{aeq7} as
\begin{multline}
(\hat\gamma ( z,\tau, r ))^2 \,
\lesssim \,(\gamma(X,t,Kr/10))^2\,+\sum_{i\in I( z,\tau, r )} \epsilon^2r^{-d}r_i^{d}(r_i/r)^2\\[4pt]
+ \,r^{-d}\sum_{i\in I( z,\tau, r )}
\iint_{2Q(i)} \biggl (\frac {d_p(Y,s,P)}r\biggr )^{2}(N_i(Y,s))^{-2}\d\sigma (Y,s).
\end{multline}
Then using \eqref{aeq8}  with $\beta = 2$,
we conclude that
\begin{multline}
(\hat\gamma ( z,\tau, r ))^2 
\,\lesssim \,(\gamma(X,t,Kr/10))^2 \,
+\sum_{i\in I( z,\tau, r )} \epsilon^2r^{-d}r_i^{d}(r_i/r)^2 \\[4pt]
+ \,r^{-d} \iint_{\cup_{i\in  I( z,\tau, r )}2Q(i)} \biggl (\frac {d_p(Y,s,P)}r\biggr )^{2}\d\sigma (Y,s).
\end{multline}

Thus, to complete  the proof of the lemma it now suffices to prove the estimate
\begin{align}\label{cubecontain}
r^{-d}\iint_{\cup_{i\in  I( z, \tau, r )}2Q(i)} \biggl (\frac {d_p(Y,s,P)}r\biggr )^{2}\d\sigma (Y,s)
\lesssim (\gamma(X,t,Kr/10))^2.
\end{align}
To this end, we note that in turn, it is enough to prove that, for sufficiently large $\kappa$,
\begin{align}\label{auxx-}
\mbox{$2Q(i) \subseteq C_{\kappa r/10}(X,t)$  for all $i \in I(z,\tau,r)$.}
\end{align}
To establish this inclusion, we 
observe that if $i \in I(z,\tau,r)$ and $(\hat z,\hat \tau) \in I_i$, then $D(\hat z, \hat \tau) \leq D(z,\tau) + r \lesssim r$, and hence $\diam I_i \lesssim r$.  Because $d_p(\pi(Q(i)),I_i) \lesssim \diam I_i$, it follows that
\begin{align*}
d_p(\pi(Q(i)),\pi(X,t)) \leq d_p(\pi(Q(i)),I_i) + d_p(I_i,(z,\tau)) + d_p(\pi(X,t),(z,\tau)) \lesssim r.
\end{align*}
Hence
\begin{align}\label{auxx}
d_p(\pi(Q(i)),\pi(X,t)) \lesssim r.
\end{align}
To see that $2Q(i) \subseteq C_{cr}(X,t)$, choose $(Y,s) \in 2Q(i)$.  
Since $Q(i) \in \sbf$, by \eqref{eq1+} and our previous observation that $\diam I_i \lesssim r$, we obtain that
\begin{equation}\label{5.34}
d(Y,s) \lesssim \diam(Q(i)) \lesssim \diam I_i \lesssim r\,.
\end{equation}
Set $\pi(Y,s) = (y,s)$, $\pi(X,t) = (x,t)$.  If
$d_p(Y,s,X,t)\lesssim r$, then there is nothing to prove.  
Otherwise, if $r\lesssim d_p(Y,s,X,t)$, then using \eqref{5.34}, 
we may apply
 Lemma \ref{lem1} to conclude that
$$ |\pi^{\perp}(Y,s) - \pi^{\perp}(X,t)| \lesssim d_p\big(\pi(Y,s) , \pi(X,t)\big) \lesssim r\,,$$
where in the last step we have also used \eqref{auxx}.
Consequently, $(Y,s) \in C_{cr}(X,t)$, so that \eqref{auxx-} holds for $\kappa$ sufficiently large.

Combining the estimates deduced, we see that
\begin{align}\label{pl42-}
(\hat\gamma ( z,\tau, r ))^2&\lesssim (\gamma(X,t,Kr/10))^2+\sum_{i\in I( z,\tau, r )} \epsilon^2r^{-d}r_i^{d}(r_i/r)^2.
\end{align}
Furthermore, the argument can be repeated with any $(Y,s)\in \Sigma\cap C_r(X,t)$ in place of $(X,t)$. Hence,  averaging gives
\begin{equation*}
(\hat\gamma ( z,\tau, r ))^2 
\lesssim r^{-d}
\iint_{\Sigma\cap C_r(X,t)}(\gamma(Y,s,Kr/10))^2\, \d\sigma(Y,s)
\,+\,\sum_{i\in I( z,\tau, r )} \epsilon^2r^{-d}r_i^{d}(r_i/r)^2,
\end{equation*}
and we have arrived at the conclusion of the lemma.
\end{proof}

\subsection{Proof of Lemma \ref{count++}}
Recall that we are assuming that $\rho\leq \kappa R/10$ (see \eqref{local}).
Let $\Lambda'$ be the set of all $i\in\Lambda$ such 
that $I_i\cap C'_{2\rho}(\hat z,\hat\tau)\neq \emptyset$. Using the construction, \eqref{2.21++bo} and Taylor's theorem we immediately deduce that
  \begin{align}\label{pl41}
  \sum_{i\in\Lambda'} \int_0^{\min\{r_i,\rho\}}\iint_{ I_i \cap C'_{2\rho}(\hat z,\hat\tau)}(\hat\gamma ( z, \tau, r ))^2\frac {\d z\d \tau\d r}r&\lesssim \epsilon^2\rho^{d}.
\end{align}
It therefore remains to estimate $\hat\gamma ( z, \tau, r )$ when either $(z, \tau)\in \pi(F)\cap 
C'_{\rho}(\hat z,\hat\tau)$, or $(z, \tau)\in I_i\cap C'_{\rho}(\hat z,\hat\tau)$, $i\in\Lambda'$, but
$r_i<r\leq\rho$. By \eqref{10-60.eq}, both of these cases are covered if we assume $(z, \tau)\in C'_{\rho}(\hat z,\hat\tau)$ and that $D(z, \tau)/60\leq r\leq \rho$. With this set up we are in a position to apply
Lemma \ref{count+}. To do so  we let $ I( z,\tau, r )$ be the set of all $i\in \Lambda$ such that $C'_r(z,\tau)\cap I_i\neq \emptyset$. Let  $(X,t)=(X(z,\tau,r),t(z,\tau,r))\in\Sigma$ be such that
$(X,t)\in Q(\sbf)$ and $d_p(z,\tau,\pi(X,t))\lesssim r$.  Then, by Lemma \ref{count+}, 
we have estimate \eqref{5.17}, which we reproduce here for the reader's convenience:
\begin{multline}\label{pl42}
(\hat\gamma ( z,\tau, r ))^2
\\[4pt]
\lesssim r^{-d}
\iint_{\Sigma\cap C_r(X,t)}(\gamma(Y,s,Kr/10))^2\, \d\sigma(Y,s)
\,+\,\sum_{i\in I( z,\tau, r )} \epsilon^2(r_i/r)^{d+2}.
\end{multline}
 The estimate in \eqref{pl42} is valid for all $(z, \tau)\in C'_{\rho}(\hat z,\hat\tau)$ satisfying $D(z, \tau)/60\leq r\leq \rho$. Hence,
 \begin{align}\label{pl43}
\iint_{C'_{\rho}(\hat z,\hat\tau)}\int_{D(z, \tau)/60}^{\rho}(\hat\gamma ( z, \tau, r ))^2\frac {\d r\d z\d \tau}r\lesssim A+\epsilon^2 B,
\end{align}
where we define $A$ to equal
 \begin{align*}
 \iint_{ C'_{\rho}(\hat z,\hat\tau)}\int_{D(z, \tau)/60}^{\rho}\biggl (r^{-d}\iint_{\Sigma\cap C_r(X(z,\tau,r),t(z,\tau,r))}(\gamma(Y,s,Kr/10))^2\, \d\sigma(Y,s)\biggr )\frac {\d r\d z\d \tau}r,
\end{align*}
 and  \begin{align*}
B:=& \iint_{ C'_{\rho}(\hat z,\hat\tau)}\int_{D(z, \tau)/60}^{\rho}\biggl (\sum_{i\in I( z,\tau, r )}(r_i/r)^{d+2}\biggr )\frac {\d r\d z\d \tau}r.
\end{align*}
Note that in $A$ we have written $(X(z,\tau,r),t(z,\tau,r))$ to highlight the dependence on $(z,\tau,r)$.

To estimate $A$ we note that for any $(Y,s,z,\tau,r)$ arising in the integral we have $$d_p(z,\tau,\pi(Y,s))\lesssim r,$$
which also implies that  $D(Y,s) \lesssim D(z,\tau) + r \lesssim  r$. Using this we see that $A$ is bounded from above by
 \begin{multline*}
\iint_{ \Sigma\cap\pi^{-1}(C'_{K\rho}(\hat z,\hat\tau))}\int_{c^{-1}D(Y, s)}^\rho r^{-d}\biggl (\iint_{C_{cr}(\pi(Y,s))}\, \d z\d\tau\biggr ) (\gamma(Y,s,Kr/10))^2
\frac {\d r\d\sigma(Y,s)}r\\[4pt]
\lesssim \iint_{ \Sigma\cap\pi^{-1}(C'_{K\rho}(\hat z,\hat\tau))}\int_{c^{-1}D(Y, s)}^\rho(\gamma(Y,s,Kr/10))^2 \frac {\d r\d\sigma(Y,s)}r\\[4pt]
\lesssim \iiint_{E(\hat z,\hat\tau,\rho)} (\ga  ( Z, \tau, Kr))^2 \, \frac{\d \si ( Z, \tau)\d r} {r}.
\end{multline*}

To estimate $B$ we first note  that if $i\in I( z,\tau, r )$, then $r_i\lesssim r$. Hence
 \begin{align*}
\sum_{i\in I( z,\tau, r )}(r_i/r)^{d+2}\lesssim \frac 1 {r^{d+1}}\sum_{i\in I( z,\tau, r )}r_i^{d+1},
\end{align*}
and we see that
\begin{align*}
B&\lesssim \iint_{ C'_{\rho}(\hat z,\hat\tau)}\int_{D(z, \tau)/60}^{\rho}\biggl (\sum_{i\in I( z,\tau, r )}r_i^{d+1}\biggr )\frac {\d r\d z\d \tau}{r^{d+2}}\notag\\
&\lesssim  \sum_{i\in \Lambda,\ I_i\cap  C'_{2\rho}(\hat z,\hat\tau)\neq\emptyset}r_i^{d+1}\iint_{ C'_{\rho}(\hat z,\hat\tau)}\int_{r_i/c}^{\infty}\mathbbm{1}_{\{(z,\tau): d_p(I_i,(z,\tau)) \lesssim r\}}(z,\tau)\frac {\d r \d z \d \tau}{r^{d+2}}\notag\\
&\lesssim  \sum_{i\in \Lambda,\ I_i\cap  C'_{2\rho}(\hat z,\hat\tau)\neq\emptyset}r_i^{d+1}\int_{r_i/c}^{\infty}(r_i+r)^d\frac {\d r}{r^{d+2}}\notag\\
&\lesssim  \sum_{i\in \Lambda,\ I_i\cap  C'_{2\rho}(\hat z,\hat\tau)\neq\emptyset}r_i^{d}\lesssim \rho^d.
\end{align*}
We can justify the second inequality in this deduction as follows.  The bounds of integration for the integral in $r$ hold because $D(z,\tau) \lesssim r \approx r_i+r$ when $i \in I(z,\tau,r)$, and the indicator appears because when $i \in I(z,\tau,r)$, $d_p(I_i,(z,\tau)) \lesssim r$ must hold. This completes the proof of Lemma \ref{count++}.

\subsection{Proof of Lemma \ref{pushlemma1}} To prove Lemma \ref{pushlemma1} we fix $(\hat z,\hat\tau)\in\mathbb R^n$ and $\rho>0$, and  we divide the proof into the following cases:
\begin{align}\label{cases}
(i)&\ C'_{2\rho}(\hat z,\hat\tau)\subset C'_{2\kappa R}(x_{ Q(\sbf)},t_{ Q(\sbf)}),\ \rho\leq\kappa R/10,\notag\\
(ii)&\ C'_{2\rho}(\hat z,\hat\tau)\subset C'_{2\kappa R}(x_{ Q(\sbf)},t_{ Q(\sbf)}),\ \rho\geq\kappa R/10,\notag\\
(iii)&\ C'_{2\rho}(\hat z,\hat\tau)\cap(\mathbb R^n\setminus C'_{2\kappa R}(x_{ Q(\sbf)},t_{ Q(\sbf)}))\neq\emptyset.
\end{align}
If $(\hat z,\hat\tau,\rho)$ is as in $(i)$ then the estimate of Lemma \ref{pushlemma1} follows immediately from Lemma \ref{count++} and the fact that $\Sigma$ is parabolic UR. Assume that $(\hat z,\hat\tau,\rho)$ is as in $(ii)$ and consider $(z, \tau)\in C'_{\rho}(\hat z,\hat\tau)$, $\rho\geq \kappa R/10$. In this case
$$\hat\nu(\hat z,\hat\tau,\rho)\lesssim\hat\nu(x_{ Q(\sbf)},t_{ Q(\sbf)},\kappa R)\lesssim (\epsilon^2+||\nu||)(\kappa R)^d\lesssim (\epsilon^2+||\nu||)\rho^d$$
by a covering argument and $(i)$. Assume that $(\hat z,\hat\tau,\rho)$ is as in $(iii)$. Recall that
$\psi\equiv 0$ on $\mathbb R^n\setminus C'_{4\kappa R}(x_{ Q(\sbf)},t_{ Q(\sbf)})$ and hence we can without loss of generality assume that
$$C'_{2\rho}(\hat z,\hat\tau)\cap C'_{4\kappa R}(x_{ Q(\sbf)},t_{ Q(\sbf)}))\neq\emptyset.$$
This case can be handled as in \eqref{pl41} by using the construction, \eqref{2.21++bo} and Taylor's theorem. We omit the remaining `routine' details and claim that the proof of Lemma \ref{pushlemma1} is complete.


Lemma \ref{pushlemma1} also proves that $\psi$ is a {\it regular} Lip(1,1/2) graph.

\begin{lemma}\label{pushlemma2}
$$\| D^{1/2}_t \psi \|_* \,\lesssim (\delta^2+||\nu||)^{1/2}.$$
In particular,
$ \psi_{\sbf}:=\psi = \psi ( x, t ) : \mathbb R^{n-1}\times\mathbb R\to \mathbb R$ is a regular  Lip(1,1/2)  function with parameters $b_1\lesssim\delta$ and $b_2=b_2(n,M,\Gamma)$.
\end{lemma}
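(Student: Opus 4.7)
The plan is to combine Lemma \ref{pushlemma1} with a parabolic Dorronsoro-type characterization of regular Lip(1,1/2) functions that is by now standard in this line of work. Specifically, from Lewis--Murray \cite{LM} (see also \cite{H,HL}, and Section~2 of \cite{HLN1}), one has, for any Lip(1,1/2) function $\psi:\mathbb{R}^{n-1}\times\mathbb{R}\to\mathbb{R}$, the $BMO$--$L^2$ type bound
\[
\|D^{1/2}_t\psi\|_{*}^{2} \,\lesssim\, \sup_{(\hat z,\hat\tau)\in\mathbb{R}^n,\,\rho>0}\,\rho^{-d}\,\hat\nu(\hat z,\hat\tau,\rho),
\]
where $\hat\nu$ is exactly the $t$-independent flatness quantity defined in \eqref{pl11n}, and the implicit constant depends only on $n$ and the Lip(1,1/2) norm of $\psi$. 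Since Lemma \ref{graphlip.lem} already gives the Lip(1,1/2) norm $\lesssim\delta\ll 1$, this implicit constant is under control.

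With that inequality in hand, the rest is immediate. Lemma \ref{pushlemma1} provides the uniform Carleson estimate
\[
\rho^{-d}\,\hat\nu(\hat z,\hat\tau,\rho)\,\lesssim\,\epsilon^2+\|\nu\|,
\]
valid for every $(\hat z,\hat\tau)\in\mathbb{R}^n$ and every $\rho>0$. Combining the two displays gives
\[
\|D^{1/2}_t\psi\|_{*}^{2}\,\lesssim\,\epsilon^2+\|\nu\|,
\]
and since $\epsilon\ll\delta$ we conclude $\|D^{1/2}_t\psi\|_{*}\,\lesssim\,(\delta^{2}+\|\nu\|)^{1/2}$, as claimed.

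For the final assertion, the bound $b_1\lesssim\delta$ is exactly Lemma \ref{graphlip.lem}. For $b_2$, parabolic uniform rectifiability of $\Sigma$ with constant $\Gamma$ gives $\|\nu\|\leq\Gamma$ by Definition \ref{def1.UR}, and since $\delta\ll 1$ we have $\delta^2\lesssim 1$, hence
\[
b_2\,\lesssim\,(\delta^{2}+\|\nu\|)^{1/2}\,\lesssim\,(1+\Gamma)^{1/2}\,=:\,b_2(n,M,\Gamma),
\]
which depends only on $n$, $M$, and $\Gamma$, as required.

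The only potential subtlety — and really the ``main obstacle'' — is matching our setup to the cited characterization. One needs the version of the $BMO$ bound that uses the $t$-independent approximators in the definition of $\hat\gamma$ (the infimum in \eqref{pl11n} being taken over affine functions of the spatial variable alone), which is precisely the formulation appropriate for regular Lip(1,1/2) graphs. This is exactly the form proved in \cite{LM} and re-derived in the form we use in \cite{HLN1}, so it applies directly; no new analytic input is needed beyond Lemma \ref{pushlemma1}.
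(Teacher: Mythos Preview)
Your proposal is correct and follows essentially the same approach as the paper: use Lemma~\ref{pushlemma1} to establish that $\hat\nu$ is a parabolic Carleson measure with norm $\lesssim \epsilon^2+\|\nu\|$, then invoke the known characterization (from \cite{LM,H,HL,HLN,HLN1}) that converts this Carleson bound into the BMO estimate for $D^{1/2}_t\psi$. The paper cites Section~2 and the beginning of Section~3 of \cite{HLN} specifically (rather than \cite{HLN1}), and also flags one technical point you omit: those references work with the slice-wise surface measure $\sigma^{\bf s}$, which is equivalent to $\sigma$ on Lip(1,1/2) graphs by Remark~\ref{r-measures}, so the discrepancy causes no trouble.
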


To prove Lemma \ref{pushlemma2}, we observe that
$\hat{\nu}(\hat z, \hat \tau, \rho)$ is a (parabolic) Carleson measure on 
$\mathbb{R}^n \times \mathbb{R}_+$, by Lemma \ref{pushlemma1}. 
 After making this observation, one may use the arguments in Section 2 of \cite{HLN1} or the arguments in  Section 2 and at the beginning of Section 3 in \cite{HLN},  to produce the desired estimate on $\| D^{1/2}_t \psi \|_*$. Note that the measure used in \cite{HLN} is $\sigma^{\bf s}$, a measure which is equivalent to $\sigma$ in the setting of Lip(1,1/2) graphs and, consequently, this change does not change the validity of the arguments (see Remark \ref{r-measures}).

\begin{proof}
	We follow the argument in \cite{HLN1}; for the reader's convenience we include a detailed account of their result and make the appropriate changes for our purposes.
	
	Throughout the proof we will use simple variants of the following elementary estimate:
	\begin{equation}\label{eq.osscilationestimate}
		\sup_{C_r'(x,t)} \psi - \inf_{C_r'(x,t)}\psi \lesssim \delta r, \qquad (x,t)\in \RR^n,
	\end{equation}
	where the implicit constant is the same as for the $Lip(1,1/2)$ constant for $\psi$ obtained in Lemma \ref{graphlip.lem}.
	
	For the rest of the proof we will fix a point $(z,\tau)\in \RR^n$ and a scale $\rho>0$. Associated to these parameters we choose a cut-off function $\beta=\beta_{z,\tau,\rho} \in C_c^\infty(C_{2\rho}'(z,\tau))$, $0\leq \beta \leq 1$, and $\beta \equiv 1$ in $C_{3\rho/2}'(z,\tau)$. To simplify notation we will also set 
	$$
	\CC:= C_\rho'(z,\tau).
	$$
	
	We define 
	$$
	\psi_1 +\psi_2 := [(\psi-\psi_{\CC})\beta] + [ \beta\psi_{\CC} + (1-\beta)\psi],
	$$ 
	where as usual $\psi_\CC$ denotes the average (with respect to Lebesgue measure) of $\psi$ over $\CC$.
	
	Recall that we aim to control the $BMO$ norm of $D_t^{1/2}\psi$, or in other words, we wish to prove
	\begin{equation}\label{eq.BMOestimate}
		\iint_{\CC} |D_s^{1/2}\psi - (D_s^{1/2}\psi)_{\CC}|\, \d y \d s \lesssim (\delta^2+ \| \nu\|)\rho^d,
	\end{equation}
	with appropriate dependence of the implicit constants on allowable parameters\footnote{As will be seen from the proof, the implicit constants will depend on the same parameters as the $Lip(1,1/2)$ estimate from Lemma \ref{graphlip.lem}.}.
	
	To prove \eqref{eq.BMOestimate}, add and subtract a constant $\alpha$ (to be determined) and use the decomposition $\psi=\psi_1+\psi_2$ from before to get 
	\begin{equation*}
		\begin{split}
			\iint_{\CC} |D_s^{1/2}\psi- (D_s^{1/2}\psi)_{\CC}|\, \d y \d s & \leq \iint_{\CC} |D_s^{1/2}\psi_2 - \alpha|\, \d y \d s + |\CC||\alpha-(D_s^{1/2}\psi)_{\CC}| \\
			& \qquad + \iint_{\CC} |D_s^{1/2} \psi_1|\, \d y \d s.
		\end{split}
	\end{equation*}
	Notice that for the middle term we have 
	$$
	|\alpha - (D_s^{1/2}\psi)_{\CC}| \leq |\CC|^{-1}\iint_{\CC} |\alpha -D_s^{1/2}\psi_2 - D_s^{1/2}\psi_1|\, \d y \d s,
	$$
	and so 
	\begin{equation}
		\begin{split}
			\iint_{\CC} |D_s^{1/2}\psi- (D_s^{1/2}\psi)_{\CC}|\, \d y \d s& \leq 2\iint_{\CC} |D_s^{1/2}\psi_2 - \alpha|\, \d y \d s\\
			&\quad + 2\iint_{\CC} |D_s^{1/2} \psi_1|\, \d y \d s\\
			& =: I + II
		\end{split}
	\end{equation}

	To handle $II$ we need some preparation. Introduce first an approximate identity $P_\lambda$ and abuse notation to denote both the operator and it's kernel in this fashion. In particular, we require $P\in C_c^\infty(C_1'(0,0))$, even and nonnegative with $\iint_{\RR^n} P\, \d y \d s =1$, and as usual set $P_\lambda(y,s)= \lambda^{-d}P(y/\lambda, s/\lambda^2)$. 
	
	In what follows $\phi_\lambda(y,s)=\lambda^{-d}\phi(y/\lambda, s/\lambda^2)$ may denote either of the following functions:
	$$
	\lambda \dfrac{\partial P}{\partial \lambda}, \qquad \lambda^2 \dfrac{\partial P}{\partial s}, \qquad \lambda^2 \dfrac{\partial^2 P}{\partial y_i^2}\quad \textup{for} \quad 1\leq i\leq n-1.
	$$
	The main features of these functions for us are: 
	\begin{itemize}
		\item $\displaystyle \iint_{\RR^n} |\phi_{\lambda}|\, \d y\d s=\iint_{\RR^n} |\phi|\, \d y\d s \lesssim 1, \quad \sup_{\RR^n} |\phi_\lambda| = \lambda^{-d}\sup_{\RR^n} |\phi| \lesssim \lambda^{-d}.$
		\item For each $1\leq i\leq n$ we have 
		$$
		\iint_{\RR^n} \phi\, \d y\d s =0 = \iint_{\RR^n} y_i\phi\, \d y\d s.
		$$
	\end{itemize}
	By the second property of $\phi$ above we have, for any linear function $L$ of $y$ only, 
	\begin{equation*}
		\begin{split}
			(\phi_\lambda*\psi)^2(y,s) & = (\phi_\lambda *(\psi-L))^2(y,s)\\
			& \leq \| \phi\|_\infty^2 \lambda^{-2d} \left(\iint_{C_\lambda'(y,s)} |\psi-L|\, \d x\d t\right)\\
			& \lesssim_{n} \lambda^{-d} \iint_{C_{\lambda}' (y,s)} |\psi-L|^2\, \d x\d t,
		\end{split}
	\end{equation*}
	where we used H\"older's inequality in the first line, and Cauchy-Schwarz together with the first property of $\phi$ above on the second.
	
	Multiplying by $\lambda^{-3}$ and integrating the above equation over $C_\rho'(z,\tau)\times (0,\rho)$ we arrive at
	$$
	\int_0^\rho \iint_{C_\rho'(z,\tau)} \lambda^{-3}(\phi_\lambda* \psi)^2\, \d y\d s\d\lambda \lesssim_n \int_0^\rho \iint_{C_\rho(z,\tau)}\lambda^{-1} \hat\gamma(y,s)\, \d y\d s\d\lambda,
	$$
	or, in other words,
	\begin{equation}\label{eq.phiestimate}
		\int_0^\rho \iint_{C_\rho'(z,\tau)} \lambda^{-3}(\phi_\lambda* \psi)^2\, \d y\d s\d\lambda\lesssim_n \| \hat{\nu}\|\rho^d.
	\end{equation}
	We are now ready to begin estimating $II$ in \eqref{eq.BMOestimate}. Fix $0<\varepsilon<\rho$, then using the fundamental theorem of calculus 
	\begin{equation}
		\begin{split}
			\iint_{\RR^n} (D_s^{1/2}P_\varepsilon\psi_1)^2\, \d y\d s & = \iint_{\RR^n} (D_s^{1/2}P_\rho\psi_1)^2\, \d y\d s \\
			& \quad - \int_{\varepsilon}^\rho\iint_{\RR^n} 2[D_s^{1/2}(\partial_\lambda P_\lambda\psi_1)][D_s^{1/2}(P_\lambda\psi_1)]\, \d y\d s\d\lambda\\
			& = \iint_{\RR^n} (D_s^{1/2}P_\rho\psi_1)^2\, \d y\d s \\
			& \quad + c\int_{\varepsilon}^\rho \iint_{\RR^n} \Hil\left( \partial_\lambda (P_\lambda \psi_1)\right) \partial_t(P_\lambda \psi_1)\, \d y\d s \d\lambda,\\
			& = II_1 + II_2.
		\end{split}
	\end{equation}
	where $\Hil$ denotes the Hilbert transform and we have used the following facts:
	$$
	D_s^{1/2}D_s^{1/2}= \Hil \partial_s, \qquad \Hil^*= -\Hil.
	$$
	To handle $II_1$ we first claim that it's enough to show
	\begin{equation}\label{eq.psi1estimate}
		|D_s^{1/2}P_\rho \psi_1(y,s)| \lesssim_n \delta \min\left\{ 1, \dfrac{\rho^3}{|s-\tau|^{3/2}}\right\}  \mathbbm{1}_{|y-z|\leq 4\rho}.
	\end{equation}
	Assuming this for the moment, we integrate in $(y,s)$ to get 
	\begin{equation}
		II_1 \lesssim \iint_{\substack{|z-y|<4\rho\\ s\in \RR}} \delta^2 \min\left\{ 1, \dfrac{\rho^3}{|\tau-s|^{3/2}}\right\}^2\, \d y \d s \lesssim \delta^2 \rho^d,
	\end{equation}
	which is the right type of bound (see \eqref{eq.BMOestimate}). Therefore, it remains only to prove \eqref{eq.psi1estimate}. For this purpose recall that for nice functions\footnote{Such as $P_\rho \psi_1\in C_c^\infty(\RR^n)$.} $f$ we defined
	$$
	D_s^{1/2} f(y,s):= c\int_{\RR} \dfrac{f(y,t)-f(y,s)}{|t-s|^{3/2}}\, \d t.
	$$
	In particular, since $\psi_1$ is supported on $C_{2\rho}'(z,\tau)$, if $|z-\tau|>4\rho$ then $D_s^{1/2}P_\rho \psi_1(y,s)=0$. We have thus reduced \eqref{eq.psi1estimate} to proving
	\begin{equation}\label{dhalftimedecayspatialclose.eq}
	|D_s^{1/2} P_\rho \psi_1(y,s)|\lesssim_n \delta \min\left\{ 1, \dfrac{\rho^3}{|s-\tau|^{3/2}}\right\}, \qquad |y-z|\leq 4\rho.
	\end{equation}
	First we note that 
	\begin{equation}
		|\partial_s P_\rho \psi_1(y,s)|\leq \dfrac{\delta}{\rho}, \qquad (y,s)\in \RR^n.
	\end{equation}
	This follows from properties of the convolution:
	$$
	|\partial_s P_\rho \psi_1(y,s)|= \rho^{-2}| (\partial_sP)_\rho\psi_1(y,s)| \leq \rho^{-2}\| (\partial_sP)_\rho\|_{L^1(\RR^n)} \| \psi_1\|_{L^\infty(\RR^n)} \lesssim \dfrac{\delta}{\rho}, 
	$$
	where we used \eqref{eq.osscilationestimate} and the definition of $\psi_1$ in the last inequality.
	
	We first get the uniform bound; for this we break up the integral in the definition of the half-order time derivative and the above estimate, together with \eqref{eq.osscilationestimate}, to get
	\begin{equation*}
		\begin{split}
			|D_s^{1/2} P_\rho \psi_1(y,s)| & \leq \int_{|t-\tau|<\rho^2}  \dfrac{|P_\rho\psi_1(y,t)-P_\rho\psi_1(y,s)|}{|t-s|^{3/2}}\, \d t \\
			& \quad + \int_{|t-\tau|>\rho^2}  \dfrac{|P_\rho\psi_1(y,t)-P_\rho\psi_1(y,s)|}{|t-s|^{3/2}}\, \d t\\
			& \lesssim \dfrac{\delta}{\rho}\int_{|t-\tau|<\rho^2} |t-s|^{-1/2}\, \d t + \delta\rho \int_{|t-\tau|>\rho^2} |t-s|^{-3/2}\, \d t\\
			& \lesssim \delta.
		\end{split}
	\end{equation*}
	Therefore it remains to get the decay in estimate \eqref{dhalftimedecayspatialclose.eq}, and, for this, we need only consider\footnote{Perhaps worsening our implicit constant by an universal factor.} $|s-\tau|>100\rho^2$. For such $s$ we have $P_\rho\psi_1(y,s)=0$ and so 
	\begin{equation*}
		\begin{split}
			|D_s^{1/2}P_\rho \psi_1(y,s)| & \leq \int_{\RR} \dfrac{|P_\rho \psi_1(y,t)|}{|t-s|^{3/2}}\, \d t = \int_{|t-\tau|<16\rho^2}\dfrac{|P_\rho\psi_1(y,t)|}{|t-s|^{3/2}}\, \d t \\
			& \lesssim \rho \delta \int_{|t-\tau|<16\rho^2} |t-s|^{-3/2}\, \d t \approx \rho\delta \rho^2|\tau-s|^{-3/2},
		\end{split}
	\end{equation*}
	where we again used \eqref{eq.osscilationestimate} in the second line and the fact that $|t-s|\approx|\tau-s|$ with the given restrictions on $s,t$. This is exactly the desired decay in \eqref{dhalftimedecayspatialclose.eq}, and thus we've finished controlling $II_1$.
	
	$II_2$ is a bit more elaborate. We first apply Cauchy-Schwarz to get 
	\begin{equation*}
		\begin{split}
			|II_2|^2 &\lesssim_n \left(\int_\varepsilon^\rho \iint_{\rn} \lambda^{-1}|\Hil(\partial_\lambda(P_\lambda\psi_1))|^2\, \d y\d s\d\lambda \right) \\
			&\quad \times \left(\int_\varepsilon^\rho \iint_{\rn} \lambda |\partial_t(P_\lambda\psi_1)|^2\, \d y\d s\d\lambda\right)\\
			& =: II_{21}\cdot II_{22}.
		\end{split}
	\end{equation*}
	From the $L^2$ boundedness of the Hilbert transform on $\RR$ we see that 
	\begin{equation}\label{eq.II21}
		II_{21}\leq \int_\varepsilon^\rho \iint_{\RR^n} \lambda^{-1}|\partial_\lambda(P_\lambda \psi_1)|^2\, \d y\d s\d\lambda.
	\end{equation}
	With this in hand, the estimates for $II_{21}$ and $II_{22}$ will follow the same lines, the different powers of $\lambda$ accounting for the difference in scaling of $\partial_t P_\lambda$ and $\partial_\lambda P_\lambda$. In particular one should keep in mind the fact that both of these functions ``annihilate" first order (spatial) polynomials (see the properties of $\phi$ above).
	
	With the above in hand, we focus on $II_{21}$. 
	
	For fixed $(y,s)\in \CC$ consider the linear polynomial 
	$$
	L(x):= \beta(y,s)+ \sum_{i=1}^{n-1} \partial_{y_i}\beta(y,s)(y_i-x_i), \qquad x\in \RR^{n-1}.
	$$
	
	Using the alluded properties of $\partial_\lambda P_\lambda$ and the triangle inequality we see
	\begin{equation}
		\begin{split}
			|\partial_\lambda(P_\lambda \psi_1)(y,s)| & \leq \left| \iint_{\RR^n} \partial_\lambda P_\lambda(y-x,s-t)[\psi(x,t)-\psi_{\CC}][\beta(x,t)-L(x)]\, \d x\d t\right|\\
			& \quad + \sum_{i=1}^{n-1} \left| \partial_{y_i}\beta(y,s)\iint_{\RR^n} (y_i-x_i)\partial_\lambda P_\lambda(y-x,s-t)[\psi(x,t)-\psi(y,s)]\, \d x\d t \right|\\
			& \quad + |\beta(y,s)\partial_\lambda P_\lambda\psi (y,s)| 
			=: A+B+C.
		\end{split}
	\end{equation}
	To handle $A$ recall that $\| \partial_\lambda P_\lambda\|_{L^1}\lesssim \lambda^{-1}$, and \eqref{eq.osscilationestimate} (together with the fact that the integration is taking place over $C_\lambda'(y,s)$ and $\lambda<\rho$) to get 
	$$
	A \lesssim \lambda^{-1} \delta \rho \sup_{(x,t)\in C_\lambda(y,s)} |\beta(x,t)-L(x)|.
	$$
	To control the last term we appeal to Taylor's theorem: 
	\begin{equation*}
		\begin{split}
			|\beta(x,t)-L(x)| & \leq \left|\beta(x,t)-\beta(y,t)-\sum_{i=1}^{n-1}\partial_{y_i}\beta(y,t)(y_i-x_i) \right| \\
			&\quad + \left| \beta(y,t)-\beta(y,s)\right| + \sum_{i=1}^{n-1}\left| \partial_{y_i}\beta(y,t)-\partial_{y_i}\beta(y,s)\right||y_i-x_i|  \\
			& \lesssim \| D^2_y \beta(\cdot, t)\|_{L^\infty(\RR^{n-1})} |x-y|^2 + \| \partial_s \beta (y,\cdot)\|_{L^\infty(\RR)}|t-s| \\
			& \quad + \| \partial_s D_y \beta(y,\cdot)\|_{L^\infty(\RR)} |x-y||t-s|\\
			& \lesssim \rho^{-2}\lambda^2 + \rho^{-2}\lambda^2+ \rho^{-3} \lambda^3\\
			& \leq \rho^{-2}\lambda^2,
		\end{split}
	\end{equation*}
	where we used the derivative bounds for $\beta$ and in the second to last line the fact that $\lambda<\rho$. Plugging this estimate, we arrive at 
	$$
	A\lesssim \dfrac{\delta \lambda}{\rho}.
	$$
	
	Using again the derivative estimates for $\beta$ and \eqref{eq.osscilationestimate} we see 
	$$
	B\lesssim \rho^{-1} \lambda \lambda^{-1} \delta \lambda = \dfrac{\delta\lambda}{\rho}.
	$$
	
	Trivially, since $\beta\leq 1$ we have 
	$$
	C\lesssim |\partial_\lambda P_\lambda  \psi(y,s)|,
	$$
	and so, combining all these estimates into \eqref{eq.II21} we arrive at 
	\begin{equation*}
		\begin{split}
			II_{21} & \lesssim \int_{\varepsilon}^\rho \iint_{C_{4\rho}'(z,\tau)}\lambda^{-1}\left( \dfrac{\delta\lambda}{\rho} + |\partial_\lambda P_\lambda  \psi(y,s)|\right)^2\, \d y\d s \d\lambda\\
			& \lesssim \int_0^\rho \iint_{C_{4\rho}'(z,\tau)}\left(  \dfrac{\delta^2\lambda}{\rho^2} + \lambda^{-1}|\partial_\lambda P_\lambda \psi(y,s)|^2\right) \, \d y\d s \d\lambda\\
			& \lesssim \delta^2\rho^d + \| \hat{\nu}\|\rho^d,
		\end{split}
	\end{equation*}
	where we have used \eqref{eq.phiestimate} in the last line with $\phi= \lambda\partial_\lambda P_\lambda$.

	To handle $II_{22}$ the only change is that $\| \partial_s P_\lambda\|_{L^1(\RR^n)}\lesssim \lambda^{-2}$, which gives
	$$
	|\partial_s P_\lambda \psi_1(y,s)| \lesssim \dfrac{\delta}{\rho} + |\partial_s P_\lambda \psi(y,s)|,
	$$
	and thus 
	$$
	II_{22}\lesssim \delta^2\rho^d + \|\hat{\nu}\|\rho^d.
	$$
	These last two estimates, in turn, give 
	$$
	II_2 \lesssim (\delta^2+\|\hat{\nu}\|)\rho^{d}.
	$$
	Combining these estimates we have 
	$$
	\iint_{\RR^n} |D_s^{1/2}P_\varepsilon \psi_1|^2\, \d y\d s \lesssim (\delta^2+ \|\hat{\nu})\rho^{d},
	$$
	with bounds uniform in $\varepsilon$. In fact the implicit constants only depend on the parameters upon which the $Lip(1,1/2)$ bound in Lemma \ref{graphlip.lem} depends.
	
	By weak compactness, there is a subsequence (which we denote the same) and an $h\in L^2(\RR^n)$ such that $D_s^{1/2}P_\varepsilon \psi_1 \to h$, as $\varepsilon\to 0^+$. Since $P_\varepsilon\psi_1 \in C_c^\infty(\RR^n)$ and $\psi_1$ has compact support we see that $I_{1/2}*(D_s^{1/2} P_\varepsilon \psi_1)=P_\varepsilon\psi_1$ converges pointwise, uniformly, everywhere to $\psi_1$ and to $I_{1/2}h$ (here $I_{1/2}$ is the fractional integral of order $1/2$ in the time variable, appropriately normalized). This shows that $D_s^{1/2}\psi_1$ exists, and satisfies the estimate
	\begin{equation*}
		\iint_{\RR^n} |D_s^{1/2}\psi_1|^2\, \d y\d s\lesssim (\delta^2+\|\hat{\nu}\|)\rho^{d}.
	\end{equation*}
	
	Therefore
	$$
	II=\iint_{\CC} |D_s^{1/2}\psi_1|\, \d y\d s \lesssim \rho^{d/2} \left(\iint_{\CC}|D_s^{1/2}\psi_1|^2\, \d y\d s\right)^{1/2}\lesssim (\delta^2+\|\hat{\nu})^{1/2}\rho^d.
	$$
	
	It thus remains to control $I$ in \eqref{eq.BMOestimate}. First, notice that for $(y,s)\in C_{3\rho/2}'(z,\tau)$ we have $\psi_2(y,s)=\psi_{\CC}$, by definition of $\psi_2$, so that in particular the integral defining $D_s^{1/2}\psi_2$ is always convergent for such points. By the decomposition $\psi=\psi_1+\psi_2$ we conclude that $D_s^{1/2}\psi$ exists in $\CC$.
	
	Write 
	$$
	D_s^{1/2}\psi_2(y,s) = c\int_{|t-\tau|>9\rho^2/4} \dfrac{\psi_2(y,t)-\psi_2(y,s)}{|t-s|^{3/2}}\, \d t, \qquad (y,s)\in \CC,
	$$
	and set $\alpha$ to be 
	$$
	\alpha:= c\int_{|t-\tau|>9\rho^2/4} \dfrac{\psi(z,t)-\psi(z,\tau)}{|t-\tau|^{3/2}}\, \d t.
	$$
	If we can show, for this value of $\alpha$,
	\begin{equation}\label{eq.psi2estimate}
		|D_s^{1/2}\psi_2(y,s) -\alpha|\lesssim \delta, \qquad (y,s)\in \CC,
	\end{equation}
	then integrating would give 
	$$
	I= \iint_{\CC} |D_s^{1/2}\psi_2 -\alpha|\, \d y\d s \lesssim \delta\rho^d,
	$$
	which combined with the estimate for $II$, gives the desired $BMO$ bound in \eqref{eq.BMOestimate}. The rest of the proof is then devoted to showing \eqref{eq.psi2estimate}. 
	
	First notice that for $(y,s)\in \CC$ and $|t-\tau|>9\rho^2/4$, we have $|t-\tau|\approx |t-s|$; we will exploit this repeatedly below without mention.
	
	From the above formulas we have 
	\begin{equation*}
		\begin{split}
			|D_s^{1/2}\psi_2(y,s)-\alpha|  = c\left| \int_{|t-\tau|>9\rho^2/4} \left( \dfrac{\psi_2(y,t)-\psi_2(y,s)}{|t-s|^{3/2}} - \dfrac{\psi(z,t)-\psi(z,\tau)}{|t-\tau|^{3/2}}\right) \, \d t\right|
		\end{split}
	\end{equation*}
	Recall $\psi_2=(\psi_{\CC}-\psi)\beta + \psi$, so that we can rewrite the quantity in parentheses as
	$$
	\dfrac{(\psi_{\CC}-\psi(y,t))\beta(y,t)}{|t-s|^{3/2}} - \dfrac{(\psi_{\CC} -\psi(y,s))\beta(y,s)}{|t-s|^{3/2}} + \left(\dfrac{\psi(y,t)-\psi(y,s)}{|t-s|^{3/2}}- \dfrac{\psi(z,t)-\psi(z,\tau)}{|t-\tau|^{3/2}}\right)
	$$
	Call these terms $A_1, A_2, A_3$ respectively.
	
	For the first one, notice that $\beta(y,t)$, as a function of $t$ is supported on $|t-\tau|<4\rho^2$ by definition, so 
	$$
	\int_{|t-\tau>9\rho^2/4} |A_1|\, \d t = \int_{|t-\tau|\approx \rho^2} \dfrac{|\psi_{\CC} -\psi(y,t)|}{|t-s|^{3/2}}\, \d t\approx \int_{|t-\tau|\approx \rho^2} \dfrac{|\psi_{\CC} -\psi(y,t)|}{|t-\tau|^{3/2}}\, \d t,
	$$
	and since then $(y,t)\in C_{4\rho}'(z,\tau)$, we use \eqref{eq.BMOestimate} to bound the numerator by  $\delta\rho$, which gives
	$$
	\int_{|t-\tau|>9\rho^2/4} |A_1|\, \d t \lesssim \delta\rho \int_{t\approx \rho^2} t^{-3/2}\, \d t \lesssim \delta.
	$$
	
	For $A_2$ we use that $|\beta|\leq 1$ and $(y,s)\in \CC$ to again bound the numerator by $\rho\delta$ and so
	$$
	\int_{|t-\tau|>9\rho^2/4} |A_2|\, \d t \lesssim \rho\delta \int_{\rho^2}^\infty t^{-3/2}\, \d t \lesssim \delta.
	$$
	
	For $A_3$ we first put a common denominator on the quantity in parentheses and rewrite the result as 
	$$
	\dfrac{[(\psi(y,t)-\psi(z,t))-(\psi(y,s)-\psi(z,\tau))]|t-\tau|^{3/2}+(\psi(z,t)-\psi(z,\tau))(|t-\tau|^{3/2}-|s-t|^{3/2})}{|t-\tau|^{3/2}|t-s|^{3/2}}.
	$$
	
	Now notice that for any given $t$, the points $(y,t), (z,t)$ lie in a (parabolic) cylinder of size $<\rho$, and so each term inside the brackets can be controlled, using \eqref{eq.osscilationestimate}, by $\delta\rho$. This an the $Lip(1,1/2)$ estimate for $\psi$ directly then give 
	\begin{equation*}
		\begin{split}
			|A_3|&\lesssim \dfrac{\delta\rho}{|t-s|^{3/2}} + \delta\dfrac{|t-\tau|^{1/2}(|t-\tau|^{3/2}-|t-s|^{3/2})}{|t-\tau|^{3/2}|t-s|^{3/2}}\\
			& \lesssim \dfrac{\delta\rho}{|t-\tau|^{3/2}} + \delta\dfrac{|t-\tau|^{1/2}(|t-\tau|^{1/2}+|t-s|^{1/2})|s-\tau|}{|t-\tau|^{3/2}|t-s|^{3/2}}\\
			& \lesssim \dfrac{\delta\rho}{|t-\tau|^{3/2}} + \dfrac{\delta\rho^2}{|t-\tau|^2},
		\end{split}
	\end{equation*}
	where we used the elementary inequality $|a^p-b^p|\lesssim (a^{p-1}+b^{p-1})|a-b|$ in the second line. Integrating this gives
	$$
	\int_{|t-\tau|>9\rho^2/4}|A_3|\, \d t \lesssim \delta,
	$$
	which finishes the proof.
\end{proof}

\section{Packing the maximal cubes}\label{packsttime}
In this section we conclude the proof of Theorem \ref{main}, by proving the packing condition for the maximal cubes $\{Q(\sbf)\}_{\sbf \in \mathcal{F}}$.
In order to reduce matters, we need to introduce some additional notation. Recall that $m(\sbf)$ is the collection of minimal cubes in a stopping time regime $\sbf \in \mathcal{F}$. We let $m_0(\sbf)$ denote the cubes in $m(\sbf)$ which are such that $Q$ has a child in $\mathcal{B}'$ and we let $m_1(\sbf)$
denote the cubes in $m(\sbf)$ for which the angle between $P_{ Q(\sbf)}$ and $P_Q$ is greater or equal to $\delta/2$. Hence, by construction
$$m(\sbf)=m_0(\sbf)\cup m_1(\sbf).$$
Following \cite{DS1}, we now let
\begin{align}
\mathcal{F}_0&:=
\{\sbf\in\mathcal{F}:\ \sigma\bigl (\cup_{Q\in m_0(\sbf)}Q\bigr )\geq \sigma(Q(\sbf))/4\},\notag\\
\mathcal{F}_1&:=
\{\sbf\in\mathcal{F}:\ \sigma\bigl (\cup_{Q\in m_1(\sbf)}Q\bigr )\geq \sigma(Q(\sbf))/2\},\notag\\
\mathcal{F}_2&:=
\{\sbf\in\mathcal{F}:\ \sigma\bigl (Q(\sbf)\setminus\cup_{Q\in m(\sbf)}Q\bigr )\geq \sigma(Q(\sbf))/4\}.
\end{align}
Then $\mathcal{F}=\mathcal{F}_0\cup\mathcal{F}_1\cup \mathcal{F}_2$.  It follows (see \cite[Lemma 7.4]{DS1}) that
$$\sum_{\sbf\in\mathcal{F}_0\cup\mathcal{F}_2: Q(\sbf)\subset Q}\sigma\big(Q(\sbf)\big)\,\leq\,  c(n,M,\Gamma,\epsilon,\delta,K)\, \sigma(Q)\,,
\quad \forall Q\in \dd(\Sigma)\,.$$
The ideas behind this estimate are that to obtain the bound for $\mathcal{F}_2$, one uses that the sets $Q(\sbf) \setminus (\cup_{Q \in m(\sbf)} Q)$ are pairwise disjoint, while the bound for $\mathcal{F}_0$ follows from the packing condition which by construction holds for $\mathcal{B}'$. We include the details of these arguments below.
In the case of $\mathcal{F}_2$ we have for each $Q(\sbf)$ a set $F_{\sbf} \subseteq Q(\sbf)$ with each $F_{\sbf}$ disjoint and $\sigma(F_{\sbf}) \ge (1/4) \sigma(Q(\sbf))$. Thus, for $\widetilde{Q}$ any cube
\[\sum_{\substack{Q(\sbf) \subseteq \widetilde{Q} \\ \sbf \in \mathcal{F}_2} } \sigma(Q(\sbf)) \le 4 \sum_{\substack{Q(\sbf) \subseteq \widetilde{Q} \\ \sbf \in \mathcal{F}_2} } \sigma(F_{\sbf}) \le 4 \sigma(\widetilde{Q}),\]
where we used that $F_{\sbf} \subseteq Q(\sbf) \subseteq \widetilde{Q}$ are disjoint, so their measures cant add up to more than $\sigma(\widetilde{Q})$. 

The $\mathcal{F}_0$ stopping times have a different packing argument.  If $\sbf$ is in $\mathcal{F}_0$ we have for each $Q(\sbf)$ and associated collection of subcubes $Q_j$ of $Q(\sbf)$, call them $\mathcal{B}_\sbf$, which have a child $Q_j'$ in $\mathcal{B}'$. In particular, we take  $\mathcal{B}_\sbf$ to be the $m_0(\sbf)$ cubes and they satisfy
\[\sum_{Q \in \mathcal{B}_\sbf}  \sigma(Q) = \sigma\left(\bigcup_{Q \in \mathcal{B}_\sbf} Q\right) \ge (1/4) \sigma(Q(\sbf)).\]  
One notes that if $\sbf,\sbf'$ is in $\mathcal{F}_0$ are distinct then $\mathcal{B}_\sbf \cap \mathcal{B}_{\sbf'}$, which can be seen by the disjointness of the stopping times. Thus, if $\widetilde{Q}$ is a cube
\begin{align*}
\sum_{\substack{Q(\sbf) \subseteq \widetilde{Q} \\ \sbf \in \mathcal{F}_0} } \sigma(Q(\sbf)) &\le 4 \sum_{\substack{Q(\sbf) \subseteq \widetilde{Q} \\ \sbf \in \mathcal{F}_0} } \sum_{Q \in \mathcal{B}_\sbf} \sigma(Q) 
\\ & \le 4C_*\sum_{\substack{Q' \in \mathcal{B}' \\ Q' \subseteq \widetilde{Q}}} \sigma(Q')   \le 4C_*C\sigma(\widetilde{Q}),
\end{align*}
where the $C$ is the packing constant for the cubes $\mathcal{B}'$ and $C_*$ is a constant so that if $Q'$ is a child of $Q$ then $\sigma(Q) \le C_* \sigma(Q')$.  Thus, to pack the maximal cubes (and hence prove Theorem \ref{main}) it suffices to prove that
\begin{align}
\sum_{\sbf\in\mathcal{F}_1: Q(\sbf)\subset Q}\sigma\big(Q(\sbf)\big)\,\leq\,  c(n,M,\Gamma,\epsilon,\delta,K)\, \sigma(Q)\,,
\quad \forall Q\in \dd(\Sigma)\,.
\end{align}

Consider $\sbf\in\mathcal{F}$ and assume that
\begin{align}\label{pack++}\iiint_{E(\sbf)} (\ga  ( Z, \tau, Kr))^2 \, \frac{\d \si ( Z, \tau)\d r} {r}\leq\epsilon^2\sigma(Q(\sbf)),
\end{align}
where $E(\sbf):=E(x_{Q(\sbf)},t_{Q(\sbf)}, R)$ is the region appearing in Lemma \ref{count++}, and  $R:=\diam(Q(\sbf))$, see \eqref{nota2}. I.e. $E(\sbf)$ is the set of all $(Z,\tau,r)\in \big[\Sigma\cap\pi^{-1}(C'_{KR}(x_{Q(\sbf)},t_{Q(\sbf)}))\big]\times(0,\infty)$ such that
$ K^{-1}d(Z,\tau)\leq r\leq KR$.

We are the going to prove that there exists $\epsilon$ small,
independent of $\sbf$, such that if \eqref{pack++} holds then,
$\sbf\notin\mathcal{F}_1$. Hence, if $\sbf\in\mathcal{F}_1$ then the opposite inequality in \eqref{pack++} must hold and therefore, if $Q\in \dd(\Sigma)$ then
\begin{align}\label{pack+}
\sum_{\sbf\in\mathcal{F}_1: Q(\sbf)\subset Q}\sigma\big(Q(\sbf)\big)&\leq \epsilon^{-2}
\sum_{\sbf\in\mathcal{F}_1: Q(\sbf)\subset Q} \iiint_{E(\sbf)} (\ga  ( Z, \tau, Kr))^2 \, \frac{\d \si ( Z, \tau)\d r} {r}\notag\\
&\leq c(n,M,\Gamma,\epsilon,\delta,K)\, \sigma(Q),
\end{align}
by the facts that $\Sigma$ is parabolic UR, that $\sigma$ is a doubling measure, and that the sets $\{E(\mathcal{S})\}_{\mathcal{S} \in \mathcal{F}}$ have bounded overlap in $\Sigma \times (0,\infty)$.  Verification of the
latter fact follows by first noting, for each $(Z,\tau,r) \in E(\mathcal{S})$, that there is some cube $Q \in \mathcal{S}$ such that
$$d_p(Z,\tau, Q) + \diam Q \leq 2Kr.$$
Hence, upon replacing $Q$ with a suitable ancestor in $\mathcal{S}$, we have
$$d_p(Z,\tau, Q) \lesssim r \text{ and } \diam Q \approx r.$$
Clearly there are only a bounded number of cubes satisfying the inequality in this display for each $(Z,\tau,r)$, hence  the sets $\{E(\mathcal{S})\}_{\mathcal{S} \in \mathcal{F}}$ have bounded overlap in $\Sigma \times (0,\infty)$.

Based on this, and for the sake of obtaining a contradiction, we will from now on assume that \eqref{pack++} holds and that $\sbf \in \mathcal{F}_1$. In the following we will at instances use the short notation $C'_{\rho}:=C'_{\rho}(x_{ Q(\sbf)},t_{ Q(\sbf)})$, $\rho>0$. Using  this notation, the assumption in \eqref{pack++} and
 Lemma \ref{count++}, we can conclude the validity of the important estimate
\begin{align}\label{implication}
\int_0^{\kappa R/10}\iint_{ C'_{\kappa R}}(\hat\gamma ( z, t, r ))^2\frac {\d z\d \tau\d r}r&\lesssim \epsilon^2\sigma(Q(\sbf)).
\end{align}

We are now going to study the implications of \eqref{implication}. To do this, let $ \ph $ be an infinitely differentiable real valued
function on $ \mathbb R^n$  with compact support in $ C'_1 (0,0) $ and set
\begin{align*} \ph_\la (z, \tau ) \, &\equiv \, \la^{- d } \ph  (  z/ \la ,
\tau / \la^2 ).
\end{align*}
For $  f :
\mathbb R^n \rar \mathbb R, $ we introduce the usual convolution of $ \ph $ with $ f$
by
\begin{align}  \ph * f (z, \tau)  = \iint_{\mathbb R^n } \ph ( z
- y, \tau - s ) f ( y, s )\,   \d y \d s,
\end{align} whenever this
convolution makes sense. Furthermore, we suppose that
\begin{align}\label{fi2}\iint_{\mathbb R^n } \ph ( y, s ) \d y\d s  = 0
\, \, \mbox{ and }
 \iint_{\mathbb R^n } y_i \ph ( y, s ) \d y\d s = 0\mbox{ for $1 \leq i \leq n-1$}.
 \end{align}
If $L$ is a  linear function of $ y $ only, then
\begin{align} ( \ph_\la  * \psi )^2 ( z,  \tau ) &=
 [ \ph_\la * ( \psi -
 L ) ]^2  ( z,  \tau) \notag\\
 &\lesssim \la^{ - d } \,
\iint_{C'_\la ( z, \tau ) } \, |  \psi - L |^2 \d y\d s.
\end{align}
In particular,
\begin{align} ( \ph_\la  * \psi )^2 (  z,  \tau ) \lesssim \lambda^2(\hat\gamma(  z,  \tau,\lambda ))^2.
\end{align}
From this observation and \eqref{implication} we see that
\begin{align}\label{2.34}
 { \ds \int_0^\rho  \iint_{C'_\rho ( \hat z, \hat \tau) } }  (
\ph_\la
 *  \psi )^2 \, ( z, \tau )  \frac {\d z\d \tau d\la}{\lambda^3}&\lesssim
 {\ds \int_0^\rho \iint_{ C'_\rho ( \hat z, \hat \tau )}}
   (\hat\gamma ( z, \tau, \la ))^2
\frac {\d z\d \tau\d\la}{\lambda}\notag\\
&\lesssim  \epsilon^2 \sigma(Q(\sbf))
  \end{align}
  whenever $ C'_\rho ( \hat z, \hat \tau)\subset  C'_{\kappa R}(x_{ Q(\sbf)},t_{ Q(\sbf)})$ and $\rho\leq \kappa  R/10$.

We will now use a parabolic Calder\'on reproducing formula. We here follow the construction in  \cite[pp 227-228]{H} and we let $P_\lambda f:= k_\lambda * f$, where
\[k_\lambda(x,t):= \lambda^{-d} k(\lambda^{-1}x,\lambda^{-2}t)\,,\]
and where $k\in C_0^\infty(C'_1(0,0))$ is a non-negative
even function, 
with $$\iint_{\rn} k \d y\d s= 1.$$  Thus, $P_\lambda$ is a nice standard parabolic approximation of the identity.
Consequently,
\begin{align}\label{rep-}
I = -\int_0^\infty  \frac{\partial}{\partial \lambda} P_\lambda^2\, \d\lambda
=\int_0^\infty P_\lambda Q_\lambda\, \frac{\d\lambda}{\lambda},
\end{align}
in the strong operator topology on $\mathcal{B}(L^2(\rn))$,
where $Q_\lambda:= -2 \lambda  \frac{\partial P_\lambda}{\partial \lambda}$.  In particular, if we let $\phi_\lambda := -2 \lambda \frac{\partial k_\lambda}{\partial \lambda} $, then, as $k$ is even, we see that  \eqref{fi2} holds for $\phi_\lambda$, and by \eqref{rep-}
\begin{align}\label{rep}
\psi(z,\tau)=\int_0^\infty\big(k_\lambda*\phi_\lambda *\psi\big)(z,\tau)\frac {\d\lambda}\lambda\,,
\end{align}
in the sense of $L^2$ and with pointwise almost everywhere.

 Using the representation in \eqref{rep} we decompose $\psi$ as
  $$\psi=\psi_{1}+\psi_2,\ \psi_{1}=\psi_{11}+\psi_{12},$$
  where
  \begin{align*}
  \psi_{11}(z,\tau)&:= \,\int_R^\infty(k_\lambda*\phi_\lambda *\psi)(z,\tau)\frac {\d\lambda}\lambda= -\int_R^\infty  \frac{\partial}{\partial \lambda} P_\lambda^2\,\psi(z,\tau)\, \d\lambda,\notag\\
  \psi_{12}(z,\tau)&:= \int_0^R\iint_{\mathbb R^n\setminus C'_{200R}}
  k_\lambda(z-\hat z,\tau-\hat\tau)(\phi_\lambda *\psi)(\hat z,\hat \tau)\frac {\d\hat z\d\hat \tau\d\lambda}\lambda,\notag\\
  \psi_2(z,\tau)&:= \int_0^R\iint_{C'_{200R}}
  k_\lambda(z-\hat z,\tau-\hat\tau)(\phi_\lambda *\psi)(\hat z,\hat \tau)\frac {\d\hat z\d\hat \tau\d\lambda}\lambda,
  \end{align*}
  for $(z,\tau)\in\mathbb R^n$  and we again recall that $R=\diam(Q(\sbf))$.
 \begin{lemma}\label{decomp1} The function $\psi_1:\mathbb R^{n}\to \mathbb R$ satisfies
 \begin{eqnarray}\label{1.1+}
|\psi_1(x,t)-\psi_1(y,s)|\lesssim \delta  d_p(x,t,y,s),
\end{eqnarray}
whenever $(x,t), (y,s)\in C'_{100R}(x_{ Q(\sbf)},t_{ Q(\sbf)})$. Furthermore,
\begin{eqnarray}\label{1.1++}
|\nabla_x^2\psi_1(x,t)|+|\partial_t\psi_1(x,t)|\lesssim\delta R^{-1}\mbox{ on }C'_{100R}(x_{ Q(\sbf)},t_{ Q(\sbf)}).
\end{eqnarray}
  \end{lemma}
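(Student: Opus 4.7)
The plan is to reduce the study of $\psi_1$ on $C'_{100R}:=C'_{100R}(x_{Q(\sbf)},t_{Q(\sbf)})$ to the single convolution $P_R^2\psi$, and then to extract both \eqref{1.1+} and \eqref{1.1++} from the Lip(1,1/2) regularity of $\psi$ given by Lemma \ref{graphlip.lem}, together with the cancellations enjoyed by $\nabla_x k_R$ and $\partial_t k_R$.

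The first reduction is that $\psi_{12}\equiv 0$ on $C'_{100R}$, by a pure support argument: the kernel $k_\lambda(z-\hat z,\tau-\hat\tau)$ appearing in $\psi_{12}$ vanishes unless $d_p(z-\hat z,\tau-\hat\tau)<\lambda\leq R$, which for $(z,\tau)\in C'_{100R}$ forces $(\hat z,\hat\tau)\in C'_{101R}\subset C'_{200R}$, contradicting the domain $\mathbb R^n\setminus C'_{200R}$ of integration. The second reduction uses the identity $k_\lambda*\phi_\lambda*\psi=-\partial_\lambda P_\lambda^2\psi$ (which is just the definition of $\phi_\lambda$) and the fundamental theorem of calculus to write
\[ \psi_{11}(z,\tau)=P_R^2\psi(z,\tau)-\lim_{\lambda\to\infty}P_\lambda^2\psi(z,\tau),\]
where the limit vanishes because $|P_\lambda^2\psi|\leq \|k_\lambda*k_\lambda\|_\infty\|\psi\|_{L^1}\lesssim \lambda^{-d}\|\psi\|_{L^1}$, using that $\psi$ is bounded and compactly supported in $C'_{4\kappa R}$ by \eqref{supportpsi}. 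Hence $\psi_1=P_R^2\psi$ on $C'_{100R}$.

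With this identification the required bounds become routine. Since $k_R\geq 0$ with $\iint k_R=1$, convolution preserves the Lip(1,1/2) norm, so $P_R^2\psi$ inherits the $O(\delta)$ Lip(1,1/2) bound from Lemma \ref{graphlip.lem}, yielding \eqref{1.1+}. For the second spatial derivative I would write $\partial_i\partial_j P_R^2\psi=\partial_i k_R*\partial_j P_R\psi$, bound $\|\partial_j P_R\psi\|_\infty\lesssim \delta$ by combining $\iint \partial_j k_R=0$ with the Lipschitz property of $\psi$ (which gives a factor $\delta R$ on the support, to be multiplied with $\|\partial_j k_R\|_{L^1}\lesssim R^{-1}$), and then $\|\partial_i\partial_j P_R^2\psi\|_\infty\lesssim \|\partial_i k_R\|_{L^1}\cdot\delta\lesssim \delta R^{-1}$. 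For the time derivative, using $\iint \partial_t k_R=0$ and the fact that $P_R\psi$ is itself Lip(1,1/2) with constant $\lesssim\delta$,
\[ |\partial_t P_R^2\psi(z,\tau)| = \Big|\iint \partial_t k_R(y,s)\big[P_R\psi(z-y,\tau-s)-P_R\psi(z,\tau)\big]\,\d y\,\d s\Big|\lesssim \|\partial_t k_R\|_{L^1}\cdot \delta R\lesssim \delta R^{-1},\]
since $\|\partial_t k_R\|_{L^1}\lesssim R^{-2}$ and the integrand is supported where $d_p(y,s)\lesssim R$. The whole argument is essentially bookkeeping; there is no real obstacle, as the ``large-scale'' piece $\psi_1$ of the Calder\'on reproducing formula is, after the two reductions above, nothing more than a standard parabolic smoothing of $\psi$ at scale $R$, and its derivatives at scale $R$ are controlled by the Lipschitz constant of $\psi$.
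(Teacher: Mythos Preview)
Your proof is correct and follows essentially the same approach as the paper: both reduce $\psi_1$ to $P_R^2\psi$ on $C'_{100R}$ via the support argument for $\psi_{12}$ and the identity $\psi_{11}=P_R^2\psi$, then deduce \eqref{1.1+} and \eqref{1.1++} from the Lip(1,1/2) bound on $\psi$ (Lemma~\ref{graphlip.lem}) combined with the cancellation and scaling of the kernel derivatives. The only cosmetic differences are that the paper places both derivatives directly on $\widetilde{k}_R=k_R*k_R$ rather than splitting them across the two factors of $P_R$, and that the paper invokes the identification $\psi_{11}=P_R^2\psi$ ``by construction'' (from the display defining $\psi_{11}$) rather than spelling out the vanishing of $\lim_{\lambda\to\infty}P_\lambda^2\psi$.
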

 \begin{proof} Note that if  $(x,t)\in C'_{100R}(x_{ Q(\sbf)},t_{ Q(\sbf)})$,
   then  $\psi_{12}(x,t)=0$ by construction and hence
   it suffices to treat $\psi_{11}$. Recall that by Lemma \ref{graphlip.lem},
  \begin{eqnarray}\label{1.1+a}
|\psi(x,t)-\psi(y,s)|\lesssim \delta d_p(x,t,y,s)
\end{eqnarray}
for all $(x,t),\ (y,s)\in \mathbb R^n$.   Observe that by construction,
\[\psi_{11} = \widetilde{P}_R  \psi = \widetilde{k}_R*\psi\,,
\]
where $\widetilde{k}_R:= k_R*k_R$, and $ \widetilde{P}_R:=P^2_R$.
Since $\widetilde{k}_R$ is non-negative with integral 1, we see that $|\psi_{11}(x,t)-\psi_{11}(y,s)|$ can be bounded by
\begin{align*}
&\lesssim
 \iint_{\rn}\widetilde{k}_R(z,\tau) \,|\psi(x-z,t-\tau)-\psi(y-z,s-\tau)|\, \d z\d\tau\\ 
&\lesssim\, \delta d_p(x,t,y,s)\,,
\end{align*}
by \eqref{1.1+a} and the translation invariance of the parabolic distance $d_p$. This proves \eqref{1.1+}.

To prove \eqref{1.1++}, we first observe that
\[ \left| \nabla_x^2 \,\widetilde{k}_R(x,t) \right|  \,+\, \left| \partial_t \, \widetilde{k}_R(x,t) \right| \,
\lesssim R^{-d-2}
\mathbbm{1}_{C'_{2R}(0,0)}(x,t)\,.
\]
Moreover,
$$\iint_{\rn} \nabla_x^2  \, \widetilde{k}_R(x,t)\, \d x\d t = 0
= \iint_{\rn} \partial_t  \, \widetilde{k}_R(x,t)\, \d x\d t.$$
Therefore $|\nabla_x^2\psi_{11}(x,t)| +|\partial_t\psi_{11}(x,t)|$ can be bounded by
  \begin{align*}
&\lesssim \, R^{-d-2}\iint_{\rn} \mathbbm{1}_{C'_{2R}(0,0)}(x-y,t-s)\, |\psi(y,s)-\psi(x,t)|\, \d x\d t\\
&\lesssim \, \delta  R^{-d-2}\iint_{\rn} \mathbbm{1}_{C'_{2R}(0,0)}(x-y,t-s)\, d_p(x,t,y,s)\,
 \d x \d t \lesssim\, \delta R^{-1},
  \end{align*}
 where in the next-to-last step we have again used \eqref{1.1+a}.\end{proof}

    \begin{lemma}\label{decomp2} The function $\psi_2:\mathbb R^{n}\to \mathbb R$ satisfies
\begin{eqnarray}\label{1.1+2}
|\psi_2(x,t)-\psi_2(y,s)|\lesssim \delta d_p(x,t,y,s),
\end{eqnarray}
whenever $(x,t),\ (y,s)\in C'_{100R}(x_{ Q(\sbf)},t_{ Q(\sbf)})$. Furthermore,
\begin{align}\label{decomp+}
  \iint_{\mathbb R^n}|\nabla_x\psi_2|^2+|\dhalf\psi_2|^2\, \d z\d\tau\lesssim \epsilon^2 \sigma(Q(\sbf)).
  \end{align}
  \end{lemma}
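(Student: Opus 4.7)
The plan is to handle the two claims in order. For the Lip(1,1/2) bound on $C'_{100R}$, observe first that the integrand defining $\psi_{12}$ vanishes for $(z,\tau) \in C'_{100R}(x_{Q(\sbf)},t_{Q(\sbf)})$: indeed $k_\lambda(z-\hat z, \tau-\hat\tau) = 0$ unless $d_p(z,\hat z,\tau,\hat\tau) \lesssim \lambda \leq R$, so if $(z,\tau) \in C'_{100R}$ and $(\hat z,\hat\tau) \notin C'_{200R}$ the kernel is zero. Thus $\psi_2 = \psi - \psi_1$ on $C'_{100R}$, and the claim \eqref{1.1+2} follows by the triangle inequality from Lemma \ref{graphlip.lem} (which gives the Lip(1,1/2) bound for $\psi$) and Lemma \ref{decomp1} (which gives it for $\psi_1$).

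For \eqref{decomp+} I would use a duality/Cauchy--Schwarz argument against the square-function estimate \eqref{2.34}. Write
\[
\psi_2 = \int_0^R k_\lambda * f_\lambda \, \frac{\d\lambda}{\lambda}, \qquad f_\lambda := 1_{C'_{200R}}(\phi_\lambda * \psi),
\]
and let $D$ denote either $\partial_{x_j}$ (for some $j \in \{1,\dots,n-1\}$) or $D_t^{1/2}$. The parabolic scaling gives $Dk_\lambda = \lambda^{-1} (Dk)_\lambda$, so
\[
D\psi_2 = \int_0^R (Dk)_\lambda * f_\lambda \, \frac{\d\lambda}{\lambda^2}.
\]
For $g \in L^2(\mathbb R^n)$ with $\|g\|_{L^2} \leq 1$, self-adjointness of convolution (modulo sign) and Cauchy--Schwarz in the measure $\d z\, \d\tau\, \d\lambda/\lambda$ yield
\[
|\langle D\psi_2, g\rangle| \lesssim \Bigl(\int_0^R\!\!\iint \frac{|f_\lambda|^2}{\lambda^2}\frac{\d z\d\tau\d\lambda}{\lambda}\Bigr)^{1/2}\Bigl(\int_0^\infty\!\!\iint |\widetilde{(Dk)}_\lambda * g|^2 \frac{\d z\d\tau\d\lambda}{\lambda}\Bigr)^{1/2}.
\]
The first factor equals $\int_0^R\!\iint_{C'_{200R}} (\phi_\lambda * \psi)^2 \, \d z\,\d\tau\,\d\lambda/\lambda^3$, which is controlled by $\epsilon^2 \sigma(Q(\sbf))$ by applying \eqref{2.34} at the center $(x_{Q(\sbf)}, t_{Q(\sbf)})$ with $\rho = \kappa R/10$ (recalling $\kappa \gg 1$, so $C'_{200R} \subset C'_{\kappa R/10}$ and $R \leq \kappa R/10$).

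The second factor is a classical Littlewood--Paley square function for the mean-zero kernel $Dk$. Since $\hat k$ is Schwartz, both $\widehat{\partial_{x_j} k}(\xi,\tau) = i\xi_j \hat k(\xi,\tau)$ and $\widehat{D_t^{1/2} k}(\xi,\tau) = |\tau|^{1/2}\hat k(\xi,\tau)$ vanish at the origin and decay rapidly at infinity; by Plancherel, parabolic homogeneity of the integral $\int_0^\infty |\hat\eta(\lambda\xi,\lambda^2\tau)|^2 \, \d\lambda/\lambda$, and integrability near $0$ and $\infty$ of $\lambda^{\pm 2}$, this factor is $\lesssim \|g\|_{L^2}^2$. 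Taking the supremum over $g$ completes the proof. The only real subtlety is the square-function bound for the kernel $D_t^{1/2} k$, which is not compactly supported; however it is smooth away from $\{t=0\}$ with $|t|^{-3/2}$ decay in $t$, it has mean zero, and its Fourier transform satisfies $|\widehat{D_t^{1/2}k}(u,v)| \lesssim \min(|v|^{1/2}, (1+|u|+|v|)^{-N})$ for any $N$, which is more than enough for the Plancherel/homogeneity argument to go through.
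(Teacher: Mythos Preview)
Your proof is correct and follows essentially the same approach as the paper: for \eqref{1.1+2} the paper likewise deduces the bound directly from the Lip(1,1/2) estimates for $\psi$ and $\psi_1$, and for \eqref{decomp+} the paper uses precisely the duality/Cauchy--Schwarz argument you describe, bounding one factor by \eqref{2.34} (with $\kappa\geq 2000$) and the other by a Littlewood--Paley/Plancherel square-function estimate for the kernels $(\partial_{x_j}k)_\lambda$ and $(D_t^{1/2}k)_\lambda$. One minor remark: your observation that $\psi_{12}$ vanishes on $C'_{100R}$, while correct, is not actually needed for \eqref{1.1+2}, since $\psi_2=\psi-\psi_1$ holds everywhere by definition.
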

\begin{proof} \eqref{1.1+2} follows immediately from \eqref{1.1+} and \eqref{1.1+a}. To prove
\eqref{decomp+}, let  $f\in L^2(\mathbb R^n)$ and note that
  \begin{align*}
  &\iint_{\mathbb R^n}\partial_{x_i}\psi_2(z,\tau)\, f(z,\tau) \d z\d\tau = -
  \int_0^R\iint_{C'_{200R}}((\partial_{x_i}k)_\lambda*f) (\phi_\lambda *\psi)\frac {\d\hat z\d\hat \tau\d\lambda}{\lambda^2}.
  \end{align*}
  Hence,
      \begin{align*}
      \iint_{\mathbb R^n}|\nabla_x\psi_2|^2\, \d z\d\tau\lesssim \int_0^R\iint_{C'_{200R}}|\phi_\lambda *\psi|^2\frac {\d\hat z\d\hat \tau\d\lambda}{\lambda^3}
      \lesssim \epsilon^2 \sigma(Q(\sbf)),
  \end{align*}
  by Littlewood-Paley theory  and  \eqref{2.34} (provided we ensure $\kappa\geq 2000$ so that $\kappa R/10 \ge 200R$). Similarly,
  \begin{align*}
  &\iint_{\mathbb R^n}\dhalf\psi_2(z,\tau)\, f(z,\tau) \d z\d\tau
  =\int_0^R\iint_{C'_{200R}}((\dhalf k)_\lambda*f) (\phi_\lambda *\psi)\frac {\d\hat z\d\hat \tau\d\lambda}{\lambda^2},
  \end{align*}
  and again
  \begin{align*}
      \iint_{\mathbb R^n}|\dhalf\psi_2|^2\, \d z\d\tau\lesssim \int_0^R\iint_{C'_{200R}}|\phi_\lambda *\psi|^2\frac {\d\hat z\d\hat \tau\d\lambda}{\lambda^3}
      \lesssim \epsilon^2 \sigma(Q(\sbf))\,,
  \end{align*}
where we have used that
    \begin{align*}
\int_0^\infty\iint_{\mathbb R^n}|(\dhalf k)_\lambda*f(\hat z,\hat\tau)|^2 \frac {\d\hat z\d\hat \tau\d\lambda}{\lambda}\lesssim \iint_{\mathbb R^n}|f(\hat z,\hat\tau)|^2 {\d\hat z\d\hat \tau}.
  \end{align*}
In turn, the latter estimate follows readily via Plancherel's theorem in $\rn$, using that
 the half time derivative corresponds to the Fourier multiplier $|\tau|^{\frac{1}{2}}$ as follows.

 First, the equivalence between our notion of half-time derivative and the Fourier multiplier one can be found, for instance, in \cite{DPV} (see in particular the beginning of Section 3 and Proposition 3.3 there).
 
Recall that $k\in C_0^\infty(C'_1(0,0))$. Taking the Fourier transform in $(\hat{z},\hat{\tau})$ and using Plancherel's formula we have, using $(\xi,\zeta)\in \RR^{n-1}\times \RR$ for the dual variables, 
\begin{equation*}
	\begin{split}
		\iint_{\RR^n} |(D_t^{1/2}k)_\lambda * f(\hat{z}, \hat{\tau})|^2 \, \d\hat{z} \d\hat{\tau} & = c_n \iint_{\rn} |\lambda^2\zeta||\hat{k}(\lambda \xi, \lambda^2 \zeta)\hat{f}(\xi,\zeta)|^2\, \d \xi \d\zeta \\
		& \lesssim \iint_{\rn}\min\{ \lambda^2|\zeta|,(\lambda^{2}|\zeta|)^{-2}\} |\hat{f}(\xi,\zeta)|^2\, \d \xi\d\zeta,
	\end{split}
\end{equation*}
where we used the fact that $\hat{k}$ is a Schwartz function in the last inequality; more specifically we used the bounds
$$
|\hat{k}(\xi,\zeta)|\lesssim \min\{ 1, |\zeta|^{-3/2}\}, \qquad (\xi,\zeta)\in \rn.
$$
Using the previous estimate and integrating in $\lambda$ we arrive at 
\begin{equation*}
	\begin{split}
		\int_0^\infty\iint_{\mathbb R^n}|(\dhalf k)_\lambda&*f(\hat z,\hat\tau)|^2 \frac {\d\hat z\d\hat \tau\d\lambda}{\lambda}  \\
		&\lesssim \iint_{\rn}|\hat{f}(\xi,\zeta)|^2 \left(\int_0^\infty \min\{ \lambda^2|\zeta|,(\lambda^{2}|\zeta|)^{-2}\}\,\dfrac{\d\lambda}{\lambda}\right)\, \d\xi\d\zeta\\
		& = \iint_{\rn} |\hat{f}(\xi,\zeta)|^2\, \d\xi\d\zeta.
	\end{split}
\end{equation*}

\end{proof}

  We next prove two lemmas concerning the oscillations of $\psi_2$ and $\psi$.

  \begin{lemma}\label{l1} Consider $C'_r(z,\tau)\subset\mathbb R^n$, let
  $$m_{C'_r(z,\tau)}\psi_2:=\bariint_{C'_r(z,\tau)}\psi_2\d\hat z\d\hat\tau,$$
  and
   $$\osc_{C'_r(z,\tau)}\psi_2:=\sup_{(\hat z,\hat\tau)\in C'_r(z,\tau)}\biggl |\psi_2(\hat z,\hat \tau)-m_{C'_r(z,\tau)}\psi_2 \biggr |.$$
  Assume that $C'_r(z,\tau)\subset C'_{100R}(x_{ Q(\sbf)},t_{ Q(\sbf)})$. Then
  $$\osc_{C'_r(z,\tau)}\psi_2\lesssim r\biggl \{r^{-1}\bariint_{C'_r(z,\tau)}\biggl |\psi_2(\hat z,\hat \tau)-m_{C'_r(z,\tau)}\psi_2\biggr |\d\hat z\d\hat\tau\biggr \}^{1/(d+1)}\delta^{d/(d+1)}.$$
  \end{lemma}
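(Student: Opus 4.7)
The plan is to combine two ingredients. First, that $\psi_2 = \psi - \psi_1$ inherits the Lip$(1,1/2)$ bound of order $\delta$ from Lemma \ref{graphlip.lem} (for $\psi$) and Lemma \ref{decomp1} (for $\psi_1$), so that on $C'_{100R}(x_{Q(\sbf)}, t_{Q(\sbf)}) \supset C'_r(z,\tau)$ we have
\[|\psi_2(\hat z_1, \hat\tau_1) - \psi_2(\hat z_2, \hat\tau_2)| \le C_0 \delta \, d_p(\hat z_1,\hat\tau_1,\hat z_2,\hat\tau_2)\]
for an absolute constant $C_0$. Second, a standard ``oscillation versus average'' interpolation of Lipschitz-type: a function which is Lipschitz of order $\delta$ cannot exhibit a large oscillation on a cube unless its mean deviation is correspondingly large.

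For the main step, I would set $M := \osc_{C'_r(z,\tau)} \psi_2$ and $m := m_{C'_r(z,\tau)} \psi_2$. Since $\osc \le 2 \sup |\psi_2 - m|$, pigeonhole (or just the definition of the mean) gives a point $(\hat z_0, \hat\tau_0) \in C'_r(z,\tau)$ with $|\psi_2(\hat z_0, \hat\tau_0) - m| \ge M/4$. Setting $\rho := M/(8 C_0 \delta)$, the Lip$(1,1/2)$ bound yields
\[|\psi_2(\hat z, \hat\tau) - m| \ge M/4 - C_0 \delta \rho = M/8\]
for every $(\hat z, \hat\tau) \in C'_\rho(\hat z_0, \hat\tau_0)$. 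Note that because $\psi_2$ is Lip$(1,1/2)$ of order $\delta$ and $\diam_p(C'_r(z,\tau)) \lesssim r$, one automatically has $M \lesssim \delta r$, so $\rho \lesssim r$ — no separate case analysis is needed. Since $(\hat z_0, \hat\tau_0) \in C'_r(z,\tau)$ and $\rho \lesssim r$, an elementary geometric argument (a parabolic cube of sidelength $\rho$ centered at any point of a larger parabolic cube of sidelength $r$) gives
\[|C'_\rho(\hat z_0, \hat\tau_0) \cap C'_r(z,\tau)| \gtrsim \rho^d.\]

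Combining these, I would estimate
\[\iint_{C'_r(z,\tau)} |\psi_2 - m| \, \d \hat z \d \hat \tau \ge \iint_{C'_\rho(\hat z_0, \hat\tau_0) \cap C'_r(z,\tau)} |\psi_2 - m| \, \d \hat z \d \hat \tau \gtrsim M \rho^d \gtrsim \frac{M^{d+1}}{\delta^d},\]
so that, dividing by $|C'_r(z,\tau)| \approx r^d$,
\[\bariint_{C'_r(z,\tau)} |\psi_2 - m| \, \d \hat z \d \hat \tau \gtrsim \frac{M^{d+1}}{(\delta r)^d}.\]
Solving for $M$ gives $M \lesssim \delta^{d/(d+1)} r^{d/(d+1)} \bigl(\bariint |\psi_2 - m|\bigr)^{1/(d+1)}$, which is exactly the claimed estimate after factoring out the $r$ in the way specified. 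The main (mild) subtlety is simply the bookkeeping of constants so that $\rho \lesssim r$ is guaranteed by the Lip$(1,1/2)$ bound on $\psi_2$, but nothing genuinely deep is required beyond the Lipschitz control of $\psi_2$ already established.
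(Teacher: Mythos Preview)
Your argument is correct and follows the same strategy as the paper: use the Lip$(1,1/2)$ bound on $\psi_2$ (from Lemma~\ref{graphlip.lem} and Lemma~\ref{decomp1}) to show that $|\psi_2 - m|$ stays $\gtrsim M$ on a parabolic sub-cube of scale $\rho \approx M/\delta$, then integrate. The paper splits into two cases according to whether $\mu/(2\beta\delta) \le r$ or not, whereas you note a priori that $M \lesssim \delta r$ (immediate from the Lip$(1,1/2)$ bound and the definition of the mean), so $\rho \lesssim r$ always and no case distinction is needed --- a small but clean simplification of the paper's argument.
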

  \begin{proof} Let $(\hat z,\hat\tau)\in \overline{C'_r(z,\tau)}$ be such that
 $$\biggl |\psi_2(\hat z,\hat \tau)-m_{C'_r(z,\tau)}\psi_2 \biggr |=\osc_{C'_r(z,\tau)}\psi_2=:\mu$$
  Using Lemma \ref{decomp1} and Lemma \ref{decomp2} we note that
  \begin{align}\label{lip}
 |\psi_2(\hat z,\hat \tau)- \psi_2(\tilde z,\tilde \tau)|\leq
  \beta\delta  d_p(\hat z,\hat \tau,\tilde z,\tilde \tau),
  \end{align}
   whenever $(\hat z,\hat\tau),\ (\tilde z,\tilde \tau)\in \overline{C'_r(z,\tau)}$,
   and for every function  $\tilde\psi$ in the set $\{\psi,\psi_1,\psi_2\}$. Here $\beta$ is the implicit constant in \eqref{1.1+2}.
   If $(\tilde z,\tilde \tau)\in C'_r(z,\tau)$ is such that
   $d_p(\hat z,\hat \tau,\tilde z,\tilde \tau)\leq \mu/(2\beta\delta )$, then
    \begin{align}\label{lip+}|\psi_2(\hat z,\hat \tau)- \psi_2(\tilde z,\tilde \tau)|\leq\mu/2,
      \end{align}
      by \eqref{lip} and hence
   \begin{align}\label{lip++}\biggl |\psi_2(\tilde  z,\tilde \tau)-m_{C'_r(z,\tau)}\psi_2 \biggr |\geq \mu/2,
   \end{align}
   by the definition of $\mu$ and $(\hat z,\hat\tau)$.

   Assume that
   $\mu/(2\beta\delta)\leq r$. Then, using \eqref{lip++} we see that
  \begin{equation*}
 \iint_{C'_r(z,\tau)}\biggl |\psi_2(\tilde  z,\tilde \tau)-m_{C'_r(z,\tau)}\psi_2
 \biggr |\d\tilde z\d\tilde \tau\geq c \mu (\mu/(\beta\delta))^{d}
   = c \mu^{d+1}\beta ^{-d}\delta^{-d},
   \end{equation*}
   for a constant $c=c(n)$. Hence, in this case, with implicit constants depending only on $n$, we have
\begin{align}\label{lip+++}\mu^{d+1}&\lesssim
 \beta^{d}\delta^{d}\iint_{C'_r(z,\tau)}\biggl |\psi_2(\tilde  z,\tilde \tau)-m_{C'_r(z,\tau)}\psi_2 \biggr |
   \d\tilde z\d\tilde\tau\notag\\
   &\approx \beta^{d}\delta^{d}r^{d+1}r^{-1}
   \bariint_{C'_r(z,\tau)}\biggl |\psi_2(\tilde  z,\tilde \tau)-m_{C'_r(z,\tau)}\psi_2 \biggr |
   \d\tilde z\d\tilde\tau.\end{align}
   This completes the proof of the lemma in this case.

   Assume that $\mu/(2\beta\delta)\geq r$. In this case, using \eqref{lip} and \eqref{lip+} we see that
     $$\biggl |\psi_2(\tilde  z,\tilde \tau)-m_{C'_r(z,\tau)}\psi_2 \biggr |\geq \mu/2$$
     on an ample portion of $C'_r(z,\tau)$ and hence, for $c=c(n)$, we have
     \begin{align}\label{lip++++}\bariint_{C'_r(z,\tau)}\biggl |\psi_2(\tilde  z,\tilde \tau)-m_{C'_r(z,\tau)}\psi_2\biggr |\d\tilde z\d\tilde\tau\geq c^{-1}\mu.
     \end{align}
     Moreover, using \eqref{lip}, 
     we also have
      \begin{align}\label{lip+++++}r^{-1}
      \bariint_{C'_r(z,\tau)}\biggl |\psi_2(\tilde  z,\tilde \tau)-m_{C'_r(z,\tau)}\psi_2
      \biggr |\d\tilde z\d\tilde\tau\lesssim \beta\delta.
      \end{align}
      In particular, combining \eqref{lip++++} and \eqref{lip+++++} we deduce
      \begin{align}
     \mu&\lesssim\, r  \biggl \{
     r^{-1}\bariint_{C'_r(z,\tau)}
     \biggl |\psi_2(\tilde  z,\tilde \tau)-m_{C'_r(z,\tau)}\psi_2\biggr |
     \d\tilde z\d\tilde\tau \biggr \} 
     \notag\\
       &\lesssim\, r\biggl \{r^{-1}\bariint_{C'_r(z,\tau)}\biggl |\psi_2(\hat z,\hat \tau)-m_{C'_r(z,\tau)}\psi_2\biggr |
       \d\tilde z\d\tilde\tau\biggr \}^{1/(d+1)}(\beta\delta)^{d/(d+1)}\,,
       \end{align}
       where the various implicit constants are purely dimensional, i.e. depend only on $n$. The conclusion of the lemma follows also in this case.
  \end{proof}

    \begin{lemma}\label{l2} Let
    $$N(\psi_2)(\tilde z,\tilde \tau):= \sup_{C'_\rho(z,\tau)\ni (\tilde z,\tilde\tau)}
    \biggl \{\rho^{-1}\bariint_{C'_\rho(z,\tau)}\biggl |\psi_2(\hat z,\hat \tau)-m_{C'_\rho(z,\tau)}\psi_2\biggr |\d\hat z\d\hat\tau\biggr \}.$$
    Let $\theta\in (0,1)$  be a degree of freedom and let $$\ti F:=\{(\tilde z,\tilde \tau)\in C'_{50R}(x_{ Q(\sbf)},t_{ Q(\sbf)}):\ N(\psi_2)(\tilde z,\tilde \tau)\leq\theta^{d+1}\delta\}.$$
    Let $\eta\in (0,1)$, consider $(z_0,\tau_0)\in \mathbb R^n$ and assume that
    $\ti F\cap C'_r(z_0,\tau_0)\neq\emptyset$ for some $r\leq \eta R$. Then
       \begin{equation}\label{semi-taylor}
   \sup_{(z,\tau)\in C'_r(z_0,\tau_0)}|\psi(z,\tau)-\psi(z_0,\tau_0)-\nabla_z\psi_1(z_0,\tau_0)\cdot (z-z_0)|\lesssim (\theta+\eta)\delta r.
   \end{equation}
  \end{lemma}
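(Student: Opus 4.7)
\medskip

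My plan is to prove Lemma \ref{l2} by exploiting the decomposition $\psi=\psi_1+\psi_2$ and handling each piece with a different tool: a Taylor expansion for the smooth part $\psi_1$, and the oscillation inequality Lemma \ref{l1} combined with the hypothesis $N(\psi_2)(\tilde z,\tilde\tau)\leq \theta^{d+1}\delta$ for the rough part $\psi_2$. The key observation that makes this clean is that any point in $\tilde F\cap C'_r(z_0,\tau_0)$ serves as an ``anchor'' from which we can cover the target cube by a doubled cube where $N(\psi_2)$ gives us averaged oscillation control.

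For $\psi_1$: by Lemma \ref{decomp1}, on $C'_{100R}(x_{Q(\sbf)},t_{Q(\sbf)})$ we have $|\nabla_x^2\psi_1|+|\partial_t\psi_1|\lesssim \delta R^{-1}$. Since $r\leq \eta R$ and, as verified below, $C'_r(z_0,\tau_0)\subset C'_{100R}(x_{Q(\sbf)},t_{Q(\sbf)})$, a standard second-order Taylor expansion in space and first-order in time yields
\begin{equation*}
\bigl|\psi_1(z,\tau)-\psi_1(z_0,\tau_0)-\nabla_z\psi_1(z_0,\tau_0)\cdot(z-z_0)\bigr|
\,\lesssim\, \frac{\delta}{R}\bigl(|z-z_0|^2+|\tau-\tau_0|\bigr)
\,\lesssim\, \frac{\delta r^2}{R}\,\leq\, \eta\delta r\,,
\end{equation*}
for all $(z,\tau)\in C'_r(z_0,\tau_0)$.

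For $\psi_2$: choose $(\tilde z,\tilde\tau)\in \tilde F\cap C'_r(z_0,\tau_0)$, which exists by hypothesis. Then $C'_r(z_0,\tau_0)\subseteq C'_{2r}(\tilde z,\tilde\tau)$, and since $(\tilde z,\tilde\tau)\in C'_{50R}$ and $r\leq \eta R\leq R$, the enclosing cube $C'_{2r}(\tilde z,\tilde\tau)$ sits inside $C'_{100R}(x_{Q(\sbf)},t_{Q(\sbf)})$, so Lemma \ref{l1} is applicable. By definition of $N(\psi_2)$ at $(\tilde z,\tilde\tau)$, with the cube $C'_{2r}(\tilde z,\tilde\tau)$,
\begin{equation*}
(2r)^{-1}\bariint_{C'_{2r}(\tilde z,\tilde\tau)}\bigl|\psi_2-m_{C'_{2r}(\tilde z,\tilde\tau)}\psi_2\bigr|\,d\hat z\, d\hat\tau\,\leq\, N(\psi_2)(\tilde z,\tilde\tau)\,\leq\, \theta^{d+1}\delta.
\end{equation*}
Applying Lemma \ref{l1} on $C'_{2r}(\tilde z,\tilde\tau)$ therefore gives
\begin{equation*}
\osc_{C'_{2r}(\tilde z,\tilde\tau)}\psi_2\,\lesssim\, 2r\bigl(\theta^{d+1}\delta\bigr)^{1/(d+1)}\delta^{d/(d+1)}\,=\,2r\,\theta\,\delta\,,
\end{equation*}
since the exponents on $\delta$ telescope to $1$. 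Because both $(z,\tau)$ and $(z_0,\tau_0)$ lie in $C'_{2r}(\tilde z,\tilde\tau)$, the triangle inequality yields $|\psi_2(z,\tau)-\psi_2(z_0,\tau_0)|\leq 2\,\osc_{C'_{2r}(\tilde z,\tilde\tau)}\psi_2\lesssim \theta\delta r$.

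Summing the $\psi_1$ and $\psi_2$ contributions (note that no $\nabla_z\psi_2$ term enters, because the Taylor polynomial in the conclusion only uses $\nabla_z\psi_1$), we obtain the bound $\lesssim (\eta+\theta)\delta r$, which is \eqref{semi-taylor}. Since both steps are essentially bookkeeping once one spots the correct enclosing cube $C'_{2r}(\tilde z,\tilde\tau)$ and invokes Lemma \ref{l1} at scale $2r$ rather than $r$, I do not anticipate any serious obstacle; the only mild points to verify are the containments $C'_{2r}(\tilde z,\tilde\tau)\subset C'_{100R}(x_{Q(\sbf)},t_{Q(\sbf)})$ (where we use $(\tilde z,\tilde\tau)\in C'_{50R}$ and $r\leq \eta R\leq R$) and the arithmetic $(\theta^{d+1}\delta)^{1/(d+1)}\delta^{d/(d+1)}=\theta\delta$.
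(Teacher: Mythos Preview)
Your proof is correct and follows essentially the same approach as the paper: split via $\psi=\psi_1+\psi_2$, use Taylor's theorem with Lemma \ref{decomp1} for $\psi_1$, and Lemma \ref{l1} for $\psi_2$. The only minor difference is that the paper applies Lemma \ref{l1} directly on $C'_r(z_0,\tau_0)$ rather than on the doubled cube $C'_{2r}(\tilde z,\tilde\tau)$: since $(\tilde z,\tilde\tau)\in C'_r(z_0,\tau_0)$, this cube already appears in the supremum defining $N(\psi_2)(\tilde z,\tilde\tau)$, so no enlargement is needed.
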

    \begin{proof} Let $(z,\tau)\in C'_r(z_0,\tau_0)$. Then
        \begin{multline*}
        |\psi(z,\tau)-\psi(z_0,\tau_0)-\nabla_z\psi_1(z_0,\tau_0)\cdot (z-z_0)| \\[4pt]
     \leq\,   |\psi_2(z,\tau)-\psi_2(z_0,\tau_0)|\,\,
        +\,\,|\psi_1(z,\tau)-\psi_1(z_0,\tau_0)-\nabla_z\psi_1(z_0,\tau_0)\cdot (z-z_0)|\\[4pt]
   \leq \,\,2\osc_{C'_r(z_0,\tau_0)}\psi_2
  \, \,+\,\, |\psi_1(z,\tau)-\psi_1(z_0,\tau_0)-\nabla_z\psi_1(z_0,\tau_0)\cdot (z-z_0)|.
    \end{multline*}
    Using Lemma \ref{decomp1}, and in particular \eqref{1.1++}), and Taylor's Theorem, we see that
        \begin{equation}\label{decomp1a}
        |\psi_1(z,\tau)-\psi_1(z_0,\tau_0)-\nabla_z\psi_1(z_0,\tau_0)\cdot (z-z_0)|
              \, \lesssim \,\delta R^{-1}r^2 \,\lesssim \, \delta\eta r\,,
    \end{equation}
    by our assumption that $r\leq \eta R$. Furthermore, as $\ti F\cap C'_r(z_0,\tau_0)\neq\emptyset$ we can apply Lemma \ref{l1} to conclude that
    $$\osc_{C'_r(z_0,\tau_0)}\psi_2\lesssim \theta\delta r.$$
Combining the latter estimate with \eqref{decomp1a}, we obtain \eqref{semi-taylor}.
 \end{proof}

      \begin{lemma}\label{l3}  Let $N(\psi_2)$, $\theta$ and $\ti F$ be defined as in Lemma \ref{l2}.
  Then
  $$\H^{n+1}_p\left(C'_{50R}(x_{ Q(\sbf)},t_{ Q(\sbf)})\setminus \ti F\right)\lesssim \delta^{-2}\theta^{-2(d+1)}\epsilon^2\sigma(Q(\sbf)).$$
  \end{lemma}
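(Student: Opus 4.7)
The plan is to bound $N(\psi_2)$ pointwise by a parabolic Hardy--Littlewood maximal function of $|\nabla_x\psi_2|^2+|\dhalf\psi_2|^2$, and then combine this with the $L^2$ bound from Lemma \ref{decomp2} and Chebyshev's inequality. Since on the $t$-independent hyperplane $P_{Q(\sbf)}=\mathbb R^n$ the measure $\Hpn$ is comparable to $(n+1)$-dimensional Lebesgue measure (Remark \ref{r-measures}(ii)), the assertion is equivalent to a Lebesgue-measure estimate, which by Chebyshev's inequality applied at the level $\theta^{d+1}\delta$ reduces to the $L^2$ bound
\[\iint_{\mathbb R^n}|N(\psi_2)(\tilde z,\tilde\tau)|^2\,\d\tilde z\d\tilde\tau\,\lesssim\,\epsilon^2\sigma(Q(\sbf)).\]

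The key technical step is the parabolic Poincar{\'e} inequality
\[\bariint_{C'_\rho(z,\tau)}\bigl|\psi_2-m_{C'_\rho(z,\tau)}\psi_2\bigr|^2\,\lesssim\,\rho^2\bariint_{cC'_\rho(z,\tau)}\bigl(|\nabla_x\psi_2|^2+|\dhalf\psi_2|^2\bigr),\]
valid for a fixed dilation constant $c\geq 1$. To prove it, decompose $C'_\rho=Q'\times I$, with $Q'\subset\mathbb R^{n-1}$ a spatial cube of side $\sim\rho$ and $I\subset\mathbb R$ an interval of length $\sim\rho^2$, and set $g(\tau):=|Q'|^{-1}\int_{Q'}\psi_2(z',\tau)\,\d z'$, so that $m_{C'_\rho}\psi_2=g_I$. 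Writing
\[\psi_2(z,\tau)-m_{C'_\rho}\psi_2\,=\,\bigl(\psi_2(z,\tau)-g(\tau)\bigr)\,+\,\bigl(g(\tau)-g_I\bigr),\]
the spatial piece $\psi_2(z,\tau)-g(\tau)$ is controlled, after integrating in $\tau$, by the standard Poincar{\'e} inequality on the cube $Q'$, contributing the $|\nabla_x\psi_2|^2$ term. The temporal piece $g(\tau)-g_I$ is controlled by the one-dimensional fractional Poincar{\'e} inequality $\int_I|g-g_I|^2\,\d\tau\lesssim |I|\int_I|\dhalf g|^2\,\d\tau$ (which follows from Plancherel and the fact that $\dhalf$ corresponds to the Fourier multiplier $|\tau|^{1/2}$, with $|I|\sim\rho^2$ supplying the correct scaling), combined with Jensen's inequality $|\dhalf g(\tau)|^2\leq |Q'|^{-1}\int_{Q'}|\dhalf\psi_2(z',\tau)|^2\,\d z'$, contributing the $|\dhalf\psi_2|^2$ term. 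Taking square roots, applying Cauchy--Schwarz, and passing to the supremum over parabolic cubes $C'_\rho(z,\tau)\ni(\tilde z,\tilde\tau)$ then yields the pointwise domination
\[N(\psi_2)(\tilde z,\tilde\tau)\,\lesssim\,\bigl[\M_{\mathrm{par}}\bigl(|\nabla_x\psi_2|^2+|\dhalf\psi_2|^2\bigr)(\tilde z,\tilde\tau)\bigr]^{1/2},\]
where $\M_{\mathrm{par}}$ denotes the centered parabolic Hardy--Littlewood maximal operator on $\mathbb R^n$.

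The $L^2$ boundedness of $\M_{\mathrm{par}}$ together with estimate \eqref{decomp+} of Lemma \ref{decomp2} then gives $\|N(\psi_2)\|_{L^2(\mathbb R^n)}^2\lesssim\epsilon^2\sigma(Q(\sbf))$, and Chebyshev's inequality completes the argument. The main obstacle is precisely the parabolic Poincar{\'e} inequality: a naive use of $\partial_t\psi_2$ is unavailable, since $\psi_2$ only has its \emph{half-order} time derivative in $L^2$ and not its full time derivative, so the $H^{1/2}$ Poincar{\'e} inequality in the time variable is essential. This is exactly the point where the argument in \cite{RN1} broke down, as noted in the introduction.
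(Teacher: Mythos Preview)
Your overall strategy (Chebyshev, then a parabolic Poincar\'e inequality to dominate $N(\psi_2)$ by a maximal function, then the $L^2$ bound from Lemma \ref{decomp2}) is exactly the paper's approach.  However, the specific Poincar\'e inequality you invoke in the time variable is false as stated, and this is a genuine gap.

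You claim a \emph{local} fractional Poincar\'e inequality
\[
\int_I |g-g_I|^2\,\d\tau \,\lesssim\, |I|\int_I |\dhalf g|^2\,\d\tau
\]
on an interval $I$ (or on a fixed dilate $cI$).  This cannot hold, because $\dhalf$ is nonlocal.  Concretely: let $\phi$ be a smooth bump supported well to the right of $cI$, with $\int\phi=0$ but $\int s\,\phi(s)\,\d s\neq 0$, and set $g:=I_{1/2}\phi$ (the half-order Riesz potential, i.e.\ $\widehat g(\tau)=|\tau|^{-1/2}\widehat\phi(\tau)$).  Then $\dhalf g=\phi$ vanishes identically on $cI$, so the right-hand side is zero; but $g(t)=c\int\phi(s)|t-s|^{-1/2}\,\d s$ has a nontrivial linear part in $t$ on $I$, so the left-hand side is strictly positive.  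Plancherel only gives the \emph{global} identity $\|\dhalf g\|_{L^2(\mathbb R)}^2=\int|\tau|\,|\widehat g|^2$, and there is no way to localize it to $I$.

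The correct substitute, used in the paper via \cite[Lemma 2.2]{AEN}, involves the Hilbert transform in time: one has the pointwise bound
\[
N(\psi_2)(\tilde z,\tilde\tau)\,\lesssim\,\Max(|\nabla_x\psi_2|)(\tilde z,\tilde\tau)\,+\,\Max_x\Max_t\big(|\HT\dhalf\psi_2|\big)(\tilde z,\tilde\tau),
\]
with $\Max_x,\Max_t$ the one-variable Hardy--Littlewood maximal operators.  The appearance of $\HT$ is essential precisely because of the nonlocality of $\dhalf$; heuristically, $\HT\dhalf$ behaves like a ``signed'' version of $|\partial_t|^{1/2}$ that interacts correctly with localization (compare $\partial_t=\HT\dhalf\dhalf$).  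Once this is in place, the rest of your argument goes through unchanged: $\Max_x\Max_t$ is bounded on $L^2$, and $\|\HT\dhalf\psi_2\|_{L^2}=\|\dhalf\psi_2\|_{L^2}$, so Lemma \ref{decomp2} still supplies the needed $\epsilon^2\sigma(Q(\sbf))$ bound.
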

    \begin{proof} Note that
    \begin{align} \H^{n+1}_p( C'_{50R}(x_{ Q(\sbf)},t_{ Q(\sbf)})\setminus \ti F)\leq \delta^{-2}\theta^{-2(d+1)}\iint_{\mathbb R^n} |N(\psi_2)(\tilde z,\tilde \tau)|^2\, \d \tilde z\d \tilde\tau.
    \end{align}
    To control $N(\psi_2)(\tilde z,\tilde t)$ we need to use a Poincar{\'e} inequality. To this end,
    for locally integrable $g: \mathbb R^{n} \to\mathbb R $, we let $\Max (g) $ be the
    $n$-dimensional parabolic Hardy-Littlewood maximal function
\begin{eqnarray*}
\Max (g) ( x,t) := \sup_{\varrho > 0} \bariint_{C'_\varrho ( x, t)} \, |g|  \d y\d s,
\end{eqnarray*}
and we let $\Max_x$ and $\Max_t$ denote (respectively)
the standard Hardy-Littlewood maximal operators in the $x$ and $t$ variables only. Consider $(\tilde z,\tilde\tau)\in\mathbb R^n$ and assume that $(\tilde z,\tilde\tau)\in C'_r(z,\tau)$. Then $C'_r(z,\tau)\subset C'_{2r}(\tilde z,\tilde\tau)$ and
\begin{align*}
&r^{-1}\bariint_{C'_r(z,\tau)}\biggl |\psi_2(\hat z,\hat \tau)-m_{C'_r(z,\tau)}\psi_2\biggr |\d\hat z\d\hat\tau\\
&\lesssim\,
r^{-1}\bariint_{C'_{2r}(\tilde z,\tilde\tau)}\biggl |\psi_2(\hat z,\hat \tau)-m_{C_{2r}(\tilde z,\tilde\tau)}\psi_2\biggr |\d\hat z\d\hat\tau.
\end{align*}
Using Lemma 2.2 in \cite{AEN} we see that
\begin{align*}
&r^{-1}\bariint_{C'_{2r}(\tilde z,\tilde\tau)}\biggl |\psi_2(\hat z,\hat \tau)-m_{C_{2r}(\tilde z,\tilde\tau)}\psi_2\biggr |\d\hat z\d\hat\tau\\
 &\lesssim \, \Max (|\nabla_{x}\psi_2|)(\tilde z,\tilde\tau)+ \Max_x \Max_t (|\HT \dhalf \psi_2|)(\tilde z,\tilde\tau).
\end{align*}
In particular,
\begin{eqnarray*}
N(\psi_2)(\tilde z,\tilde \tau) \lesssim \Max (|\nabla_{x}\psi_2|)(\tilde z,\tilde \tau)+ \Max_x \Max_t (|\HT \dhalf \psi_2|)(\tilde z,\tilde \tau).
\end{eqnarray*}
Hence,
\begin{align*} \H^{n+1}_p(C'_{50R}(x_{ Q(\sbf)},t_{ Q(\sbf)})\setminus \ti F)&\lesssim \delta^{-2} \theta^{-2(d+1)} \iint_{\mathbb R^n}|\nabla_x\psi_2|^2+|\dhalf\psi_2|^2\, \d z\d\tau\notag\\
&\lesssim  \delta^{-2} \theta^{-2(d+1)}\epsilon^2 \sigma(Q(\sbf)),
    \end{align*}
    by Lemma \ref{decomp2}. \end{proof}

Let $N(\psi_2)$, $\theta$ and $\ti F$ be defined as in Lemma \ref{l2}. Let $\eta\in (0,1)$, consider $(z_0,\tau_0)\in \mathbb R^n$, and assume that
    $\ti F\cap C'_r(z_0,\tau_0)\neq\emptyset$ for some $r\leq \eta R$. By Lemma \ref{l2} we have
    $$\sup_{(z,\tau)\in C'_r(z_0,\tau_0)}|\psi(z,\tau)-\psi(z_0,\tau_0)-\nabla_z\psi_1(z_0,\tau_0)\cdot (z-z_0)|\lesssim (\theta+\eta)\delta r.$$
    Letting  $\theta:=\eta$ we can conclude that
    \begin{align}\label{geo1}
    \sup_{(z,\tau)\in C'_r(z_0,\tau_0)}
    |\psi(z,\tau)-\psi(z_0,\tau_0)-\nabla_z\psi_1(z_0,\tau_0)\cdot (z-z_0)|\lesssim \eta \delta r.
    \end{align}
    Let $\tilde P=\tilde P_{C'_r(z_0,\tau_0)}$ be the time-independent plane which is the graph of the affine function
  $$L(z):= \psi(z_0,\tau_0)-\nabla_z\psi_1(z_0,\tau_0)\cdot (z-z_0).$$
  Then by  \eqref{geo1} we have
  \begin{align}\label{geo2}\sup_{(Z,\tau)\in\tilde\Sigma\cap\pi^{-1}(C'_r(z_0,\tau_0))}r^{-1}d_p(Z,\tau,\tilde P)\lesssim\eta\delta.
   \end{align}
Recall that the we are seeking a contradiction to the assumption that \eqref{pack++} holds for some $\sbf \in\mathcal{F}_1$. We will  achieve this by proving that the conclusion in \eqref{geo2} is incompatible  with $\sbf\in\mathcal{F}_1$.  Next, we prove two
  additional auxiliary lemmas to be used in the final argument.

   \begin{lemma}\label{lem3} Let $\eta$, $0<\eta\ll 1$ and $A\gg 1$ be given. Then there exists
   $\epsilon_0=\epsilon_0(n,M,\eta,A)>0$
   such that if $\epsilon\leq \epsilon_0$, then the angle between $P_Q$ and $P_{Q(\sbf)}$ is $\leq \delta/100$ for all $Q\in \sbf$ with $\diam (Q)\geq \eta R/A$.
  \end{lemma}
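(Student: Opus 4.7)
The plan is to compare $P_Q$ and $P_{Q(\sbf)}$ on a common surface ball of scale $\approx \diam(Q)$, exploiting that both planes must approximate $\Sigma$ well there and that the hypothesis $\diam(Q) \geq \eta R / A$ controls the scale ratio. More precisely, since $Q, Q(\sbf) \in \sbf \subset \mathcal{G}$, \eqref{3++} applied to both cubes gives
\[ d_p(Y,s,P_Q) \leq \epsilon \diam(Q)\text{ for all } (Y,s) \in 8KQ,\]
\[ d_p(Y,s,P_{Q(\sbf)}) \leq \epsilon R\text{ for all } (Y,s) \in 8KQ(\sbf).\]

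I then consider the surface ball $\Delta_Q(r)$ centered at $(X_Q,t_Q)$ with $r \approx \diam(Q)$. Because $Q \subset Q(\sbf)$ (by coherence), this ball lies in both $8KQ$ and $8KQ(\sbf)$ for $K \geq 1$. On $\Delta_Q(r)$, the plane $P_Q$ is within $\epsilon\diam(Q) \lesssim \epsilon r$ of $\Sigma$, while $P_{Q(\sbf)}$ is within
\[ \epsilon R \leq \frac{A\epsilon}{\eta}\diam(Q) \lesssim \frac{A\epsilon}{\eta}\,r \]
of $\Sigma$, using $\diam(Q) \geq \eta R / A$. I then apply Lemma \ref{aux2} exactly as in the proof of Lemma \ref{lemma2.21} (with $\Delta = \Delta_Q(r)$, $P_1 = P_Q$, $P_2 = P_{Q(\sbf)}$, and the small parameter there replaced by $c\,A\epsilon/\eta$) to conclude
\[\Ang(P_Q,P_{Q(\sbf)}) \leq C(n,M)\,\frac{A\epsilon}{\eta}.\]
Choosing $\epsilon_0 := c(n,M)\,\delta\,\eta/A$ so that $C(n,M)\,A\epsilon_0/\eta \leq \delta/100$ then closes the argument. (Strictly speaking, $\epsilon_0$ does also depend on $\delta$; this dependence is implicit, since $\delta$ is a fixed parameter of the overall construction.)

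The proof presents no real obstacle beyond verifying the inclusions $\Delta_Q(r) \subset 8KQ \cap 8KQ(\sbf)$ and invoking Lemma \ref{aux2} in the prescribed form. The essence of the argument is that inside a coherent stopping-time regime $\sbf$, the plane $P_{Q(\sbf)}$ chosen at the top scale $R$ cannot drift too far in angle from the local plane $P_Q$ at scale $r \approx \diam(Q) \gtrsim \eta R/A$, because both must simultaneously $\epsilon$-approximate $\Sigma$ on a surface ball of that scale, with scale ratio controlled by $A/\eta$.
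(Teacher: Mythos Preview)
Your proof is correct, but takes a different route from the paper's. The paper chains through the successive ancestors $Q=Q_0\subset Q_1\subset\cdots\subset Q_M=Q(\sbf)$, applies Lemma \ref{aux2} to each consecutive pair to get $\Ang(P_{Q_i},P_{Q_{i+1}})\lesssim\epsilon$, and sums to obtain $\Ang(P_Q,P_{Q(\sbf)})\lesssim \epsilon M\lesssim \epsilon\log(2R/\diam(Q))\lesssim \epsilon\log(A/\eta)$. You instead compare $P_Q$ and $P_{Q(\sbf)}$ directly on a single surface ball at scale $\diam(Q)$, using that $P_{Q(\sbf)}$ approximates $\Sigma$ there to within $\epsilon R\leq (A/\eta)\,\epsilon\diam(Q)$, and then invoke the surface-ball version of Lemma \ref{aux2} with parameter $\approx A\epsilon/\eta$.

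Both arguments are valid. The paper's chaining yields a sharper threshold $\epsilon_0\approx \delta/\log(A/\eta)$ versus your $\epsilon_0\approx \delta\eta/A$, but this is immaterial since $\eta,A$ are fixed auxiliary constants. Your argument is more direct and avoids the telescoping step; the paper's has the minor advantage that it uses only the dyadic-cube form of Lemma \ref{aux2} and yields, as a byproduct, the more general estimate $\Ang(P_Q,P_{Q(\sbf)})\lesssim \epsilon\log(2R/\diam(Q))$ for \emph{every} $Q\in\sbf$.
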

  \begin{proof} Consider $Q\in \sbf$, let $Q_0=Q$ and let $Q_1,...,Q_M$ denote the successive ancestors of $Q$ with $Q_M=Q(\sbf)$. By definition, see \eqref{3++}, we have
  \begin{equation}\label{3++a}
 d_p(Y,s,P_{Q_i})\leq\epsilon\diam(Q_i)\mbox{ for all }(Y,s)\in 8KQ_i\mbox{ and for all $i\in\{0,...,M\}$}.
\end{equation}
This implies, see Lemma \ref{aux2}, that the angle between $P_{Q_i}$ and $P_{Q_{i+1}}$ is bounded by $\lesssim \epsilon$. In particular,
the angle between $P_{Q}$ and $P_{Q(\sbf)}$ is bounded by
$$\lesssim\epsilon M\lesssim \epsilon\log(2R/\diam(Q)),$$
from which the result follows.\end{proof}

   \begin{lemma}\label{lem4}  If $Q\in m_1(\sbf)$, then $d_p(\ti F,\pi(Q))>\diam(Q)$.
  \end{lemma}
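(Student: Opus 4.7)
I argue by contradiction. Suppose $Q\in m_1(\sbf)$ and that there exists $(\tilde z,\tilde \tau)\in \ti F$ with $d_p(\tilde z,\tilde \tau,\pi(Q))\le \diam(Q)$; I will derive $\Ang(P_Q, P_{Q(\sbf)}) <\delta/2$, contradicting the defining property of $m_1(\sbf)$. Since $\Ang(P_Q,P_{Q(\sbf)})\ge \delta/2$, the contrapositive of Lemma \ref{lem3} (applied with target angle $\delta/100$ and a large auxiliary constant $A$) forces $\diam(Q)<\eta R/A$, provided $\epsilon$ is sufficiently small. In particular, for any fixed $\hat A$ of my choosing with $A\gg \hat A$, there is an ancestor $Q^*\in \sbf$ of $Q$ (by coherence of $\sbf$) with $\diam(Q^*)\in [\eta R/\hat A,2\eta R/\hat A]$.

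The key observation is that the affine approximant in Lemma \ref{l2}, $L(z)=\psi(\tilde z,\tilde \tau)+\nabla_z\psi_1(\tilde z,\tilde \tau)\cdot(z-\tilde z)$, depends only on the base point and not on the scale $r$. I would apply Lemma \ref{l2} twice at $(z_0,\tau_0)=(\tilde z,\tilde \tau)$: once at a scale $r_1$ of order $\diam(Q)$, chosen so that $\pi(Q)\subset C'_{r_1}(\tilde z,\tilde\tau)$; and once at a scale $r_2$ of order $\diam(Q^*)$, chosen so that $\pi(Q^*)\subset C'_{r_2}(\tilde z,\tilde\tau)$. Both applications produce the \emph{same} time-independent plane $\ti P$ (the graph of $L$), approximating $\tilde\Sigma$ on the respective regions with errors of order $(\theta+r_j/R)\delta r_j$, and $r_j/R\lesssim \eta/\hat A$ can be made small by taking $\hat A$ large.

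By Lemma \ref{closegraph.lem}, $\Sigma\cap \kappa Q'$ lies within $c\epsilon\diam(Q')$ of $\tilde\Sigma$ for any $Q'\in\sbf$; combining this with the above I deduce that $\Sigma\cap Q$ lies within $(\epsilon+c\theta\delta)\diam(Q)$ of $\ti P$ and $\Sigma\cap Q^*$ lies within $(\epsilon+c\theta\delta+c\eta\delta/\hat A)\diam(Q^*)$ of $\ti P$. Since $Q,Q^*\in\mathcal{G}$, we also have $\Sigma\cap 8KQ'$ within $\epsilon\diam(Q')$ of $P_{Q'}$ for $Q'\in\{Q,Q^*\}$. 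Applying Lemma \ref{aux2} on the surface balls $Q$ and $Q^*$ respectively then yields
\[ \Ang(\ti P,P_Q)\lesssim \epsilon+\theta\delta, \qquad \Ang(\ti P,P_{Q^*})\lesssim \epsilon+\theta\delta+\eta\delta/\hat A. \]

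Finally, since $\diam(Q^*)\ge \eta R/\hat A$, Lemma \ref{lem3} applied to $Q^*$ (with $\epsilon$ chosen small enough in terms of $\hat A$) gives $\Ang(P_{Q^*},P_{Q(\sbf)})\le \delta/100$. The triangle inequality for angles yields
\[ \Ang(P_Q,P_{Q(\sbf)})\le \Ang(P_Q,\ti P)+\Ang(\ti P,P_{Q^*})+\Ang(P_{Q^*},P_{Q(\sbf)})\lesssim \epsilon+\theta\delta+\eta\delta/\hat A+\tfrac{\delta}{100}. \]
Selecting first $\hat A$ large, then $\theta$ small (depending only on the structural constants hidden in Lemmas \ref{l2}, \ref{closegraph.lem}, \ref{aux2}), then $\epsilon\ll \delta$ small enough to activate all the smallness clauses, the right-hand side is strictly less than $\delta/2$, contradicting $Q\in m_1(\sbf)$. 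The main obstacle I expect is not any single estimate but the parameter bookkeeping required to keep this new chain of smallness requirements compatible with the pre-existing hierarchy $\epsilon\ll \delta\ll 1\ll \kappa\ll K$; the conceptual point is that exploiting the scale-invariance of the affine approximant in Lemma \ref{l2} sidesteps the need for any direct pointwise bound on $\nabla_z\psi_1(\tilde z,\tilde \tau)$.
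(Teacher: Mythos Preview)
Your proof is correct and follows essentially the same route as the paper's. Both arguments hinge on the observation that the affine approximant $L$ (and hence the plane $\ti P$) furnished by Lemma~\ref{l2} depends only on the base point and not on the scale, so one may apply it at scale $\sim\diam(Q)$ and at scale $\sim\diam(Q^*)$ with the \emph{same} $\ti P$, and then compare $P_Q$ and $P_{Q^*}$ to $\ti P$ via Lemma~\ref{aux2} (the paper phrases this through Lemma~\ref{aux1} directly), finishing with Lemma~\ref{lem3} applied to $Q^*$. The paper centers its cubes at $(x_Q,t_Q)=\pi(X_Q,t_Q)$ rather than at a point of $\ti F$, takes $Q^*$ to be the largest ancestor with $10\diam(Q^*)\le \eta R$ rather than introducing an auxiliary $\hat A$, and sets $\theta:=\eta$ beforehand rather than tracking $\theta$ separately; these are cosmetic differences only.
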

  \begin{proof} Assume, for the sake of obtaining a contradiction, that there exists $Q\in m_1(\sbf)$ such that  $d_p(\ti F,\pi(Q))\leq \diam(Q)$. Let
  $(X_Q,t_Q)$ be the center of $Q$, let $(z_0,\tau_0):=(x_Q,t_Q):=\pi(X_Q,t_Q)$, $r=10\diam(Q)$.  Consider $C'_r(z_0,\tau_0)$. By the assumption and the construction  $\ti F\cap C'_r(z_0,\tau_0)\neq \emptyset$. Recall also that $m_1(\sbf)$
denotes the cubes in $m(\sbf)$ for which the angle between $P_Q$  and $P_{ Q(\sbf)}$ is greater or equal to $\delta/2$. Hence, by Lemma \ref{lem3} we can conclude that $r\leq \eta R$ if $\epsilon$ sufficiently small. Furthermore, $C'_r(z_0,\tau_0)\subseteq  C'_{100R}(x_{ Q(\sbf)},t_{ Q(\sbf)})$.
By Lemma \ref{closegraph.lem}, 
for $(Z,\tau)\in Q$ we have
  \begin{align}\label{geo3}d_p(Z,\tau, (z,\psi(z,\tau),\tau)))\lesssim \epsilon d(z,\tau)\lesssim \epsilon \diam(Q),\ (z,\tau)=\pi(Z,\tau).
   \end{align}
   Furthermore, by \eqref{geo2} we also know that
   \begin{align*}\sup_{(Z,\tau)\in\tilde\Sigma\cap\pi^{-1}(C'_r(z_0,\tau_0))}d_p(Z,\tau,\tilde P)\lesssim\eta\delta r,
   \end{align*}
   for a time independent plane $\tilde P \in \mathcal{P}$. The inequalities in the last two displays imply
   that if $(Z,\tau)\in Q$, and if $\epsilon\lesssim \eta \delta$, then
   \begin{align}\label{geo4} d_p(Z,\tau,\tilde P)\lesssim\eta\delta r.
    \end{align}
  Upon observing that we can produce $n+1$ points of $Q$ with ample distance from each other using Lemma \ref{aux1}, the estimate \eqref{geo4}  implies that the angle between
    $\tilde P$ and $P_Q$ is $\lesssim\eta\delta<\delta/100$ if we choose $\eta$ and $\epsilon$ small.
    Let $Q^\ast$ be the largest ancestor to $Q$ such that $10\diam(Q^\ast)\leq \eta R$. The same argument as above then gives that
  the angle between $\tilde P$ and $P_{Q^\ast}$ is $\leq\delta/100$.
  In particular, the angle between $P_{Q^\ast}$ and $P_Q$  must be $\leq\delta/50$.
 On the other hand, we may apply Lemma \ref{lem3} to $Q^*$ to deduce that
the angle between $P_{Q^*}$ and $P_{Q(\sbf)}$ is at most
$\delta/100$, and thus that the angle between $P_Q$ and $P_{Q(\sbf)}$ is at most
$\delta/33$.
  This is incompatible with $Q\in m_1(\sbf)$ and hence we have reached a contradiction.\end{proof}

We are now ready finally to complete the proof of Theorem \ref{main} by proving that $\sbf\notin\mathcal{F}_1$.  We see that by Lemma \ref{lem4} it suffices to prove that \eqref{pack++} implies that there is at least one $Q\in m_1(\sbf)$ with $d_p(\ti F,\pi(Q))\leq\diam(Q)$ where $\ti F$ is defined as in Lemma \ref{l2}.  Let
   \begin{align}\label{geo5}\mathbf{F}={\bigcup_{Q\in m_1(\sbf)}Q.}
   \end{align}
  There exists a collection $\mathbf{S}'\subseteq m_1(\sbf)$ such that
   $\mathbf{F}$ can be covered by $$\{C_{20\diam(Q)}(Z_Q,\tau_Q)\}_{Q\in \mathbf{S}'}$$ in such a way that
   $$\{C_{3\diam(Q)}(Z_Q,\tau_Q)\}_{Q\in \mathbf{S}'}$$
   is a disjoint collection. Hence,
      \begin{align}\label{geo6-}\sigma(\mathbf{F})\lesssim
      \sigma{\left(\bigcup_{Q\in \mathbf{S}'}C_{20\diam(Q)}(Z_Q,\tau_Q) \cap \Sigma\right)}\lesssim
      \sum_{Q\in \mathbf{S}'} (\diam(Q))^d.
   \end{align}
   Using this fact, and our usual notation
$(z_Q,\tau_Q)=\pi(Z_Q,\tau_Q)$,  we can use by proof of Lemma \ref{lem1} {(specifically,
the fact that \eqref{6} holds whenever $(X,t),\ (Y,s)\in 16 \kappa Q(\sbf)$ and
$\min\{d(X,t),d(Y,s)\} \le 1000 d_p(X,t, Y,s)$)}, to conclude that the collection
$$\{C'_{\diam(Q)}(z_Q,\tau_Q)\}_{Q\in \mathbf{S}'}$$
is also pairwise disjoint. Hence
   \begin{align}\label{geo6}\H^{n+1}_p{\left(\bigcup_{Q\in \mathbf{S}'}C'_{\diam(Q)}(z_Q,\tau_Q)\right)}\gtrsim
   \sigma(\mathbf{F})\,.
   \end{align}
Moreover,  every set in $\{C'_{\diam(Q)}(z_Q,\tau_Q)\}_{Q\in \mathbf{S}'}$ is contained in $C'_{50R}(x_{ Q(\sbf)},t_{ Q(\sbf)})$ and by Lemma \ref{lem4},
none of the sets in the collection
   $\{C'_{\diam(Q)}(z_Q,\tau_Q)\}_{Q\in \mathbf{S}'}$ intersect $\ti F$. Hence, using this observation
   and \eqref{geo6}, we see that
   \begin{align}\label{geo7}\sigma(\mathbf{F})\lesssim \H^{n+1}_p{\left(\bigcup_{Q\in \mathbf{F}'}C'_{\diam(Q)}(z_Q,\tau_Q)\right)}\lesssim \H^{n+1}_p(C'_{50R}(x_{ Q(\sbf)},t_{ Q(\sbf)})\setminus \ti F ).
   \end{align}
Using Lemma \ref{l3} with $\theta=\eta$ we have
    \begin{align}\label{geo8}\sigma(\mathbf{F})\lesssim
    \delta^{-2}\theta^{-2(d+1)}\epsilon^2\sigma(Q(\sbf))\approx
    \delta^{-2}\eta^{-2(d+1)}\epsilon^2\sigma(Q(\sbf))\,.
   \end{align}
   Hence,
   \begin{align}\label{geo8+}\sigma(\mathbf{F})\leq c\delta^{-2}\eta^{-2(d+1)}\epsilon^2\sigma(Q(\sbf)),
   \end{align}
   for some $c=c(n,M,\Gamma,\kappa,K)\geq 1$. Given $\delta$ and $\eta$, we let $\epsilon_1:=\delta\eta^{d+1}/\sqrt{4c}$. Let also $\epsilon_0=\epsilon_0(n,M,\eta,A)>0$, with $A\gg 1$ fixed, be as in the statement of Lemma \ref{lem3}. We now choose
       \begin{align}\label{geo9-}
       \epsilon:=\min\{\epsilon_0,\epsilon_1\}.
   \end{align}
    Then $\epsilon=\epsilon(n,M,\Gamma,\kappa,K,\delta,\eta,A)$, and in fact, as $\eta$ and $A$ can be chosen as  constants with no particular dependencies, $\epsilon$ can be seen simply as a function of $n,M,\Gamma,\kappa,K$ and $\delta$.   By the choice for $\epsilon$,
       \begin{align}\label{geo9}\sigma(\mathbf{F})\leq \sigma(Q(\sbf))/4.
   \end{align}
    The estimate \eqref{geo9} contradicts the statement that $\sbf \in \mathcal{F}_1$. As we had reduced matters to showing $\sbf \not \in \mathcal{F}_1$, we have proved Theorem \ref{main}.

\section{The proof of Theorem \ref{main.bil.thrm}}\label{s5}
In this section we prove Theorem \ref{main.bil.thrm} by improving Theorem \ref{main}(4) to give a
bilateral approximation as in Theorem \ref{main.bil.thrm}(4).   The latter is a parabolic version
of a result appearing in \cite{HMM1}.
The most important observation here is that in the proof of Theorem \ref{main} the condition \eqref{dico} (B), for the stopping time regimes, was {only} used to pack the maximal cubes of the stopping time regimes (Section \ref{packsttime}). In particular, the construction of the graphs requires only
property \eqref{dico} (A) of the stopping time regimes (and the fact that $\Sigma$ is p-UR). With this in mind, we are going to use the packing of the stopping time regimes in Theorem \ref{main} to prove the packing of stopping time regimes where \eqref{dico} (A) still holds, along with a stronger, {\it bilateral} approximation by a $t$-independent plane. Then we can run the machine that produces the regular Lip(1,1/2) graphs and never carry out the arguments of Section \ref{packsttime} (since the stopping time regimes will already pack) and we will use this bilateral approximation by planes to improve the approximation by graphs to a bilateral one.

\begin{definition}\label{PBWGL}
Let $\Sigma$ be a parabolic Ahlfors-David Regular set with dyadic grid $\mathbb{D}(\Sigma)$. For $Q \in \mathbb{D}(\Sigma)$ and $K > 1$ we let $b\beta_\infty(Q):= b\beta_\infty(Q,K)$ denote the number
\begin{equation*}
 \inf_{P \in \mathcal{P}} \diam(Q)^{-1} 
 \left[ \sup_{\{(Y,s)\in 8KQ\}} d_p(Y,s,P) \,\,\,+ 
 \sup_{\{(X,t)\in C_{8K\diam(Q)} \cap P \}} d_p(X,t,\Sigma)  \right].
\end{equation*}
Given $\epsilon > 0$, $K > 1$, we say that the $(\epsilon, K)$-parabolic bilateral weak geometric lemma ($(\epsilon, K)$-PBWGL) holds for $\Sigma$, if there exists a constant $c(\epsilon,K)$ such that
\[\sum_{\substack{b\beta_\infty(Q,K)  \ge \epsilon \\ Q \subseteq Q^*}} \sigma(Q) \le c(\epsilon,K) \sigma(Q^*), \quad \forall Q^* \in \mathbb{D}(\Sigma). \]
\end{definition}

Combining Theorem \ref{main} with our previous work \cite{BHHLN1} we immediately obtain the following\footnote{The notation GPG in \cite{BHHLN1} means `good parabolic graph', that is, a regular Lip(1,1/2) graph.}.
\begin{theorem}[{\cite[Theorem 4.15]{BHHLN1}}]\label{URimpPBWGL}
Let $\Sigma$ be a parabolic Ahlfors-David regular set. If $\Sigma$ is parabolic UR with constants $M$ and $\Gamma$, then $\Sigma$ satisfies the $(\epsilon, K)$-PBWGL for every $\epsilon \in (0,1]$ and $K > 1$.
\end{theorem}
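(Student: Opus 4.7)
The plan is to deduce Theorem \ref{URimpPBWGL} as a short combination of Theorem \ref{main} (which is the main work of this paper up to this point) with results already established in \cite{BHHLN1}. By Theorem \ref{main}, the hypothesis of parabolic UR implies property (b) of Theorem \ref{thrm.char}, namely that $\Sigma$ admits a corona decomposition with respect to regular Lip(1,1/2) graphs. From \cite{BHHLN1} we already have (b) $\implies$ (d), so $\Sigma$ is also big pieces squared of regular Lip(1,1/2) graphs. The remaining ingredient is the implication from either (b) or (d) to the PBWGL, which is exactly Theorem~4.15 of \cite{BHHLN1}. So operationally the proof is: invoke Theorem \ref{main}, then quote the relevant result from \cite{BHHLN1}.

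If I were to reconstruct the PBWGL step from scratch, I would proceed as follows. Fix a stopping time regime $\sbf \in \mathcal{F}$ with approximating graph $\Sigma_\sbf$ (regular Lip(1,1/2) with parameters $b_1 \lesssim \delta$, $b_2 \lesssim 1$), and fix $Q \in \sbf$. Theorem \ref{main}(4) gives the unilateral approximation $\sup_{(X,t) \in \kappa Q} d_p(X,t,\Sigma_\sbf) \leq \delta \diam(Q)$. To obtain the \emph{bilateral} approximation by a $t$-independent plane $P$, one first uses that $\Sigma_\sbf$ itself, being a Lip(1,1/2) graph with small constant, is bilaterally close to its own tangent (or best approximating) $t$-independent plane on $8K Q$, with error $\lesssim \delta \diam(Q)$. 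Composing, $\Sigma$ is unilaterally close to $P$ on $8KQ$. For the reverse direction (points of $P$ near $\Sigma$), the BP$^2$ structure from (d) is used: a big piece of $\Sigma$ near $Q$ sits inside a big piece of a Lip(1,1/2) graph, which forces bilateral density of $\Sigma$ near $\Sigma_\sbf$ (and hence near $P$) on most cubes. Cubes where this fails are either in the original bad collection $\B$ of Theorem \ref{main}, or are minimal in some $\sbf$ (and the maximal cubes pack by Theorem \ref{main}(3)), or lie in a further exceptional set whose Carleson packing is controlled by a Chebyshev argument against the $\beta_2^2 \, d\sigma \, dr/r$ Carleson measure estimate \eqref{eq1.sf}.

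The main obstacle, and the reason the proof in \cite{BHHLN1} is non-trivial, is precisely the passage from the unilateral approximation of $\Sigma$ by the graph $\Sigma_\sbf$ to a bilateral approximation by a plane: the corona decomposition of Theorem \ref{main} only controls one side of the approximation, and the ``inverse direction'' (that there are enough $\Sigma$-points near the plane) needs extra input, which in \cite{BHHLN1} is supplied by the BP$^2$ structure together with ADR and a Chebyshev-type packing argument. Once that is in hand, the Carleson packing condition of Definition \ref{PBWGL} is a formal consequence.
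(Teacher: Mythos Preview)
Your proposal is correct and matches the paper's own treatment exactly: the paper does not give an independent proof but simply states that the result follows by combining Theorem \ref{main} (which yields the corona decomposition, i.e., property (b)) with \cite[Theorem 4.15]{BHHLN1}. Your first paragraph captures precisely this logic, and the additional ``from scratch'' sketch you provide is extra commentary beyond what the paper itself supplies.
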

The reader may notice that the $\beta$ numbers used here are defined slightly differently compared to \cite{BHHLN1}, and that the (dilation) parameter $K$ does not appear in Theorems 4.15 \cite{BHHLN1}. However, these differences represent no problems,  see \cite[Remark 2.24]{BHHLN1}. In particular, it turns out that $(\epsilon, K)$-PBWGL for some fixed $K$ and all $\epsilon> 0 $ implies that the $(\epsilon, K)$-PBWGL for all $K$ and all $\epsilon> 0$, so that we may use the result of \cite{BHHLN1} to first prove the $(\epsilon, 2)$-PBWGL for all $\epsilon> 0$ and obtain $(\epsilon, K)$-PBWGL for all $K$ and all $\epsilon> 0$. This improvement of $K$ be proved with techniques in \cite[I. 3.2]{DS2} specifically using Lemma 3.27 in part I.

The following lemma will be important in the proof of Theorem \ref{main.bil.thrm}.
\begin{lemma}\label{BWGLcorona.lem}
Suppose $\Sigma$ is a parabolic UR set with parameters $M$ and $\Gamma$. Let $\epsilon \ll\delta$ and $K \gg\kappa$ be as in the proof of Theorem \ref{main}.
If we define
\begin{align*}
\mathcal{G}^*&: = \mathcal{G}^*(\epsilon, K) := \{Q \in \mathbb{D}(\Sigma): b\beta_\infty(Q,K) < \epsilon\},\\
\mathcal{B}^*&: = \mathcal{B}^*(\epsilon, K) = \{Q \in \mathbb{D}(\Sigma): b\beta_\infty(Q,K) \ge \epsilon\},
\end{align*}
then $\mathbb{D}(\Sigma) = \mathcal{G}^* \cup \mathcal{B}^*$, and the following hold.
\begin{enumerate}
\item The cubes $\mathcal{G}^*$ can be decomposed into disjoint, stopping time regimes $\{\sbf^*\}_{\sbf^* \in \mathcal{F}^*}$, such that each $\sbf^*$ is coherent and
\[\Ang(P_Q, P_{Q(\sbf^*)}) \le 4 \delta, \quad \forall Q \in \sbf^*,\]
where $P_Q \in \mathcal{P}$ is the plane minimizing the quantity in the definition of $b\beta_\infty(Q,K)$.
\item With the decomposition in (1), the `bad' cubes $\mathcal{B}^*$ and the maximal cubes $\{Q(\sbf^*)\}_{\sbf^* \in \mathcal{F}^*}$ satisfy a packing condition
\[\sum_{\substack{Q \in \mathcal{B}^* \\ Q \subseteq R}} \sigma(Q) + \sum_{\sbf^*: \substack{Q(\sbf^*)  \subseteq Q^*}} \sigma(Q(\sbf))) \leq c(\epsilon,K) \sigma(Q^*), \quad \forall Q^* \in \mathbb{D}(\Sigma).\]
\end{enumerate}
\end{lemma}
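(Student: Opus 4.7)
The plan is to leverage the decomposition already furnished by Theorem~\ref{main} and refine it using the bilateral information from Theorem~\ref{URimpPBWGL}. First observe the trivial inequality $b\beta_\infty(Q,K) \ge \beta_\infty(Q,K)$, which gives $\mathcal{B}(\epsilon,K) \subseteq \mathcal{B}^*(\epsilon,K)$ and hence $\mathcal{G}^*(\epsilon,K) \subseteq \mathcal{G}(\epsilon,K)$. The packing of $\mathcal{B}^*$ asserted in (2) is then immediate from the PBWGL provided by Theorem~\ref{URimpPBWGL}, so only the construction of the stopping regimes and the packing of their maximal cubes require further work.

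For the construction, I would apply Theorem~\ref{main} to obtain a decomposition $\mathbb{D}(\Sigma) = \mathcal{B}' \cup \mathcal{G}'$, where $\mathcal{G}'$ is partitioned into coherent stopping regimes $\{\sbf\}_{\sbf \in \mathcal{F}}$ whose unilateral minimizer planes $\tilde P_Q$ satisfy $\Ang(\tilde P_Q, \tilde P_{Q(\sbf)}) \le \delta$ for every $Q \in \sbf$. For each such $\sbf$, I would then consider the sub-collection $\sbf \cap \mathcal{G}^*$ and partition it into maximal coherent sub-regimes $\{\sbf^*\}$ via the standard tree-decomposition algorithm (cf.\ \cite[Section 7]{DS1}): a cube in $\sbf \cap \mathcal{G}^*$ is declared a new regime root whenever its dyadic parent either lies in $\mathcal{B}^*$, or is a split point where siblings diverge into $\mathcal{G}^*$ and its complement. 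The resulting maximal cubes $Q(\sbf^*)$ are then of two types: either $Q(\sbf^*)=Q(\sbf)$ for some $\sbf \in \mathcal{F}$, in which case packing follows from Theorem~\ref{main}(3); or the dyadic parent of $Q(\sbf^*)$ belongs to $\mathcal{B}^*$, in which case packing follows from Theorem~\ref{URimpPBWGL} together with the uniformly bounded number of dyadic children per cube.

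To verify the angle condition, I would fix $Q \in \sbf^*$ with root $Q(\sbf^*)$, and compare the bilateral minimizer planes $P_Q, P_{Q(\sbf^*)}$ to the corresponding unilateral minimizers. Since $Q \in \mathcal{G}^* \subseteq \mathcal{G}$, both $P_Q$ and $\tilde P_Q$ lie within $\epsilon \diam(Q)$ of the same portion of $\Sigma$ on $8KQ$, so Lemma~\ref{aux2} yields $\Ang(P_Q, \tilde P_Q) \lesssim \epsilon$, and similarly at the root. Combining with the unilateral angle estimates $\Ang(\tilde P_Q, \tilde P_{Q(\sbf)}), \Ang(\tilde P_{Q(\sbf^*)}, \tilde P_{Q(\sbf)}) \le \delta$ provided by Theorem~\ref{main}, the triangle inequality gives
\[
\Ang(P_Q, P_{Q(\sbf^*)}) \,\le\, 2\delta + C\epsilon \,\le\, 4\delta,
\]
once $\epsilon$ is taken sufficiently small compared to $\delta$, which is already enforced in the working regime of Theorem~\ref{main}.

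The main obstacle I foresee is ensuring coherence condition (c) --- namely that every child of a cube in $\sbf^*$ either belongs to $\sbf^*$ or none do --- without corrupting the packing estimate for the maximal cubes. A cube in $\sbf \cap \mathcal{G}^*$ may well have one child in $\mathcal{G}^*$ and another in $\mathcal{B}^*$, and the coherence-preserving remedy is to declare such a parent minimal in its current sub-regime while opening a brand-new sub-regime at each child in $\mathcal{G}^*$. The newly-created roots are, by construction, dyadic children of cubes one of whose other children lies in $\mathcal{B}^*$; hence the collection of such roots is in bounded-to-one correspondence (via the parent map and the uniformly bounded number of dyadic children) with a subset of $\mathcal{B}^*$, and its packing reduces once more to the PBWGL packing for $\mathcal{B}^*$. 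This closes the proof of (2).
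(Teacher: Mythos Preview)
Your proposal is correct and follows essentially the same approach as the paper: both use Theorem~\ref{URimpPBWGL} for the $\mathcal{B}^*$ packing, refine each regime $\sbf$ from Theorem~\ref{main} by intersecting with $\mathcal{G}^*$ and re-partitioning into coherent sub-regimes, and compare bilateral and unilateral minimizing planes via Lemma~\ref{aux2}. The paper defers the re-partitioning and packing details to \cite[Lemma 2.2]{HMM1}, whereas you spell out the root-creation mechanism and the bounded-to-one correspondence with $\mathcal{B}^*$ explicitly; in that sense your write-up is slightly more self-contained, but the underlying argument is the same.
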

\begin{proof} The packing for the cubes $\mathcal{B}^*$ is a consequence of Lemma \ref{URimpPBWGL}.
By the proof of Theorem \ref{main} the decomposition above holds with $\mathcal{G}^*(\epsilon,K)$ replaced by
\[\mathcal{G} = \mathcal{G}(\epsilon,K)= \{Q \in \mathbb{D}(\Sigma): \beta_\infty(Q,K) < \epsilon\},  \] and with $\mathcal{B}^*$ replaced by
the complement of $\mathcal{G}$ in $\mathbb{D}(\Sigma)$.  In particular, there exists a decomposition of $\mathcal{G}$ into stopping time regimes $\{\sbf\}_{\sbf \in \mathcal{F}}$ such that
\[\Ang(P'_Q, P'_{Q(\sbf)}) \le \delta, \quad \forall Q \in \sbf,\]
where $P'_Q$ is the plane that minimizes $\beta_\infty(Q,K)$. Now this implies that
\[\Ang(P'_Q, P'_R) \le 2\delta, \quad \forall Q,R \in \sbf.\]
Moreover, since $\epsilon \ll \delta$, Lemma \ref{aux2} implies that
\[\Ang(P_Q, P_R) \le \delta, \quad \forall Q,R \in \sbf \cap \mathcal{G}^*.\]
Now to prove the lemma, we follow the most natural course of action and simply check that for each $\sbf$, we can decompose $\sbf \cap \mathcal{G}^*$ into stopping time regimes $\{\sbf^*\}_{\mathcal{F}^*_\sbf}$ in a way that maintains the packing condition. This can be done exactly as in \cite[Lemma 2.2]{HMM1}. We describe the process of partitioning $\sbf \cap \mathcal{G}^*$, and we leave the verification\footnote{This verification is carried out in detail in \cite{HMM1}.} of the packing condition to the interested reader. Given $\sbf \in \mathcal{F}$, we let $\mathcal{M}_\sbf$ be the set of $Q \in \sbf \cap \mathcal{G}^*$ for which either $Q = Q(\sbf)$ or else the dyadic parent of $Q$ or one of its siblings belongs to $\mathcal{B}^*$. For each $Q \in \mathcal{M}_\sbf$ we form a new stopping time regime $\sbf^* = \sbf^*(Q)$ as follows. Set $Q(\sbf^*) = Q$ and `subdivide' $Q(\sbf^*)$ `dyadically' until we reach a subcube $Q'$ such that $Q' \not\in \sbf$, or else $Q'$ or one of its siblings belongs to $\mathcal{B}^*$. In any such scenario, $Q'$ and all of its siblings are omitted from $\sbf^*$ and $Q'$ is a minimal cube of $\sbf^*$.  It then follows that $\sbf^* \subset \sbf \cap \mathcal{G}^*$ and that $\sbf^*$ is coherent. Moreover,
\[\sbf \cap \mathcal{G}^* =  {\bigcup_{Q \in \mathcal{M}_\sbf}} \sbf^*(Q),\]
and since $\mathcal{G} = \cup_{\sbf \in \mathcal{F}} \sbf$ and $\mathcal{G}^* \subseteq \mathcal{G}$, it holds that
\[\mathcal{G}^* = {\bigcup_{\mathcal{F}^*}}\, \sbf^*,\]
where $\mathcal{F}^* = \{\sbf^*(Q)\}_{Q \in \mathcal{M}_\sbf, \sbf \in \mathcal{F}}$. This concludes the proof of the lemma.
\end{proof}

\begin{proof}[Proof of Theorem \ref{main.bil.thrm}]
We will prove Theorem \ref{main.bil.thrm} with $\delta$ in (4) replaced by $2\delta$.
We begin, as in Theorem \ref{main}, by considering  $\epsilon \ll \delta$ and $K \gg \kappa$, and compared to  Theorem \ref{main}, a difference is that we now work with the stopping time regimes $\{\sbf^*\}_{\sbf^* \in \mathcal{F}^*}$ produced in Lemma \ref{BWGLcorona.lem}. Here we have no intention, or need, to carry out the analysis in Section \ref{packsttime}, and we can repeat the arguments verbatim up until that point. Since we already have proven the packing condition for the maximal cubes in Lemma \ref{BWGLcorona.lem}, all conclusions of  Theorem \ref{main.bil.thrm} hold with the potential exception of the estimate
\begin{equation}\label{bilgoal.eq}
\sup_{(Y,s)\in C_{\kappa \diam(Q)} \cap \Sigma_{\sbf^*}} d_p(Y,s, \Sigma) \le 2\delta\diam(Q), \quad \forall Q \in \sbf^*.
\end{equation}
Hence, to prove Theorem \ref{main.bil.thrm} we have to prove \eqref{bilgoal.eq}. We continue to use the notation $P_Q$ of Lemma \ref{BWGLcorona.lem}.
Fix $Q \in \sbf^*$ and $(Y,s) \in C_{\kappa \diam(Q)} \cap \Sigma_{\sbf^*}$.  Since $Q \in \sbf^*$ we have that $P_Q$ satisfies
\[\diam(Q)^{-1} \big[ \sup_{(Y,s)\in 8KQ} d_p(Y,s,P_Q) + \sup_{(X,t)\in C_{8K\diam(Q)} \cap P_Q } d_p(X,t,\Sigma)\big] < \epsilon,\]
and
\[\Ang(P_Q, P_{Q(\sbf)}) \le 4 \delta.\]
We assume, by translation and a change of coordinates, that $P_Q = \mathbb{R}^{n-1} \times \{0\} \times \re$.
Recalling that $\Sigma_{\sbf^*}$ is the graph of a Lip(1,1/2) function with Lip(1,1/2) constant $\lesssim \delta$, with respect to the coordinates induced by $P_{Q(\sbf^*)}$, it
follows, if we choose $\delta$ to be sufficiently small, that $\Sigma_{\sbf^*}$ is the graph of a Lip(1,1/2) function $\ti \psi: \rn \to \re$ with Lip(1,1/2) constant $\lesssim \delta$. In particular if $(x,0,t),(y,0,t) \in P_Q$ then
\begin{equation}\label{graphsmallpq.eq}
d_p((x,\ti\psi(x,t), t), (y,\ti\psi(y,s), t)) \le c(1 + \delta)d_p(x,t,y,s).
\end{equation}
Now let $(y,0,s)$ be the projection of $(Y,s)$ into $P_Q$. Then,  $(y,0,s) \in C_{(3/2)K\diam(Q)}(X_Q,t_Q) \cap P_Q$ as $b\beta_\infty(Q,K) < \epsilon$.
Again using that $b\beta_\infty(Q,K) < \epsilon$, we can conclude that there exists $(X,t) \in \Sigma \cap (7/8)\kappa Q \subseteq 2\kappa Q(\sbf^*)$ such that
\begin{equation}\label{Xtcloseys.eq}
d_p(X,t, (y,0,s)) < \epsilon \diam(Q),
\end{equation}
and
\begin{equation}\label{Xtprojcloseys.eq}
d_p(X,t, P_Q) = d_p((X,t), (x,0,t)) < \epsilon\diam(Q).
\end{equation}
The inequalities \eqref{Xtcloseys.eq} and \eqref{Xtprojcloseys.eq} imply
\[
d_p((x,0,t), (y,0,s)) \le 2\epsilon \diam(Q),
\]
and hence, by \eqref{graphsmallpq.eq},
\begin{equation}\label{graphsmallpq2.eq}
\begin{split}
d_p((x,\ti\psi(x,t), t), (Y,s)) & = d_p((x,\ti\psi(x,t), t), (y,\ti\psi(y,s), t))
\\& \le c(1 + \delta)2\epsilon \diam(Q) \le c \epsilon \diam(Q).
\end{split}
\end{equation}
By Lemma \ref{closegraph.lem}, which holds by construction, we have
\begin{equation}\label{ult}
d_p((X,t), (x,\ti\psi(x,t), t)) \le 2d_p(X,t, \Sigma_{\sbf^*}) \le c\kappa \epsilon \diam(Q),
\end{equation}
where we have used that $\ti\psi(x,t)$ is Lip(1,1/2) with small constant. Assuming that $\epsilon \ll \delta/\kappa$, the inequality in \eqref{ult} yields
\[d_p((X,t), (x,\ti\psi(x,t), t)) \le \delta \diam(Q).\]
Combining this inequality with \eqref{graphsmallpq2.eq}, we find that
\begin{align*}
d_p(X,t, Y,s) &\le d_p((X,t), (x,\ti\psi(x,t), t)) + d_p((x,\ti\psi(x,t), t), (Y,s))\\
& \le \delta + c\epsilon < 2\delta.
\end{align*}
This proves \eqref{bilgoal.eq} and  Theorem \ref{main.bil.thrm}.
\end{proof}

\appendix
\section{Two lemmas concerning approximating planes}\label{subsplanes}
In this appendix we prove two auxiliary and technical lemmas on approximating hyperplanes.
\begin{lemma}
\label{aux1} Let $\Sigma \subset \mathbb{R}^{n+1}$ be a parabolic Ahlfors-David regular set with dyadic cubes $\mathbb{D}(\Sigma)$. Then there exists a constant $A=A(n,M)\gg 1$ such that the following holds. Let $Q\in \mathbb D=\mathbb D(\Sigma)$. Suppose that $(Z_0,\tau_0)\in Q$. Then there exist points $(Z_i,\tau_i)\in Q$ for $i=1,...,n$, such that if we let,  for $j=1,...,n$, $L_{j-1}$ be the spatial $(j-1)$-dimensional plane which passes through $Z_0,Z_1,...,Z_{j-1}$, then
$$d_p(Z_j,\tau_j,L_{j-1}\times \mathbb R)\geq A^{-1}\diam(Q)$$ for $j=1,...,n$.
Moreover, the same result holds if $Q$ is replaced by $\lambda Q$ for some $\lambda \geq1$, with $A$ now depending on $\lambda$ as well.
\end{lemma}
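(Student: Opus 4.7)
The plan is to proceed by greedy induction on $j\in\{1,\ldots,n\}$: having selected $(Z_0,\tau_0),\ldots,(Z_{j-1},\tau_{j-1})\in Q$, I will produce $(Z_j,\tau_j)$ by a measure-theoretic argument that rules out only a small portion of $Q$. Let $L_{j-1}\subset\mathbb R^n$ denote the $(j-1)$-dimensional spatial affine span of $Z_0,\ldots,Z_{j-1}$, put $r:=\diam Q$, and for a large parameter $A\gg 1$ to be chosen, introduce the bad set
\begin{equation*}
B_j \,:=\, \bigl\{(Z,\tau)\in Q : d_p(Z,\tau, L_{j-1}\times\mathbb R)<A^{-1}r\bigr\}.
\end{equation*}
Since $L_{j-1}\times\mathbb R$ is time-independent, $d_p(Z,\tau,L_{j-1}\times\mathbb R)$ reduces to the purely spatial Euclidean distance $d(Z,L_{j-1})$, so $B_j$ lies in the spatial $A^{-1}r$-slab around $L_{j-1}$ intersected with a parabolic cube $C_{cr}(X_Q,t_Q)\supset Q$ provided by \eqref{cube-ball}. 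The inductive step reduces to showing $\sigma(B_j)<\sigma(Q)$, from which any point of $Q\setminus B_j$ can serve as $(Z_j,\tau_j)$.

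To estimate $\sigma(B_j)$, I will cover the slab-in-cube by parabolic cubes of side $A^{-1}r$ centered on $\Sigma$. Along the $j-1$ spatial directions spanning $L_{j-1}$ one needs $\lesssim A^{j-1}$ cubes, the $n-(j-1)$ perpendicular spatial directions are each covered by a single layer since the slab has width $A^{-1}r$, and the time direction requires $\lesssim A^2$ cubes because the time extent of $C_{cr}(X_Q,t_Q)$ is of order $r^2$. Altogether this yields $\lesssim A^{j+1}$ parabolic cubes, each of $\sigma$-mass at most $M(A^{-1}r)^d$ by the parabolic ADR hypothesis. Hence
\begin{equation*}
\sigma(B_j) \,\lesssim\, A^{j+1}\cdot A^{-d}r^d \,=\, A^{-(n-j)} r^d,
\end{equation*}
with implicit constants depending on $n$ and $M$. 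Combined with the ADR lower bound $\sigma(Q)\gtrsim r^d$, this gives $\sigma(B_j)\le\tfrac12\sigma(Q)$ for every $j\le n-1$ as soon as $A=A(n,M)$ is taken sufficiently large, closing the inductive step in that range.

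The hardest step is the final iteration $j=n$, where the covering count saturates: the estimate degenerates to $\sigma(B_n)\lesssim r^d\sim\sigma(Q)$ with no gain from $A$. Closing this case requires additional input---essentially, a quantitative statement that on the scale of $Q$ the set $\Sigma$ cannot concentrate spatially within a single $(n-1)$-dimensional plane---and this is the principal obstacle I anticipate. My intended resolution is to refine the slab covering by exploiting the narrowness of the slab in the normal spatial direction against the definition of the parabolic Hausdorff $(n+1)$-measure, which should recover an extra factor of $A^{-1}$; alternatively, a normal-family extraction and blow-up argument that uses the precise dyadic cube structure in the p-ADR setting should furnish the needed non-degeneracy. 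Finally, the version with $Q$ replaced by $\lambda Q$, $\lambda\ge 1$, is immediate by running the identical construction on $\lambda Q$, the only change being that the covering and ADR constants acquire a mild dependence on $\lambda$ that is absorbed into $A=A(n,M,\lambda)$.
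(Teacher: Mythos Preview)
Your argument is exactly the paper's: greedy induction, contradiction by a covering count. Your cube count $A^{j+1}$ coincides with the paper's $A^{j-1}\cdot A^2$, and your bound $\sigma(B_j)\lesssim A^{j-n}r^d$ is precisely what the paper's inequality $1\lesssim A^{j-1}A^2A^{-(n+1)}$ encodes. So for $j\le n-1$ your proof and the paper's are the same.

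Your worry about $j=n$ is legitimate, and you should not try to repair it. The paper's own proof does not treat $j=n$ either: it explicitly writes ``which again is impossible, as $j<n$'' and then stops. In fact the $j=n$ assertion is false under the hypothesis of mere p-ADR. Take $\Sigma=P_x\times\mathbb R$, a $t$-independent hyperplane with $P_x\subset\mathbb R^n$ an $(n-1)$-plane; this set is p-ADR. Every point $(Z,\tau)\in\Sigma$ has $Z\in P_x$, so the spatial span $L_{n-1}$ of $Z_0,\dots,Z_{n-1}$ is contained in (indeed equals) $P_x$, and no $(Z_n,\tau_n)\in\Sigma$ can satisfy $d_p(Z_n,\tau_n,L_{n-1}\times\mathbb R)>0$. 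Neither of your suggested workarounds can succeed, because the statement itself fails at $j=n$.

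The resolution is that the range $i=1,\dots,n$ in the lemma is an off-by-one slip; the intended range is $i=1,\dots,n-1$, yielding $n$ points $Z_0,\dots,Z_{n-1}$ whose spatial components span a nondegenerate $(n-1)$-plane $L_{n-1}$. That is all the paper ever uses: in the applications one forms $L_{n-1}$ from $Z_0,\dots,Z_{n-1}$, sets $P=L_{n-1}\times\mathbb R$, and uses that these points sit at pairwise parabolic distance $\gtrsim\diam(Q)$. Your proof is complete for the corrected range; simply drop the final paragraph about $j=n$.
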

\begin{proof} We only prove the result for $Q$ as the same proof works for any dilate $\lambda Q$ of $Q$, $\lambda\geq 1$.
 We will prove the lemma by induction on $j$ for $j=1,...,n$.  Let $L_0:=\{Z_0\}$. Then $L_0$ is a $0$-dimensional plane.
 Assume that
$d_p(Z,\tau,L_{0}\times \mathbb R)\leq A^{-1}\diam(Q)$ for all $(Z,\tau)\in Q$. Then $Q$ can be covered by roughly $A^2$ cubes of size $2A^{-1}\diam(Q)$. Hence
$$(\diam(Q))^{n+1}\lesssim\sigma(Q)\lesssim A^2(A^{-1}\diam(Q))^{n+1}$$
which is impossible if $A$ is large enough. Hence there exists a point $(Z_1,\tau_1)\in Q$ such that
$d_p(Z_1,\tau_1,L_{0}\times R)\geq A^{-1}\diam(Q)$ and the conclusion is true for $j=1$ and we let $L_1$ be the linear span of $Z_0$ and $Z_1$. Let  $1\leq j<n-1$ and suppose that we have
found  points $(Z_i,\tau_i)\in Q$ for $i=1,...,j-1$ as stated and that $L_{j-1}$ is the (spatial) $(j-1)$-dimensional plane spanned by $Z_0,Z_1,...,Z_{j-1}$. Assume that
$d_p(Z,\tau,L_{j-1}\times \mathbb R)\leq A^{-1}\diam(Q)$ for all $(Z,\tau)\in Q$. Then $Q$ can be covered by roughly $A^{j-1}A^2$ cubes of size $2A^{-1}\diam(Q)$ and, again using that $\Sigma$ has the parabolic ADR property, we see that
$$1\lesssim A^{j-1}A^2A^{-n-1}$$
which again is impossible, as $j<n$, if $A=A(n,M)\gg 1$ is large enough. Hence there exists $(Z_j,\tau_j)\in Q$ such that
$d_p(Z_j,\tau_j,L_{j-1}\times \mathbb R)\geq A^{-1}\diam(Q)$ and we let $L_j$ be the  (spatial) $j$-dimensional plane spanned by $Z_0,Z_1,...,Z_{j}$. The lemma now follows from the induction hypothesis.
\end{proof}

\begin{lemma} \label{aux2} Let $\Sigma \subset \mathbb{R}^{n+1}$ be a parabolic Ahlfors-David regular set with dyadic cubes $\mathbb{D}(\Sigma)$. Let $Q\in \mathbb D=\mathbb D(\Sigma)$ and assume that $P_1$ and $P_2$ are two time-independent hyperplanes such that $d_p(X,t,P_i)\leq\epsilon \diam(Q)$ for all
$(X,t)\in Q$ and for $i\in\{1,2\}$. Then
\begin{align}\label{a3}
(i)&\ d_p(Y,s,P_2)\lesssim \epsilon(\diam(Q)+d_p(Y,s,Q)),\ (Y,s)\in P_1,\notag\\
(ii)&\ d_p(Y,s,P_1)\lesssim \epsilon(\diam(Q)+d_p(Y,s,Q)),\ (Y,s)\in P_2.
\end{align}
In particular, the angle between $P_1$ and $P_2$ is bounded by $\lesssim \epsilon$.  

Moreover, \eqref{a3} continues to hold (with slightly different implicit constants) with $Q$ replaced by any 
``surface ball" $\Delta = \Delta_r(X,t) = C_r(X,t)\cap\Sigma$,  where $(X,t) \in \Sigma$, provided that
$d_p(X,t,P_i)\leq\epsilon \diam(\Delta)$ for all
$(X,t)\in 2\Delta$ and for each $i\in\{1,2\}$. 
\end{lemma}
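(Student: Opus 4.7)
The plan is to exploit the time-independent structure of $P_1$ and $P_2$ to reduce the problem to one about spatial affine hyperplanes in $\mathbb{R}^n$, and then use Lemma \ref{aux1} in combination with a quantitative linear-algebra step to deduce closeness of the planes themselves from closeness of $n$ well-spread points. First I would write $P_i=P_{i,x}\times\mathbb{R}$, where $P_{i,x}\subset\mathbb{R}^n$ is the spatial affine hyperplane cut out by $\vec N_i\cdot X=c_i$, with $\vec N_i\in\mathbb{R}^n$ a unit normal. Because $P_i$ contains a full line in the time direction, $d_p((X,t),P_i)=|\vec N_i\cdot X-c_i|$, so the hypothesis becomes $|\vec N_i\cdot Z-c_i|\leq\epsilon\,\diam(Q)$ for every $(Z,\tau)\in Q$ and $i\in\{1,2\}$.

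Next I would invoke Lemma \ref{aux1} to produce points $(Z_0,\tau_0),\dots,(Z_{n-1},\tau_{n-1})\in Q$ whose spatial parts enjoy the separation property stated there. Setting $\vec v_j:=Z_j-Z_0$ for $j=1,\dots,n-1$, Lemma \ref{aux1} gives
\[
\dist\!\big(\vec v_j,\,\mathrm{span}(\vec v_1,\dots,\vec v_{j-1})\big)\geq A^{-1}\diam(Q),
\]
while $|\vec v_j|\leq\diam(Q)$ trivially. Subtracting the two bounds $|\vec N_i\cdot Z_j-c_i|\leq\epsilon\diam(Q)$ and $|\vec N_i\cdot Z_0-c_i|\leq\epsilon\diam(Q)$ then yields $|\vec N_i\cdot\vec v_j|\leq 2\epsilon\diam(Q)$ for all admissible indices.

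The technical heart of the argument, which I expect to be the main obstacle, is a quantitative linear-algebra step. Using Gram--Schmidt on $\{\vec v_j\}_{j=1}^{n-1}$ I would build an orthonormal basis $\{\vec u_k\}_{k=1}^{n-1}$ of $V:=\mathrm{span}(\vec v_1,\dots,\vec v_{n-1})$ so that each $\vec u_k=\sum_{l\leq k}\tilde c_{kl}\vec v_l$ with $|\tilde c_{kl}|\leq C(n,M)/\diam(Q)$; the essential input is that the triangular change-of-basis matrix has diagonal entries comparable to $A^{-1}\diam(Q)$, so by an induction or Cramer's rule its inverse has the advertised size. Therefore $|\vec N_i\cdot\vec u_k|\lesssim\epsilon$ for each $k$, which means that both unit vectors $\vec N_1,\vec N_2$ have component of size $O(\epsilon)$ inside the hyperplane $V\subset\mathbb{R}^n$. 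Since $V$ is $(n-1)$-dimensional it possesses a unique unit normal $\vec N_V$, and after appropriately orienting $\vec N_2$ (which does not change $P_2$) we conclude $|\vec N_1-\vec N_2|\lesssim\epsilon$, giving the claimed angle bound.

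Finally I would derive (i) and (ii). For $(Y,s)\in P_1$ the identity $\vec N_1\cdot Y=c_1$ gives
\[
d_p(Y,s,P_2)=|\vec N_2\cdot Y-c_2|=\big|(\vec N_2-\vec N_1)\cdot Y+(c_1-c_2)\big|.
\]
Choosing $(Y_*,t_*)\in Q$ nearest to $(Y,s)$, so $|Y-Y_*|\leq d_p(Y,s,Q)$, and bounding $|(\vec N_2-\vec N_1)\cdot Y_*-(c_2-c_1)|\leq 2\epsilon\diam(Q)$ via the hypothesis at $Y_*$, one obtains
\[
d_p(Y,s,P_2)\leq|\vec N_2-\vec N_1|\,d_p(Y,s,Q)+2\epsilon\diam(Q)\lesssim\epsilon\big(d_p(Y,s,Q)+\diam(Q)\big),
\]
which is (i); (ii) is symmetric. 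For the surface-ball extension, I would select a dyadic cube $Q'$ inside $2\Delta$ with $\diam(Q')\sim\diam(\Delta)$, verify the hypothesis of Lemma \ref{aux1} on $Q'$ (the assumption on $2\Delta$ clearly implies the analogue on $Q'$ up to multiplicative constants), and rerun the argument verbatim.
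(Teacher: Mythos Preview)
Your argument is correct. Both you and the paper begin by invoking Lemma~\ref{aux1} to produce $n$ well-spread points $(Z_0,\tau_0),\dots,(Z_{n-1},\tau_{n-1})\in Q$, but the executions then diverge. The paper introduces the auxiliary $t$-independent plane $P=L_{n-1}\times\mathbb{R}$ passing through the $Z_i$ and argues geometrically, bounding $d_p(Y,s,P_2)$ for $(Y,s)\in P_1$ by passing through $P$; the entire proof is four displayed lines. You instead work directly with the unit normals $\vec N_1,\vec N_2$, run Gram--Schmidt on the difference vectors $\vec v_j=Z_j-Z_0$ to quantify that both normals are $O(\epsilon)$-close to the normal of $V=\mathrm{span}(\vec v_j)$, and then finish with the explicit formula $d_p(Y,s,P_2)=|\vec N_2\cdot Y-c_2|$. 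Your approach is more algebraic and considerably more detailed; the paper's is terse to the point that the appearance of the factor $\epsilon$ in the last line of its display requires the reader to supply the missing step (namely, that $(Y,s)\in P_1$ together with $d_p(Z_i,\tau_i,P_1)\le\epsilon\diam Q$ forces $d_p(Y,s,P)\lesssim\epsilon\max_id_p(Y,s,Z_i,\tau_i)$, not merely $\lesssim\max_id_p(Y,s,Z_i,\tau_i)$). For the surface-ball version, the paper instead covers $\Delta$ by boundedly many dyadic cubes $Q_j\subset 2\Delta$ with $\diam Q_j\approx r$ and applies the cube case to each; your reduction to a single $Q'\subset 2\Delta$ achieves the same thing.
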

\begin{proof}  It is enough to prove the result for a cube $Q$: indeed, given a surface ball $\Delta=\Delta_r$ for 
which $d_p(X,t,P_i)\leq\epsilon \diam(\Delta)$ for all
$(X,t)\in 2\Delta$ and for each $i\in\{1,2\}$, we may cover $\Delta$
by a bounded number of disjoint cubes $Q_j$, with $\diam(Q_j) \approx r$, such that
$\cup_j Q_j \subset 2\Delta$, and observe that \eqref{a3}
(with $\epsilon$ replaced by $C\epsilon$) may be applied to each $Q_j$.
 
 Fix now a dyadic cube $Q\subset \Sigma$, satsifying the hypotheses of the lemma.
 Let $(Z_i,\tau_i)\in Q$ for $i=0,1,...,n$, be as in the statement of Lemma \ref{aux1} and let
$L_{Q}=L_{n-1}$  be the spatial $(n-1)$-dimensional plane which passes through $Z_0,Z_1,...,Z_{n-1}$. Let $P:=L_{Q}\times\mathbb R$.  Consider
$(Y,s)\in P_1$. Then
\begin{align}
d_p(Y,s,P_2)&\leq d_p(Y,s,P)+d_p(P,P_2)\notag\\
&\lesssim \max_{i\in \{0,...,n\}} d_p(Y,s,Z_i,\tau_i)+ \max_{i\in \{0,...,n\}} d_p(Z_i,\tau_i,P_2)\notag\\
&\lesssim \epsilon\diam(Q)+\max_{i\in \{0,...,n\}} d_p(Y,s,Z_i,\tau_i)\notag\\
&\lesssim \epsilon(\diam(Q)+d_p(Y,s,Q)).\end{align}
This proves the lemma.
\end{proof}

\section{Slice-wise vs. Parabolic Hausdorff Measure}\label{slicevspara.sect}

In this appendix we prove the (non-trivial) assertions made in Remark \ref{r-measures}. Recall $\mu$ is the (global) slice-wise measure defined as
\[\mu(E) = \int_{\mathbb{R}} \H^{n-1}(E_t) \, d\H^1(t),\]
where for $t \in \mathbb{R}$, $E_t := E \cap (\mathbb{R}^n \times \{t\})$.

The following proposition proves Remark \ref{r-measures}(vi).
\begin{proposition}\label{mulepara.prop}
There exists a constant $c(n)$ such that $\mu \le c(n)\H^{n+1}_p$.
\end{proposition}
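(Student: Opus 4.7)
The plan is to unwind the definition of $\mu$ into an iterated integral, reduce to controlling the restricted Hausdorff premeasure $\H^{n-1}_\eta$ on each time slice, and then use a single parabolic cover to produce matching spatial covers of the slices. Specifically, Tonelli's theorem applied to \eqref{slicewise.def} gives $\mu(E) = \int_{\mathbb R} \H^{n-1}(E_t)\,\d t$, where $E_t := \{X\in \rn : (X,t)\in E\}$ and $\H^{n-1}$ is the standard Euclidean $(n-1)$-dimensional Hausdorff measure on $\rn$. Writing $\H^{n-1}(E_t) = \lim_{\eta\to 0^+} \H^{n-1}_\eta(E_t)$ and invoking monotone convergence in $\eta$, the statement $\mu \le c(n)\H^{n+1}_p$ reduces to showing that for every $\eta>0$,
\[
\int_{\mathbb R} \H^{n-1}_\eta(E_t)\,\d t \;\le\; \H^{n+1}_p(E)\,.
\]

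The key geometric fact is that for any set $A\subseteq \ree$: (i) the Euclidean diameter of the slice $A_t\subseteq \rn$ is bounded by $\diam(A)$, since $(X,t),(Y,t)\in A$ implies $|X-Y| = d_p(X,t,Y,t)\le \diam(A)$; and (ii) the Lebesgue measure of the time-projection $\{t : A_t\neq \emptyset\}$ is bounded by $\diam(A)^2$, since $(X,t),(Y,s)\in A$ forces $|t-s|^{1/2}\le \diam(A)$. To prove the displayed inequality, fix $\epsilon>0$ and choose a cover $E\subseteq \bigcup_j E_j$ with (parabolic) $\diam(E_j)\le \eta$ and $\sum_j \diam(E_j)^{n+1} \le \H^{n+1}_{p,\eta}(E)+\epsilon \le \H^{n+1}_p(E)+\epsilon$. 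For each $t$, the sets $\{(E_j)_t\}_j$ cover $E_t$ and, by (i), have Euclidean diameters at most $\diam(E_j)\le \eta$, so they form an admissible family in the definition of $\H^{n-1}_\eta(E_t)$; hence
\[
\H^{n-1}_\eta(E_t) \;\le\; \sum_{j:\,(E_j)_t\ne\emptyset} \diam(E_j)^{n-1}\,.
\]
Integrating in $t$ and applying Tonelli together with (ii) yields
\[
\int_{\mathbb R}\H^{n-1}_\eta(E_t)\,\d t \;\le\; \sum_j \diam(E_j)^{n-1}\cdot \diam(E_j)^{2} \;=\; \sum_j \diam(E_j)^{n+1} \;\le\; \H^{n+1}_p(E)+\epsilon\,.
\]
Sending $\epsilon\to 0$, then $\eta\to 0^+$, finishes the proof (in fact with constant $c(n)=1$).

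There is no real obstacle here; the only step one must get right is conceptual. One is tempted to cover $E$ by parabolic balls or cubes and estimate $\mu$ of each covering set directly, but this fails because the time-slice of a parabolic ball is a full $n$-dimensional spatial cube, whose $\H^{n-1}$-measure is infinite. The trick is instead to use the slices $(E_j)_t$ themselves as admissible sets in the Hausdorff premeasure $\H^{n-1}_\eta(E_t)$, which only requires a bound on their Euclidean diameter. Once this point is recognized, observation (ii) automatically converts the exponent $n-1$ coming from the spatial diameter into the exponent $n+1$ needed to match $\H^{n+1}_p(E)$. (Any minor measurability issue in the map $t\mapsto \H^{n-1}_\eta(E_t)$ is harmless: the argument may be read as an outer-integral estimate.)
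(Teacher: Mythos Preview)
Your proof is correct and follows essentially the same strategy as the paper's: cover $E$ by sets of small parabolic diameter, observe that their time-slices give admissible covers of $E_t$ for $\H^{n-1}_\eta$, integrate in $t$, and pass to the limit via monotone convergence. The only difference is that the paper first replaces an arbitrary near-optimal cover by parabolic cubes $Q_i = Q_i' \times I_i$ (picking up dimensional constants $2^{n+1}$ and $(\sqrt{n})^{n-1}$ in the process), whereas you work directly with the covering sets $E_j$ themselves and exploit the two inequalities $\diam_{\mathrm{Eucl}}((E_j)_t) \le \diam(E_j)$ and $|\{t:(E_j)_t\neq\emptyset\}|\le \diam(E_j)^2$. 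Your route is slightly cleaner and yields $c(n)=1$ (modulo any normalizing constant in the definition of $\H^{n-1}$), but the underlying idea is identical.
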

\begin{proof}
In the proof, for any parabolic cube\footnote{Here we are taking an `honest' parabolic cube, different from the $C_r(X,t)$ defined above. This means $Q = \{(X,t): |X_j - X^*_j| < \ell/2, |t - t^*| < \ell^2/2\}$, where $(X^*,t^*)$ is the center of the cube and $\ell$ is the side length.} $Q$, we will write $Q= Q' \times I$, where $I$ is an interval, so that the side length of $Q'$, $\ell(Q')$, and the length of $I$, $\ell(I)$, satisfy $\ell(I)^2 = \ell(Q')$ and we let $\ell(Q) := \ell(Q')$. Suppose that $E \subset \mathbb{R}^{n+1}$, a Borel set, is such that $\H^{n+1}_p(E) < \infty$ (otherwise there is nothing to prove).
Recall that $H^{n+1}_{p, \delta}(E)$ is a decreasing function in $\delta$. Let $\delta > 0$ be arbitrary.  By definition of $\H^{n+1}_{p}(E)$ there exists a countable collection of cubes $Q_i$ such that
\[\sum_{i} \ell(Q_i)^{n+1} \le 2^{n+1} \H^{n+1}_{p}(E)  + \delta, \quad E \subseteq \cup_i Q_i \quad \text{ and } \quad  \diam(Q_i) \le \delta.\]
To see this, we use a covering $\{E_i\}$ which nearly minimizes the quantity in the definition of $H^{n+1}_{p,\delta'}(E)$, where $(2\sqrt{n} +  \sqrt{2})\delta'= \delta$, then for each $i$ we take $Q_i = C_{r_i}(X_i,t_i)$ with $\ell(Q_i)/2 = r_i = \diam(E_i)$ and $(X_i,t_i)$ an arbitrary point in $E_i$.

Let $\mathcal{I}(t) = \{i: t \in I_i\}$.  Then as $E \subset Q_i$ we have $E_t \subseteq \cup_{i \in \mathcal{I}(t)} Q'_i$ and $\diam(Q'_i) \le \delta$. Therefore
\[\H_\delta^{n-1}(E_t) \le c'(n) \sum_{i \in \mathcal{I}(t)} \ell(Q_i')^{n-1} = c'(n) \sum_{i \in \mathcal{I}(t)} \ell(Q_i)^{n-1},\]
where $c'(n) := (\sqrt{n})^{n-1}$.  Hence
\begin{align*}
\int_{\mathbb{R}} \H^{n-1}_\delta(E_t) \, d\H^1(t) &\le c'(n) \int_{\mathbb{R}}  \sum_{i \in \mathcal{I}(t)} \ell(Q_i)^{n-1} \, d\H^1(t) \\
& \le c'(n)\sum_i  \int_{I_i}  \ell(Q_i)^{n-1} \, d\H^1(t) = c'(n) \sum_i   \ell(Q_i)^{n+1} \\
& \le c'(n) 2^{n+1} \H^{n+1}_{p}(E)  + c'(n) \delta.
\end{align*}
The result now follows from the monotone convergence theorem as $\H^{n-1}_\delta(E_t)$ increases as $\delta$ decreases.
\end{proof}

The following proposition proves Remark \ref{r-measures}(i) for the measure $\sigma^{\bf s}$.
\begin{proposition}\label{muhpsim.prop}
Let $E \subset \mathbb{R}^{n+1}$ be closed. There exists $c(n) > 0$ such that the following holds. If there is a constant $c$ such that
\[c^{-1}r^{n+1} \le  \mu(C_r(X,t) \cap E) \le cr^{n+1}, \quad \forall (X,t) \in E, r > 0,\]
then
\[c(n)^{-1} c^{-1}r^{n+1} \le  \H^{n+1}_p(C_r(X,t) \cap E) \le c(n)c r^{n+1}, \quad \forall (X,t) \in E, r > 0.\]
\end{proposition}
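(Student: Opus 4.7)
\medskip
\noindent\emph{Proof proposal.}
The lower bound is immediate from Proposition \ref{mulepara.prop}. That proposition gives $\mu \le c(n)\, \H^{n+1}_p$ as Borel measures; applying it to $C_r(X,t)\cap E$ and invoking the hypothesis yields
\[
\H^{n+1}_p(C_r(X,t) \cap E) \;\ge\; c(n)^{-1} \mu(C_r(X,t) \cap E) \;\ge\; c(n)^{-1} c^{-1} r^{n+1}.
\]

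For the upper bound, my plan is a standard Vitali--type covering argument, implemented in the parabolic metric $d_p$. Fix a small $\delta \in (0,r)$, and for each $(Y,s) \in C_r(X,t)\cap E$ consider the parabolic ball $C_{\delta/2}(Y,s)$. By the $5r$-covering lemma applied in the metric space $(\mathbb R^{n+1}, d_p)$ (legitimate because $d_p$ is a genuine metric and our parabolic cubes $C_\rho(\cdot)$ are comparable to $d_p$-balls of radius $\rho$ up to purely dimensional constants), I will extract a countable, pairwise disjoint subfamily $\{C_{\delta/2}(Y_i,s_i)\}_i$ with centers in $C_r(X,t)\cap E$, whose dilates $\{C_{c(n)\delta}(Y_i,s_i)\}_i$ cover $C_r(X,t)\cap E$.

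To bound the cardinality of this family, I will use the ADR hypothesis on $\mu$: the lower bound gives $\mu(C_{\delta/2}(Y_i,s_i)\cap E) \ge c^{-1}(\delta/2)^{n+1}$ for each $i$, while disjointness and the inclusion $C_{\delta/2}(Y_i,s_i) \subset C_{2r}(X,t)$ (valid since $\delta \le r$) together with the upper ADR bound yield
\[
\sum_i \mu(C_{\delta/2}(Y_i,s_i)\cap E) \;\le\; \mu(C_{2r}(X,t)\cap E) \;\le\; c\,(2r)^{n+1}.
\]
Dividing, the family has cardinality at most $c(n)\, c^2 (r/\delta)^{n+1}$. Since each covering cube $C_{c(n)\delta}(Y_i,s_i)$ has parabolic diameter $\lesssim \delta$, the definition of $\H^{n+1}_{p,\delta'}$ gives, with $\delta' = c'(n)\delta$,
\[
\H^{n+1}_{p,\delta'}(C_r(X,t)\cap E) \;\lesssim\; c(n)\, c^2 (r/\delta)^{n+1} \cdot \delta^{n+1} \;=\; c(n)\, c^2\, r^{n+1}.
\]
Letting $\delta \to 0$ yields the upper bound. (The $c^2$ may be absorbed into the stated $c(n)c$, since $c\ge 1$ for any non-trivial ADR constant; alternatively one may simply restate the conclusion with $c^2$ in place of $c$, which is immaterial for all subsequent applications.)

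The main technical hurdle is confirming the parabolic Vitali/$5r$-covering lemma in the precise form needed; this is routine given that parabolic cubes and $d_p$-balls are equivalent up to dimensional constants. The only other subtlety is keeping the disjoint subfamily inside a fixed enlargement of $C_r(X,t)$ (so that the upper ADR bound is applicable), which is immediate as soon as $\delta \le r$; the case $\delta > r$ is irrelevant since we ultimately send $\delta \to 0$.
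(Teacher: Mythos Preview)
Your proposal is correct and takes essentially the same approach as the paper: the lower bound via Proposition \ref{mulepara.prop}, and the upper bound via a Vitali/$5r$-covering in the parabolic metric combined with the two-sided $\mu$-ADR hypothesis. The paper chains the inequalities directly (converting $\sum_i \diam^{n+1}$ to $\sum_i \mu$ via the lower ADR bound, then using disjointness and the upper ADR bound) rather than first bounding the cardinality, but this is purely cosmetic, and its argument likewise produces a factor of $c^2$ rather than $c$.
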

\begin{proof}
The lower bound follows directly from the previous proposition. To prove the other inequality fix $(X,t) \in E, r > 0$, let $\delta \in (0,r)$ be arbitrary and let $r_\delta$ be such that $\diam(C_{5r_\delta}(0)) = \delta$. Clearly,
\[C_r(X,t) \cap E \subseteq \bigcup_{(Y,s) \in C_r(X,t) \cap E}C_{r_\delta}(Y,s) \cap E.\]
By the 5r-covering lemma (see e.g. \cite[Theorem 2.1]{Mat}) there exists a countable collection of $(Y_i,s_i)\in E \cap C_r(X,t)$  such that the cubes $C_{r_\delta}(Y_i,s_i)$ are disjoint and
\[C_r(X,t) \cap E \subset \cup_i C_{5r_\delta}(Y_i,s_i) \cap E.\]
It follows that
\begin{align*}
 \H^{n+1}_{p,\delta}(C_r(X,t) \cap E)&\lesssim \sum_i \diam(C_{5r_\delta}(Y_i,s_i))^{n+1} \lesssim c\sum_i \mu(C_{r_\delta}(Y_i,s_i) \cap E)^{n+1} \\
&\lesssim \mu(C_{2r}(X,t) \cap E) \lesssim c r^{n+1},
\end{align*}
where now $\lesssim$ means that constants only depend on $n$, and where we have used that $\diam(C_{r_\delta}(Y_i,s_i)) < \delta < r$ in the second-to-last inequality.
Letting $\delta \to 0^+$ yields the upper bound.
\end{proof}

On the other hand, $\H^{n+1}_p \not \ll \mu$, even when the set is p-ADR with respect to $\H^{n+1}_p$, as the following example shows.
\begin{example}
Let $E= [C_{1-1/2n}]^n \times C_{3/4}$, where $C_s$ denotes a (generalized) $s$-dimensional Cantor-type set in $\mathbb R$. Then $E$ is p-ADR with respect to $\H^{n+1}_p$, but $\mu(E) = 0$. The fact that $\mu(E) = 0$ follows from the fact that $\H^1(C_{3/4}) = 0$. Moreover, one can verify that $E$ is p-ADR with respect to the measure $\H^{n-1/2} \times \H^{3/4}$ and hence p-ADR with respect to $\H^{n+1}_p$.
\end{example}

We will require the following definition.

\begin{definition}
Let $E\subset \mathbb{R}^{n+1}$ be a p-ADR set (with respect to $\H^{n+1}_p$) and let $\mathcal{S}$ be a collection of p-ADR sets, with uniform control on the p-ADR constant. We say $E$ is big pieces of $\mathcal{S}$, written $E$ is $BP(\mathcal{S})$, if there exists $\theta > 0$ such that for every $(X,t) \in E$ and $r \in (0,\diam(E))$ there exists $\Gamma \in \mathcal{S}$ with
\[\H^{n+1}_p(E \cap C_r(X,t)  \cap \Gamma) \ge \theta \H^{n+1}_p(E \cap C_r(X,t)).\]
We say $E$ is big pieces squared of $\mathcal{S}$, written $E$ is $BP^2(\mathcal{S})$ if there exists a constant $M$ such that $E$ is $BP(BP(\mathcal{S},M))$, where $BP(\mathcal{S},M)$ is the collection of all p-ADR sets with p-ADR constant less than $M$ which are $BP(\mathcal{S})$.
\end{definition}

\begin{proposition}\label{bpmuadr.prop}
Let $\mathcal{S}$ be a collection of closed subsets of $\mathbb{R}^{n+1}$, which are uniformly p-ADR with respect to $\mu$, that is, there exists $M > 1$ such that for all $\Gamma \in \mathcal{S}$ it holds
\[M^{-1} r^{n+1} \le \mu(\Gamma \cap C_r(X,t)) \le M r^{n+1}, \quad \forall (X,t) \in \Gamma, r > 0.\]
If $E$ is p-ADR (with respect to $\H^{n+1}_p$) and $E$ is $BP(\mathcal{S})$, then $E$ is p-ADR with respect to $\mu$ with constant depending only on $M$, $n$, the ADR constant of $E$, and the constant $\theta$ in the definition of $BP(\mathcal{S})$. In particular, $\mu|_E \approx \H^{n+1}_p|_E$, with implicit constant depending only on $M$, $n$, the p-ADR constant of $E$, and the constant $\theta$ in the definition of $BP(\mathcal{S})$.
\end{proposition}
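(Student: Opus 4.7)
The plan is to reduce the statement to two ingredients: the general inequality $\mu \leq c(n)\H^{n+1}_p$ from Proposition \ref{mulepara.prop}, and the classical fact that two doubling measures which have comparable mass on all balls of a metric space are comparable as measures. The upper bound $\mu(E \cap C_r(X,t)) \lesssim r^{n+1}$ comes for free from Proposition \ref{mulepara.prop} together with the $\H^{n+1}_p$-p-ADR hypothesis on $E$, so the substance of the proposition is the lower bound.

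The key intermediate step will be to show that for every $\Gamma \in \mathcal{S}$, the restricted measures $\mu|_\Gamma$ and $\H^{n+1}_p|_\Gamma$ are equivalent as Borel measures with constants depending only on $M$ and $n$. By hypothesis $\Gamma$ is $\mu$-p-ADR, so Proposition \ref{muhpsim.prop} promotes this to $\Gamma$ being $\H^{n+1}_p$-p-ADR as well, and both measures are doubling on $\Gamma$ with respect to the parabolic metric. Since $\mu \leq c(n)\H^{n+1}_p$ globally, Radon--Nikodym supplies $f \in L^\infty(\H^{n+1}_p|_\Gamma)$ with $0 \leq f \leq c(n)$ and $d\mu|_\Gamma = f\,d\H^{n+1}_p|_\Gamma$. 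The Lebesgue differentiation theorem for doubling metric measure spaces then yields, at $\H^{n+1}_p$-a.e.\ point $(X,t) \in \Gamma$,
\[ f(X,t) \,=\, \lim_{r \to 0^+} \frac{\mu(\Gamma \cap C_r(X,t))}{\H^{n+1}_p(\Gamma \cap C_r(X,t))} \,\approx\, 1, \]
because both numerator and denominator are $\approx r^{n+1}$ uniformly in $\Gamma \in \mathcal{S}$ by the two p-ADR bounds. Hence $f$ is pinched between positive constants a.e.\ on $\Gamma$, giving the desired equivalence.

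Granted this key lemma, the lower bound on $E$ follows quickly from the $BP(\mathcal{S})$ hypothesis. Given $(X,t) \in E$ and $0 < r < \diam(E)$, I would pick $\Gamma \in \mathcal{S}$ as in the definition of $BP(\mathcal{S})$, so that
\[ \H^{n+1}_p(E \cap C_r(X,t) \cap \Gamma) \,\geq\, \theta \,\H^{n+1}_p(E \cap C_r(X,t)) \,\gtrsim\, \theta\, r^{n+1}, \]
using the $\H^{n+1}_p$-p-ADR property of $E$ at the last step. Since $E \cap C_r(X,t) \cap \Gamma$ is a Borel subset of $\Gamma$, the key lemma applied to this set gives
\[ \mu(E \cap C_r(X,t)) \,\geq\, \mu(E \cap C_r(X,t) \cap \Gamma) \,\gtrsim\, \H^{n+1}_p(E \cap C_r(X,t) \cap \Gamma) \,\gtrsim\, \theta\, r^{n+1}, \]
which combined with the easy upper bound establishes that $E$ is p-ADR with respect to $\mu$, with constants depending only on the claimed data.

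The ``in particular'' statement $\mu|_E \approx \H^{n+1}_p|_E$ is then obtained by re-running the Radon--Nikodym / Lebesgue differentiation argument of the second paragraph with $E$ (now known to be p-ADR with respect to both measures) in place of $\Gamma$. The main obstacle in the plan is the passage from ``ADR-on-balls'' comparability, which is essentially given by hypothesis, to genuine comparability of the measures on arbitrary Borel subsets; this is exactly the step that requires doubling-measure Lebesgue differentiation, and it is indispensable because the set $E \cap C_r(X,t) \cap \Gamma$ produced by the big-pieces condition is in general a rough Borel subset of $\Gamma$, not a parabolic ball.
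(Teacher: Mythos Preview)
Your proposal is correct and follows essentially the same route as the paper: upper bound via Proposition~\ref{mulepara.prop}, then the lower bound by choosing $\Gamma$ from the $BP(\mathcal{S})$ hypothesis and using that $\mu|_\Gamma \approx \H^{n+1}_p|_\Gamma$ on the Borel set $E\cap C_r(X,t)\cap\Gamma$. The one difference is that you spell out, via Radon--Nikodym and Lebesgue differentiation for doubling measures, why comparability on parabolic balls of $\Gamma$ upgrades to comparability on arbitrary Borel subsets of $\Gamma$; the paper simply asserts $\H^{n+1}_p|_\Gamma \approx \mu|_\Gamma$ as a consequence of both measures being p-ADR on $\Gamma$ and uses it without further comment, treating this upgrade as standard.
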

\begin{proof}
Let $(X,t) \in E$ and $r \in (0,\diam(E))$. By Proposition \ref{mulepara.prop}
\[\mu(C_r(X,t) \cap E) \le c(n) H_p^{n+1}(E \cap C_r(X,t) )\lesssim r^{n+1},\]
where the implicit constant depends on the p-ADR constant of $E$ and $n$. To prove the lower bound, we note that for $\Gamma \in \mathcal{S}$, $\H^{n+1}_p|_\Gamma \approx \mu|_\Gamma$, with implicit constant depending on $M$ since any set which is p-ADR with respect to $\mu$ is p-ADR with respect to $\H^{n+1}_p$ (by Proposition \ref{muhpsim.prop}). Then using that $E$ is p-ADR and $E$ is $BP(\mathcal{S})$ there exists $\Gamma \in \mathcal{S}$ such that
\begin{align*}
r^{n+1} \approx \H^{n+1}(E \cap C_r(X,t)) &\le \theta^{-1} \H^{n+1}(E \cap C_r(X,t) \cap \Gamma)
\\ & \approx_M \theta^{-1} \mu(E \cap C_r(X,t) \cap \Gamma)
\\ &\lesssim \theta^{-1} \mu(E \cap C_r(X,t)) .
\end{align*}
This proves the proposition. \end{proof}

The following corollary finishes the proof of the assertion in Remark \ref{r-measures}(v).
\begin{corollary}
If $E$ is parabolic UR then $\mu|_E \approx \H^{n+1}_p|_E$, that is, $\sigma^s \approx \sigma$. Here the implicit constants depend on dimension and the parabolic UR constants for $E$.\end{corollary}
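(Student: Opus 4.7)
The plan is to chain Proposition \ref{bpmuadr.prop} twice through the $BP^2$ characterization of parabolic uniform rectifiability. First I would invoke Theorem \ref{thrm.char}, the (a) $\Longleftrightarrow$ (d) equivalence, so that $E$ being parabolic UR yields that $E$ is big pieces squared of regular Lip(1,1/2) graphs. Thus there is a constant $M$ and a collection $\mathcal{T} := BP(\mathcal{G},M)$, where $\mathcal{G}$ denotes the class of regular Lip(1,1/2) graphs with uniform parameters $b_1,b_2$, such that every $\Gamma' \in \mathcal{T}$ is a p-ADR set (with respect to $\H^{n+1}_p$) of constant $\leq M$ with $\Gamma'$ big pieces of $\mathcal{G}$, and such that $E$ itself is $BP(\mathcal{T})$.

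Next I would verify that the family $\mathcal{G}$ is uniformly p-ADR with respect to the slice-wise measure $\mu$, with constant depending only on $n$ and $b_1$. Indeed, if $\Sigma_\psi$ is the graph of a Lip(1,1/2) function $\psi$ with $|\nabla_y \psi| \leq b_1$, then by \eqref{measure},
\begin{equation*}
d\mu|_{\Sigma_\psi}(y,\psi(y,s),s) \,=\, \sqrt{1+|\nabla_y \psi(y,s)|^2}\, dy\, ds \,\approx\, dy\, ds,
\end{equation*}
with implicit constants depending only on $n$ and $b_1$. Since $(y,s) \mapsto dy\,ds$ is exactly the $(n+1)$-dimensional parabolic Lebesgue measure on $\mathbb{R}^{n-1} \times \mathbb{R}$, a direct Fubini computation, together with the Lip(1,1/2) bound on $\psi$, shows that the pushforward of $\mu|_{\Sigma_\psi}$ under the projection to $\mathbb{R}^{n-1}\times\mathbb{R}$ is comparable to parabolic Lebesgue measure, so $\mu$ is p-ADR on $\Sigma_\psi$ with constant depending only on $n$ and $b_1$. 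Therefore $\mathcal{G}$ satisfies the hypothesis of Proposition \ref{bpmuadr.prop}.

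With this at hand I would apply Proposition \ref{bpmuadr.prop} to each $\Gamma' \in \mathcal{T}$: since $\Gamma'$ is p-ADR (with respect to $\H^{n+1}_p$) with constant $\leq M$ and is $BP(\mathcal{G})$ with a uniform parameter $\theta$, we deduce that $\Gamma'$ is p-ADR with respect to $\mu$, with constant depending only on $n$, $b_1$, $M$, and $\theta$, and $\mu|_{\Gamma'} \approx \H^{n+1}_p|_{\Gamma'}$. In particular, the whole collection $\mathcal{T}$ is uniformly p-ADR with respect to $\mu$.

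Finally, since $E$ is p-ADR with respect to $\H^{n+1}_p$ (being parabolic UR) and $E$ is $BP(\mathcal{T})$, a second application of Proposition \ref{bpmuadr.prop} (now with $\mathcal{T}$ playing the role of $\mathcal{S}$) gives that $E$ is p-ADR with respect to $\mu$, with all constants depending only on $n$ and the parabolic UR constants of $E$, and in particular $\mu|_E \approx \H^{n+1}_p|_E$, i.e.\ $\sigma^s \approx \sigma$, which is the claim. There is no real obstacle here once one has Theorem \ref{thrm.char} in hand; the only point to verify with some care is the book-keeping of constants through the two iterations, namely that the BP and p-ADR constants at the intermediate level $\mathcal{T}$ depend only on the parameters $(n,M,\Gamma)$ encoded in the p-UR hypothesis on $E$.
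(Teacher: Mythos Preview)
Your proposal is correct and follows essentially the same approach as the paper: both arguments pass from parabolic UR to $BP^2$ of (regular) Lip(1,1/2) graphs, observe that such graphs are uniformly p-ADR with respect to $\mu$, and then apply Proposition \ref{bpmuadr.prop} twice. The only cosmetic difference is that the paper cites Theorem \ref{main} together with \cite[Theorem 1.1]{BHHLN1} to obtain the $BP^2$ structure, whereas you invoke the packaged equivalence (a) $\Leftrightarrow$ (d) of Theorem \ref{thrm.char}; these amount to the same thing.
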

\begin{proof}
We have shown in Theorem \ref{main} that if $E$ is parabolic UR, then $E$ admits a corona decomposition with respect to regular Lip(1,1/2) graphs, with (uniform) control on the Lip(1,1/2) constant in terms of the parabolic UR constants for $E$. Then it follows from \cite[Theorem 1.1]{BHHLN1} that $E$ is $BP^2(\mathcal{S})$, where $\mathcal{S}$ is a collection of Lip(1,1/2) graphs with uniform control on the Lip(1,1/2) constant. In particular, $\mathcal{S}$ is a collection of sets which are uniformly p-ADR with respect to $\mu$. Applying Proposition \ref{bpmuadr.prop} twice yields the corollary.
\end{proof}

\end{document}